\author{I. Cravero, M. Semplice
\thanks{Correspondence to: Matteo Semplice -
Dipartimento di Matematica ``G. Peano'' -
Universit\`a di Torino -
Via C. Alberto, 10 -
10123 Torino (Italy). 
{matteo.semplice@unito.it}
}}
\date{}
\title{On the accuracy of WENO and CWENO reconstructions of third order on nonuniform meshes}
\newtheorem{theorem}{Theorem}
\newtheorem{remark}[theorem]{Remark}
\newtheorem{lemma}[theorem]{Lemma}
\newcommand{\ca}[1]{\overline{#1}}
\newcommand{\D}{\mathrm{d}}
\newcommand{\bigO}{\mathcal{O}}
\newcommand{\up}{u^{\prime}_j}
\newcommand{\us}{u^{\prime\prime}_j}
\newcommand{\ut}{u^{\prime\prime\prime}_j}
\newcommand{\uq}{u^{(4)}_j}
\newcommand{\sn}[2]{\ensuremath{#1\cdot10^{#2}}}
\newcommand{\DT}{\mathrm{\Delta}t}
\begin{document}

\maketitle
\begin{abstract}
Third order WENO and CWENO reconstruction are widespread high order reconstruction techniques for numerical schemes for hyperbolic conservation and balance laws. In their definition, there appears a small positive parameter, usually called $\epsilon$,  that was originally introduced in order to avoid a division by zero on constant states, but whose value  was later shown to affect the convergence properties of the schemes. Recently, two detailed studies of the role of this parameter, in the case of uniform meshes, were published. In this paper we extend their results to the case of finite volume schemes on non-uniform meshes, which is very important for h-adaptive schemes, showing the benefits of choosing $\epsilon$ as a function of the local mesh size $h_j$. In particular we show that choosing $\epsilon=h_j^2$ or $\epsilon=h_j$ is beneficial for the error and convergence order, studying on several non-uniform grids the effect of this choice on the reconstruction error, on fully discrete schemes for the linear transport equation, on the stability of the numerical schemes. Finally we compare the different choices for $\epsilon$ in the case of a well-balanced scheme for the Saint-Venant system for shallow water flows and in the case of an h-adaptive scheme for nonlinear systems of conservation laws and show numerical tests for a two-dimensional generalisation of the CWENO reconstruction on locally adapted meshes.

\paragraph{Keywords:} {WENO \and CWENO \and non-uniform mesh \and conservation and balance laws \and high-order methods}
\paragraph{MSC:} {65M06 \and 65M08}
\end{abstract}

\section{Introduction}\label{sec:intro}

Hyperbolic conservation and balance laws like
\begin{equation}\label{e:blaw}
u_t + \nabla \cdot f(u) = g(u,x)
\end{equation}
describe a wealth of phenomena, from gas dynamics, to magnetohydrodynamics, to shallow water flows, that are of much practical interest in fields like the study of industrial processes, simulation-based prototyping, forecasting of natural phenomena for the design of early-warning systems and passive defences against natural disasters, etc. Nonlinearities in the flux function $f(u)$ give rise to very complex solutions, whose efficient numerical approximation often requires Adaptive Mesh Refinement (AMR) techniques, so that one can use fine meshes only in specific regions of the flow, e.g. close to boundaries, shocks, contact discontinuities. 

In the field of hyperbolic conservation laws, AMR has been achieved by superimposing patches of uniform grids, thus representing and time-advancing the solution at the same location on multiple meshes, taking care to maintain conservativity at mesh interfaces \cite{BL98:amrclaw}. This approach undoubtedly has the advantage that one can employ the very well-known and well-studied numerical schemes for discretization on uniform meshes. Alternatively, one may consider a single mesh, that can be locally refined, giving rise to an unstructured mesh. In this respect, it is important to notice that finite-volume methods can be easily formulated also on nonconforming meshes, like quad-tree ones, that can present hanging nodes. This approach, albeit facilitating the grid management and enforcement of conservativity, requires the development of new discretization techniques, suitable for irregular meshes.

The fact that, in smooth regions of the flow, schemes of order three and above can compute accurate solutions on relatively coarse meshes is very important in AMR schemes, because a good adaptive strategy will be able to exploit this by leaving the mesh very coarse in smooth regions, and refining it only in a very small region around problematic features. For example, in \cite{CRS} a second and a third order adaptive scheme with the same error indicator are compared, showing that the second order one has to refine in a much larger region, giving rise to much bigger meshes.

Since a successful technique for mesh adaptivity in the case of hyperbolic systems is expected to continuously refine and coarsen the mesh at different locations in the computational domain, in our opinion it is important that discretization techniques do not to rely on precomputed quantities related to the local mesh geometry (see e.g. \cite{HuShu:1999}). A good balance between the high accuracy and the small stencil requirement are third order accurate schemes. In this work we thus concentrate on two third order reconstructions that employ very small stencils, for which the needed metric information on the neighbours may be gathered efficiently at every timestep and extend to the non-uniform case a technique that can squeeze out from these small stencils a lot of extra accuracy with respect to the traditional form of the discretization.

In order to fix the notation,  we consider systems of balance laws with a geometric source term of the form \eqref{e:blaw}
and we seek the solution on a domain $\Omega$, with suitable initial and boundary conditions. Although some two-dimensional applications will be shown in the numerical experiments at the end of the paper, the theory is developed in the one-dimensional case, in which the computational domain $\Omega$ is an interval, discretized with cells $\Omega_j=[x_{j-1/2}, x_{j+1/2}]$, such that $\cup_j \Omega_j=\Omega$. The amplitude of each cell is $h_j= x_{j+1/2}-x_{j-1/2}$, with the cell centre being $x_j =  (x_{j-1/2}+x_{j+1/2})/2$.

We consider semidiscrete finite volume schemes and denote with  
$$\ca{u}_j(t)=\frac1{h_j} \int_{\Omega_j} u(t,x) \, dx $$
the cell average of the exact solution in the cell $I_j$ at time $t$ and $\ca{U}_j(t)$ its numerical approximation. The semidiscrete numerical scheme can be written as
\begin{equation}\label{e:semischeme}
\frac{\D}{\D t}\ca{U}_j= - \frac{1}{h_j}\left( {F}_{j+1/2}- {F}_{j-1/2}\right) +  G_j(\ca{U},x).
\end{equation}
The numerical fluxes ${F}_{j+1/2}(t)$ should approximate $f(u(t,x_{j+1/2}))$ with suitable accuracy and are computed as a function of the so-called {\em boundary extrapolated data}, i.e.
\begin{equation}\label{e:fluxes}
{F}_{j+1/2}=\mathcal{F}(U_{j+1/2}^-,U_{j+1/2}^+),
\end{equation}
where $\mathcal{F}$ is a consistent and monotone numerical flux, evaluated on two estimates of the solution at the cell interface $U_{j+1/2}^{\pm}$, in turn obtained with a high order non oscillatory reconstruction. The function $\mathcal{F}$ may be an upwind flux, (local) Lax-Friedrichs, an approximate or exact Riemann solver, etc. Finally, $G_j$ is a consistently accurate, well balanced discretization of the cell average of the source term on the cell $\Omega_j$, that in general requires the reconstruction of the point values of the solutions also at inner points in the cell (see e.g. \cite{NatvigEtAl}).

One of the most successful high order reconstructions is the Weighted Essentially Non-Oscillatory (WENO), whose first efficient implementation was described in \cite{JiangShu:96}. It is based on the observation that one may recover the value of high order interpolating polynomials on a centred stencil as convex combination of the values of lower order ones that interpolate the function only in a substencil (see \cite{Gerolymos:12} for a comprehensive development of this idea). 
In particular, let $P_{j,\lambda}(x)$ candidate polynomial in the $j$-th cell. 
Typically, $P_{j,\lambda}(x)$ will interpolate, in the sense of cell averages, the data $\ca{U}$ in a stencil $\lambda\in\Lambda$, where $\Lambda$ is the set of stencil considered in the reconstruction. 

Next, depending on the particular reconstruction being performed, one has one or more sets of {\em optimal weights} $C_\lambda$ such that the linear combination $\sum_\lambda C_\lambda P_{j,\lambda}(x)$ has some desirable property, like coinciding with an higher order reconstruction at some point in the $j$-th cell.
For example, the WENO reconstruction is characterised by two sets of optimal weights $C^{\pm}_\lambda$ such that the combination of the evaluated polynomials $\sum_{\lambda\in\Lambda}C^{\pm}_\lambda P_\lambda(x_j\pm\tfrac{h}{2})$ matches the value of the central reconstruction evaluated at the boundaries. The CWENO reconstruction instead has only one set of optimal weights and computes the linear combination of the candidate polynomials $\sum_{\lambda\in\Lambda}C_\lambda P_\lambda(x)$ which can then be evaluated where needed.

The weights of this convex combination are named {\em linear weights} but the actual reconstruction is computed by a modified set of weights, the so-called {\em nonlinear weights}, that are designed to be close to the linear ones in regions of smoothness and close to zero if a discontinuity occurs in the corresponding substencil. In order to achieve this, one considers suitable smoothness indicators, like those of \cite{JiangShu:96} that defined
\begin{equation}
\label{eq:ShuInd}
I_{j,\lambda} = \sum_{r\geq 1} (h_j)^{2r-1} \int_{\Omega_j} \left[P^{(r)}_{j,\lambda}(x)\right]^2 \D x.
\end{equation}
Note that these indicators are bounded even if the $\lambda$-th stencil contain a discontinuity, while they approach $0$ if the solution is smooth in the stencil.
In order to perform the reconstruction, one forms the {\em nonlinear weights} $\omega_\lambda$, with the help of the regularity indicators as in
\begin{equation}
\label{eq:omega}
\widetilde{\omega}_\lambda = \frac{C_\lambda}{(\epsilon+I_{j,\lambda})^{\tau}},
\qquad
\omega_\lambda = \frac{\widetilde{\omega}_\lambda}{\sum_{\xi\in\Lambda} \widetilde{\omega}_\xi}
\end{equation}
and considers the linear combination $\sum_\lambda \omega_\lambda P_{j,\lambda}(x)$. The final result is a reconstruction that is close to some higher order one (dictated by the choice of the linear weights, often coinciding with the central one) in regions of smoothness and degrades smoothly to a one-sided non-oscillatory one close to discontinuities. Despite the absence of a formal proof of convergence for non- smooth solutions, this basic idea has been very successful and many details related to the very high order versions, the extension to higher dimensions, to non structured meshes, the existence and positivity of the linear weights, together with dozens of applications, have been studied over the years. A comprehensive review with lots of useful references may be found in \cite{Shu:2009:WENOreview}.

In order to obtain a fully discrete scheme, we apply a Strong Stability Preserving Runge-Kutta method (SSPRK) with Butcher's tableau $(A,b)$, obtaining the evolution equation for the cell averages
\begin{equation}
\label{eq:fullydiscrete}
\ca{U}_j^{n+1} = \ca{U}_j^{n} - \frac{\DT}{h_j} \sum_{i=1}^s b_i \left(F^{(i)}_{j+1/2}-F^{(i)}_{j-1/2}\right)
+ \DT \sum_{i=1}^s b_i G^{(i)}_j.
\end{equation}
Here $F^{(i)}_{j+1/2}=\mathcal{F}\big(U^{(i),-}_{j+1/2},U^{(i),+}_{j+1/2}\big)$ and the boundary extrapolated data 
$U^{(i),\pm}_{j+1/2}$ are computed from the stage values of the cell averages
\[
\ca{U}_j^{(i)} =
\ca{U}_j^{n} - \frac{\DT}{h_j} \sum_{k=1}^{i-1} a_{ik} \left(F^{(k)}_{j+1/2}-F^{(k)}_{j-1/2}\right)
+ \DT \sum_{k=1}^{i-1} a_{ik}G^{(k)}_j.
\]
We point out that the spatial reconstruction procedures and the well-balanced quadratures for the source term  must be applied for each stage value of the Runge-Kutta scheme.
In this paper we consider a uniform timestep over the whole grid. A local timestep, keeping a fixed CFL number over the grid, can be enforced using techniques from \cite{Kir:2003,CS:2007,PS:entropy}. The overall accuracy of the scheme is the smallest between the accuracy of the spatial reconstruction and that of scheme used for the time-evolution.

With regards to the accuracy of the WENO reconstructions, \cite{HAP:2005:mappedWENO} pointed out a previously unnoticed phenomenon of loss of accuracy close to local extrema with non-vanishing third derivative and proposed a post processing (called mapping) of the nonlinear weights aimed at closing the gap between linear and nonlinear weights for very smooth functions and recover the full accuracy also in this exceptional situation. Their original technique suitable for fifth order schemes, together with later improvements that extend it to the case of very high order schemes, is now known as WENO-M (see \cite{FengHuangWang:14} and references therein). Another approach to solve the same problem, known as WENO-Z, consists in computing additional indicators, using the Jiang-Shu ones as building blocks, and it is effective from orders higher than 5 (see \cite{DB:2013} and references therein).

Recently, \cite{Arandiga} proposed a completely different technique. Upon observing that in the Jiang-Shu formulation \cite{JiangShu:96}, the occurrence or not of the loss of accuracy is controlled by the relative size of the smoothness indicators and of the constant $\epsilon$, they proposed to set $\epsilon=h^2$ and showed the optimality of this choice in the case of uniform meshes, for WENO reconstructions of arbitrary order of accuracy. Indeed their technique allows to recover the correct asymptotic convergence rates even close to local extrema. Moreover, at the same time, it stabilises the convergence rates close to the asymptotic ones already for very coarse meshes, thus providing a reconstruction scheme that converges at the expected rate already on the meshes used in practice for computations and not only on very fine meshes.
\cite{Kolb2014} extended this idea to the third order CompactWENO (CWENO3) scheme of \cite{LPR:99:1d}, which was originally introduced in the context of central schemes since for the standard WENO3 scheme  one cannot define linear weights that can yield third order accurate reconstructions at the cell centres. Of course the reconstruction is useful also in the case of non-central schemes for balance laws, when the reconstruction at the cell centre is needed in the well-balanced quadrature \cite{NatvigEtAl,PS:shentropy}. The CWENO reconstruction was extended to higher order \cite{Capdeville:08} and higher space dimensions \cite{LPR:02:CWENO2d4thorder,LP:12:CWENO3d}, in the uniform grid setting and to non-uniform meshes of quad-tree type \cite{CRS}, where the CWENO approach is very convenient since it is based on linear weights that do not depend on the relative size of the neighbours. In this last paper, the choice $\epsilon=h$ was found useful in numerical experiments.
WENO-M and WENO-Z are targeted to very high order reconstructions on uniform meshes, but in applications requiring local mesh refinement and unstructured meshes, due to the variability of the disposition and size of the neighbours and the fact that the mesh may change at every timestep, reconstructions with very small stencils and order up to four are to be favoured with respect to those requiring the collection of information over large stencils.
This paper is devoted to the analysis of the non-uniform mesh case and compares the choices $\epsilon=h^2$ and $\epsilon=h$ in the WENO3 and CWENO3 schemes in one space dimension. Several aspects are considered, from the formal accuracy, to stability, to the experimental orders of convergence observed on both coarse and fine meshes.

In Section \S\ref{sec:weno} and \S\ref{sec:cweno} we analyse the effects of different choices of $\epsilon$ in the case of WENO3 and, respectively, CWENO3 schemes on non-uniform meshes. In \S\ref{sec:numerical} we present numerical simulations corroborating our findings, including also tests on balance laws and an h-adaptive scheme for conservation laws. In \S\ref{sec:conclusions} we draw some conclusions and illustrate perspectives for future work.

\section{WENO3}\label{sec:weno}
The third order WENO reconstruction \cite{Shu97} considers the two first-neighbours of the cell in which the reconstruction is to be computed. We thus consider a non-uniform grid with $h_j=h$, $h_{j-1}=\beta h$ and $h_{j+1}=\gamma h$. 
The two candidate polynomials are the linear polynomials matching the cell average of the central cell and of one of the neighbours, namely
\begin{subequations}
\label{eq:WENO:Plinear}
\begin{align}
P_{j,L}(x) &= \ca{u}_j + \sigma_{j,-} (x-x_j),\\
P_{j,R}(x) &= \ca{u}_j + \sigma_{j,+} (x-x_j),
\end{align}
\end{subequations}
where $\sigma_{j,\pm}$ are given by
\[
\sigma_{j,-} = 2\frac{\ca{u}_j-\ca{u}_{j-1}}{(1+\beta)h},
\qquad
\sigma_{j,+} = 2\frac{\ca{u}_{j+1}-\ca{u}_{j}}{(1+\gamma)h}.
\]

The central reconstruction is the second order polynomial whose cell averages in $\Omega_j$ and $\Omega_{j\pm1}$ match $\ca{u}_j$ and $\ca{u}_{j\pm1},$ respectively, and is given by
\begin{equation}\label{eq:P2}
P^{\text{OPT}}_j(x) = a + b(x-x_j)+c(x-x_j)^2,
\end{equation}
where
\begin{gather*}
a=\ca{u}_j-\frac{c}{12}h^2,\\
b=\frac{(\tfrac12+\beta)\sigma_{j,+}+(\tfrac12+\gamma)\sigma_{j,-}}{1+\beta+\gamma},\\
c=\frac32 \frac{\sigma_{j,+}-\sigma_{j,-}}{h(1+\beta+\gamma)}.
\end{gather*}
One may easily verify that
\[
P^{\text{OPT}}_j(x_{j+1/2}) = C^+_{j,L} P_{j,L}(x_{j+1/2}) + C^+_{j,R} P_{j,R}(x_{j+1/2})
\]
for 
\[ 
C^+_{j,L} = \frac{\gamma}{1+\beta+\gamma},
\qquad
C^+_{j,R} = \frac{1+\beta}{1+\beta+\gamma}
\]
and similarly that the optimal weights for the reconstruction at $x_{j-1/2}$ are
\[
C^-_{j,L} = \frac{1+\beta}{1+\beta+\gamma},
\qquad
C^-_{j,R} = \frac{\gamma}{1+\beta+\gamma}.
\]
Note that, in the case of a uniform mesh, $\beta=\gamma=1$ and one recovers the weights computed in \cite{Shu97}
and \cite{Arandiga}.

In order to investigate the accuracy of the reconstruction, we apply it to the cell averages of a regular function $u(x)$, whose Taylor expansions are given by
\begin{align*}
\ca{u}_j &= u_j + \tfrac{h^2}{24}\us + \tfrac{h^4}{1920}\uq + O(h^6),
\\
\ca{u}_{j+1 }&= u_j + \tfrac{h}{2}(\gamma +1) \up + \tfrac{h^2}{24} (4 \gamma^2 +6 \gamma +3)\us + \tfrac{h^3}{48}(\gamma +1)  (2 \gamma^2 +2 \gamma +1) \ut + O(h^4),
\\
\ca{u}_{j-1} &= u_j - \tfrac{h}{2}(\beta +1) \up + \tfrac{h^2}{24} (4 \beta^2 +6 \beta +3)\us - \tfrac{h^3}{48}(\beta +1)  (2 \beta^2 +2 \beta +1) \ut + O(h^4).
\end{align*}

The accuracy of the reconstruction of smooth data depends on  the rate of convergence to zero of the discrepancy between the optimal and the nonlinear weights.
\begin{lemma}
If $ u$ is smooth enough in the stencil $\{\Omega_{j-i}, \Omega_j, \Omega_{j+1}\}$, then 
\begin{equation}
\label{eq:Taylor_ind}
\begin{aligned}
I_{j,L}   & = \frac{4}{(1+\beta)^2}(\ca{u}_j-\ca{u}_{j-1})^2 =  \\
& = {u^\prime_j}^2h^2 - \frac13 (2\beta+1)\up\us h^3+
\frac{1}{36} [ 3 ( 2 \beta^2 + 2 \beta +1) \up \ut + (4 \beta^2 + 4 \beta +1) {u^{\prime\prime}_j}^2] h^4 + \bigO(h^5),
\\
I_{j,R}   & = \frac{4}{(1+\gamma)^2}(\ca{u}_{j+1}-\ca{u}_{j})^2 = \\
&= {u^\prime_j}^2h^2 + \frac13 (2\gamma+1)\up \us h^3+
\frac{1}{36} [ 3 ( 2 \gamma^2 + 2 \gamma +1) \up \ut + (4 \gamma^2 + 4 \gamma +1) {u^{\prime\prime}_j}^2] h^4 + \bigO(h^5).
\end{aligned}
\end{equation}
\end{lemma}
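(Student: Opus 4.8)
The plan is to exploit the fact that the two candidate polynomials are linear, which collapses the Jiang--Shu indicator \eqref{eq:ShuInd} to a single term. Since $P_{j,L}(x)=\ca{u}_j+\sigma_{j,-}(x-x_j)$ has constant first derivative $P'_{j,L}=\sigma_{j,-}$ and vanishing derivatives of all orders $r\geq2$, only the $r=1$ summand survives. Using $|\Omega_j|=h_j=h$ this gives
\begin{equation*}
I_{j,L}=h_j\int_{\Omega_j}\sigma_{j,-}^2\,\D x=h_j^2\,\sigma_{j,-}^2=h^2\sigma_{j,-}^2,
\end{equation*}
and substituting the explicit slope $\sigma_{j,-}=2(\ca{u}_j-\ca{u}_{j-1})/((1+\beta)h)$ immediately yields the closed form $I_{j,L}=\frac{4}{(1+\beta)^2}(\ca{u}_j-\ca{u}_{j-1})^2$, which is exactly the first equality in \eqref{eq:Taylor_ind}. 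The computation for $I_{j,R}$ is identical after replacing $\beta$ by $\gamma$ and $\ca{u}_{j-1}$ by $\ca{u}_{j+1}$, so it suffices to carry out the left case and transcribe the result.

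Next I would substitute the supplied Taylor expansions of $\ca{u}_j$ and $\ca{u}_{j-1}$ into the difference $\ca{u}_j-\ca{u}_{j-1}$. The key algebraic observation is that the factor $(1+\beta)$ in the denominator can be cancelled at the source: collecting terms one finds that the coefficient of $\us$ simplifies as $1-(4\beta^2+6\beta+3)=-2(2\beta+1)(\beta+1)$, so that $(\beta+1)$ divides every term through order $h^3$ and
\begin{equation*}
\ca{u}_j-\ca{u}_{j-1}=(\beta+1)\!\left[\tfrac{h}{2}\up-\tfrac{h^2}{12}(2\beta+1)\us+\tfrac{h^3}{48}(2\beta^2+2\beta+1)\ut\right]+\bigO(h^4).
\end{equation*}
Squaring this bracket and multiplying by $4$ removes the $(1+\beta)^2$ denominator cleanly, which is what makes the final expansion depend on $\beta$ only through the numerators displayed in the statement.

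The remaining work is to expand the square and retain all contributions up to $\bigO(h^5)$. The $h^2$ term comes from $(\tfrac{h}{2}\up)^2$, the $h^3$ term from the cross product of $\tfrac{h}{2}\up$ with the $\us$ term, and the $h^4$ term from two sources: the square of the $\us$ term, contributing $(2\beta+1)^2(\us)^2=(4\beta^2+4\beta+1)(\us)^2$, and the cross product of $\tfrac{h}{2}\up$ with the $\ut$ term, contributing $3(2\beta^2+2\beta+1)\up\ut$ after clearing the common denominator $36$. Matching these against \eqref{eq:Taylor_ind} closes the proof.

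I do not expect any conceptual obstacle: the argument is a direct, if somewhat lengthy, Taylor computation. The only point demanding care is the bookkeeping of the $h^4$ coefficient, where one must track simultaneously the $(\us)^2$ and $\up\ut$ contributions and verify that the factorisation $(2\beta+1)^2=4\beta^2+4\beta+1$ reconciles the square of the slope difference with the stated form $\tfrac{1}{36}[3(2\beta^2+2\beta+1)\up\ut+(4\beta^2+4\beta+1)(\us)^2]$.
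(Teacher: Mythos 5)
Your proposal is correct and follows exactly the route the paper intends: the paper states this lemma without an explicit proof, treating it as the direct computation you carry out, namely reducing the Jiang--Shu indicator \eqref{eq:ShuInd} to the single $r=1$ term for the linear candidates, substituting the slopes $\sigma_{j,\pm}$, and squaring the Taylor expansion of the cell-average differences after factoring out $(1+\beta)$ (resp.\ $(1+\gamma)$). All of your algebraic steps check out, including the factorisations $1-(4\beta^2+6\beta+3)=-2(2\beta+1)(\beta+1)$ and the bookkeeping of the $h^4$ coefficient; the only sign change between the two cases (the $+\tfrac13(2\gamma+1)\up\us h^3$ term for $I_{j,R}$) emerges automatically from the expansion of $\ca{u}_{j+1}$, as your substitution prescription implies.
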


\begin{remark}\label{rem:extrema:WENO}
When the two regularity indicators $I_{j,L}$ and $I_{j,R}$ are very close to each other, procedure \eqref{eq:omega} gives nonlinear weights approximately equal to the linear ones and thus the reconstruction polynomial corresponds to the central second order polynomial. This is highly desirable for the reconstruction of smooth data. On the other hand, when a jump discontinuity is present, the imbalance between the regularity indicators will bias the reconstruction polynomial towards the smoothest stencil (e.g. $\widetilde{\omega}_{j,L}\simeq1$ and $\widetilde{\omega}_{j,R}\simeq0$ if the jump is between $x_j$ and $x_{j+1}$), avoiding oscillations but lowering the accuracy by one order. Unfortunately this may happen also close to local extrema of a smooth function, where both indicators would be close to $0$, but a slight imbalance between them may be amplified by the procedure \eqref{eq:omega} for the computation of nonlinear weights.

Intuitively, if $u'(x_{j+1/2})=0$, $I_{j,R}=(\ca{u}_{j+1}-\ca{u}_{j})^2$ would be much smaller than $I_{j,L}=(\ca{u}_{j}-\ca{u}_{j-1})^2$ and the nonlinear weights will differ quite a lot from the optimal ones, unless $\epsilon$ is chosen such that  it  dominates both $I_{j,R}$ and $I_{j,L}$ in the denominators. Thus a value of $\epsilon$ that does not depend on $h$ will give rise to reconstructions that may change their order of convergence close to local extrema, depending on whether $I_{j,L/R}\gg \epsilon$ or not. The following computations will make this more precise and, for the above mentioned argument, we will concentrate on the case of non-constant $h$-dependent $\epsilon$.
\end{remark}

\begin{theorem}
Let $u$ be a smooth function on a stencil  $\{ \Omega_{i-1}, \ \Omega_i , \ \Omega_{i+1} \},$  with $h_{j-1}= \beta h, \ h_j=h$ and $h_{j+1}= \gamma h.$
Let $\omega^{\pm}_{j,L}, \ \omega^{\pm}_{j,R} $ be the nonlinear weights defined in
\eqref{eq:omega} for the cell $\Omega_j.$ Then,  we have

\begin{equation}  \label{eq:omega_h_h2}
\begin{aligned}
\omega^{+}_{j,L} &= C^{+}_{j,L}\left[
1+\frac{ 2 {\tau}}{3(\hat{\epsilon}+p{\up}^2)} (1+\beta)\up \us h^k+\bigO(h^{k+1})
\right],
\\
\omega^{-}_{j,L} &= C^{-}_{j,L}\left[
1+\frac{ 2 {\tau}}{3(\hat{\epsilon}+p{\up}^2)} \gamma \up \us h^k+\bigO(h^{k+1})
\right],
\\
\omega^{+}_{j,R} &= C^{+}_{j,R}\left[
1-\frac{ 2 {\tau}}{3(\hat{\epsilon}+p{\up}^2)} \gamma \up \us h^k+\bigO(h^{k+1})
\right],
\\
\omega^{-}_{j,R} &= C^{-}_{j,R}\left[
1-\frac{ 2 {\tau}}{3(\hat{\epsilon}+p{\up}^2)} (1+ \beta) \up \us h^k+\bigO(h^{k+1})
\right],
\end{aligned}
\end{equation}
with $ p=0, \   k=2$  if $\epsilon=\hat{\epsilon}h,$ 
$ p=k=1$  if $\epsilon=\hat{\epsilon}h^2.$ 
\end{theorem}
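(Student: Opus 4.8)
The plan is to start from the definition of the nonlinear weights in \eqref{eq:omega} and to track the asymptotic expansion of each factor as $h\to0$, feeding in the expansions of the indicators supplied by the preceding Lemma. The whole computation rests on one structural observation: since $I_{j,L}$ and $I_{j,R}$ share the same leading term ${\up}^2 h^2$, the denominators $\epsilon+I_{j,\lambda}$ differ from each other only at higher order, and this common behaviour cancels in the normalisation step of \eqref{eq:omega}. I would therefore treat the two cases $\epsilon=\hat{\epsilon}h^2$ and $\epsilon=\hat{\epsilon}h$ in parallel, keeping $p$ and $k$ as parameters as long as possible.

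First I would substitute each choice of $\epsilon$ and factor the leading order out of the denominator. For $\epsilon=\hat{\epsilon}h^2$ the terms $\hat{\epsilon}h^2$ and ${\up}^2h^2$ combine, so one factors $(\hat{\epsilon}+{\up}^2)h^2$ and the remaining bracket is $1+\bigO(h)$, the $\bigO(h)$ coefficient carrying the sign difference between $I_{j,L}$ and $I_{j,R}$. For $\epsilon=\hat{\epsilon}h$ the term $\hat{\epsilon}h$ dominates, so one factors $\hat{\epsilon}h$ and the bracket is again $1+\bigO(h)$, but now the leading $\bigO(h)$ contribution is ${\up}^2h/\hat{\epsilon}$, \emph{identical} for $L$ and $R$. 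Raising to the power $-\tau$ through the binomial series and multiplying by $C_\lambda$ then presents each unnormalised weight $\widetilde{\omega}_\lambda$ as $C_\lambda$ times a common prefactor times a bracket $1+\alpha_\lambda h+\dots$

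Next comes the normalisation. Writing $\omega_\lambda=\widetilde{\omega}_\lambda/(\widetilde{\omega}_L+\widetilde{\omega}_R)$ and using that the optimal weights sum to one, $C^{\pm}_{j,L}+C^{\pm}_{j,R}=1$, the common prefactor cancels and a short expansion of the quotient shows that every correction common to $L$ and $R$ drops out, leaving $\omega^{\pm}_{j,L}=C^{\pm}_{j,L}\left[1+C^{\pm}_{j,R}\,\delta\,h^k+\bigO(h^{k+1})\right]$, where $\delta$ is the difference of the first $\beta,\gamma$-dependent coefficients of the two brackets (and symmetrically for $R$). From \eqref{eq:Taylor_ind} this difference carries $(2\beta+1)+(2\gamma+1)=2(\beta+\gamma+1)$, and multiplying by $C^{+}_{j,R}=(1+\beta)/(1+\beta+\gamma)$ collapses the factor $(\beta+\gamma+1)$ and leaves exactly $\tfrac{2\tau}{3(\hat{\epsilon}+p{\up}^2)}(1+\beta)\up\us$, as claimed in \eqref{eq:omega_h_h2}; the other three weights follow from the same identity with the appropriate optimal weight, with $p=1$ in the $\epsilon=\hat{\epsilon}h^2$ case.

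The delicate point, and the place I would be most careful, is the case $\epsilon=\hat{\epsilon}h$. There the leading correction to each bracket sits at order $h$, but being independent of $\beta,\gamma$ it cancels in $\delta$, so the first surviving contribution is at order $h^2$ — this is the origin of $k=2$, $p=0$. At that order the binomial expansion of $(1+y)^{-\tau}$ produces a quadratic term proportional to ${\up}^4$ through its $\tfrac{\tau(\tau+1)}{2}y^2$ piece; I would need to verify that this term is again symmetric under $L\leftrightarrow R$ and hence drops out of the difference, so that only the genuinely asymmetric $\bigO(h^2)$ part of the indicator survives to produce the stated coefficient. Once this second cancellation is checked, both cases close with the single unified formula asserted by the theorem.
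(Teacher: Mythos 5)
Your proposal is correct: every step checks out, including the two points where care is needed. The structural identity you rely on, namely that normalisation with $C^{\pm}_{j,L}+C^{\pm}_{j,R}=1$ turns the bracket coefficients $\alpha_\lambda$ into $\omega^{\pm}_{j,\lambda}=C^{\pm}_{j,\lambda}\bigl[1+C^{\pm}_{j,\bar\lambda}(\alpha_\lambda-\alpha_{\bar\lambda})h^k+\bigO(h^{k+1})\bigr]$, is exactly right, and your final collapse $C^{+}_{j,R}(\beta+\gamma+1)=1+\beta$ (and its three analogues) reproduces the stated coefficients with the correct signs. Your flagged verification in the $\epsilon=\hat{\epsilon}h$ case also goes through: the $\tfrac{\tau(\tau+1)}{2}y^2$ term of the binomial series is $\tfrac{\tau(\tau+1)}{2}{\up}^4h^2/\hat{\epsilon}^2+\bigO(h^3)$ for both $L$ and $R$, hence symmetric and absent from the difference.

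The route differs from the paper's in its algebraic bookkeeping, and the comparison is instructive. The paper never expands the common part of the denominators: following Ar\`andiga et al., it relates the two unnormalised weights through the exact identity
$(\epsilon+I_{j,L})^{-\tau}=(\epsilon+I_{j,R})^{-\tau}\bigl[1+\tfrac{I_{j,R}-I_{j,L}}{\epsilon+I_{j,L}}\sum_{s=0}^{\tau-1}\bigl(\tfrac{\epsilon+I_{j,R}}{\epsilon+I_{j,L}}\bigr)^s\bigr]$,
so the only quantity that must be expanded is the single ratio $\tfrac{I_{j,R}-I_{j,L}}{\epsilon+I_{j,L}}=\tfrac{2(\beta+\gamma+1)}{3(\hat{\epsilon}+p{\up}^2)}\up\us h^k+\bigO(h^{k+1})$; the case distinction ($p$, $k$) enters only there, the common factor $(\epsilon+I_{j,R})^{-\tau}$ cancels exactly against the normalisation, and no second-order cancellation check is ever needed. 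Your version instead factors the leading order out of each denominator separately and expands with the binomial series, which forces the two choices of $\epsilon$ to be treated by different factorisations and requires the explicit symmetry argument at order $h^2$. What your approach buys in exchange is generality in $\tau$: the paper's finite geometric sum presupposes that $\tau$ is a positive integer (the proof takes $\tau\ge 2$), whereas the binomial series $(1+y)^{-\tau}$ is an asymptotic expansion valid for any real $\tau>0$. Both arguments rest on the same key input, the Taylor expansions \eqref{eq:Taylor_ind} of the Jiang--Shu indicators, and the same normalisation mechanism, so the proofs are siblings rather than strangers.
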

\begin{proof}
Let
\begin{equation}
\label{omegaR}
\widetilde{\omega}^{\pm}_{j,R} = 
\frac{C^{\pm}_{j,R}}{(\epsilon+I_{j,R})^{\tau}},
\end{equation}
with $\tau \ge 2.$
Then, following \cite{Arandiga}, we
  write
\begin{equation}
\label{binom}
\widetilde{\omega}^{\pm}_{j,L} = 
\frac{C^{\pm}_{j,L}}{(\epsilon+I_{j,L})^{\tau}}
=
\frac{C^{\pm}_{j,L}}{(\epsilon+I_{j,R})^{\tau}}
\left[
	1+\frac{I_{j,R}-I_{j,L}}{\epsilon+I_{j,L}}
	\sum_{s=0}^{{\tau}-1} \left( \frac{\epsilon+I_{j,R}}{\epsilon+I_{j,L}}\right)^s
\right].
\end{equation}
By using the Taylor expansions  \eqref{eq:Taylor_ind},
 we obtain 

$$ \begin{aligned}
& \frac{I_{j,R}-I_{j,L}}{\epsilon+I_{j,L}}   = 
 \frac{2(\gamma + \beta +1)}{3(\hat{\epsilon}+p{\up}^2) } \up \us h^k +
  \bigO(h^{k+1})
\end{aligned}$$ 
where   $ k=2, \ p=0 $ in the case $\epsilon=\hat{\epsilon}h$
and $k=p=1 $ if $\epsilon=\hat{\epsilon}h^2,$  thus 
$$
\begin{aligned}
\sum_{s=0}^{{\tau}-1}  \left(\frac{\epsilon+I_{j,R}}{\epsilon+I_{j,L}}\right)^s   & =
\sum_{s=0}^{{\tau}-1}  \left[  1+ s 
\frac{\gamma + \beta +1}{3(\hat{\epsilon}+p{\up}^2) } \up \us h^k + \bigO(h^{k+1}) \right]   \\
 &=  {\tau} + \frac{{\tau}({\tau}-1)}{2}  \frac{\gamma + \beta +1}{3(\hat{\epsilon}+p{\up}^2) } \up \us h^k + \bigO(h^{k+1}). 
\end{aligned}
$$

Now, substituting in the expression \eqref{binom}, we have
\begin{equation}
\label{omegaL}
\widetilde{\omega}^{\pm}_{j,L} = 
\frac{C^{\pm}_{j,L}}{(\epsilon+I_{j,R})^{\tau}} \left[ 1 +  
2 \frac{\gamma + \beta +1}{3(\hat{\epsilon}+p{\up}^2) } {\tau}  \up \us h^k + \bigO(h^{k+1}) \right]
\end{equation}
and
\begin{equation}
\label{omegasum}
 \frac{1}{\widetilde{\omega}^{\pm}_{j,L} +\widetilde{\omega}^{\pm}_{j,R} }
= (\epsilon+I_{j,R})^{\tau} \left[1 - \frac{ 2 {\tau} \psi}{3(\hat{\epsilon}+p{\up}^2) }  \up \us h^k + \bigO(h^{k+1}) \right],
\end{equation}
with $\psi=\gamma $ on the right of the cell, $\psi= 1 + \beta $ on the left.
Then, by using \eqref{omegaR},  \eqref{omegaL} and \eqref{omegasum},  with a simple computation, we obtain the desired results.
\end{proof}

\begin{remark}\label{rem:referee:WENO}
The result above establishes the Taylor expansion of $\omega_\lambda - C_\lambda$ up to the term that is enough for the proof of the third order accuracy of the reconstruction in Theorem \ref{theo:WENO3:conv}. However, it is also interesting to observe the decay rate of $\omega_\lambda - C_\lambda$ close to a local extrema, i.e. to study the term $\bigO(h^{k+1})$ in equation \eqref{eq:omega_h_h2}.
Examining the proof above (see also \cite{Arandiga} for the finite difference case), one can easily see that the accuracy order of $\omega_\lambda - C_\lambda$ is the same as that of $\frac{I_{j,R}-I_{j,L}}{\epsilon+I_{j,L}}$.
From \eqref{eq:Taylor_ind} we have that 
$$I_{j,R}-I_{j,L} = \frac23  (\gamma + \beta + 1) \up \us h^3
+ \frac1{18} (\gamma - \beta) (\gamma + \beta + 1) ( 3 \up \ut + 2 {\us}^2) h^4+ \bigO(h^5) 
$$
and 
$I_{j, K} = \bigO(h^{2s+2})$, for $K=L,R$, where $s = 0$ if $\up (x_j)  \ne 0$ and $s = 1$ if $\up (x_j) =0, \us (x_j) \neq0$. 

So, for $s=0$, we have  
$$ \frac{I_{j,R}-I_{j,L}}{\hat \epsilon h^{\nu} + I_{j,L}} = \frac{ \bigO(h^{3})}{\hat \epsilon h^{\nu} + \bigO(h^{2})}
=\bigO(h^{3- \nu}), \   \  \nu=0,1,2,
$$
with the case $\epsilon= 10^{-6}$ corresponding to $\nu= 0.$  
On the other hand, for $s=1$ one finds
$$ \frac{I_{j,R}-I_{j,L}}{\hat \epsilon h^{\nu} + I_{j,L}} = \frac{ \bigO(h^{4})}{\hat \epsilon h^{\nu} + \bigO(h^{4})}
=\bigO(h^{4- \nu}), \   \  \nu=0,1,2.
$$
in general, but
$$ \frac{I_{j,R}-I_{j,L}}{\hat \epsilon h^{\nu} + I_{j,L}} = \frac{ \bigO(h^{5})}{\hat \epsilon h^{\nu} + \bigO(h^{4})}
=\bigO(h^{5- \nu}), \   \  \nu=0,1,2.
$$
in the case $ \beta = \gamma $, which always occours on uniform grids.
\end{remark}

We denote  with
\begin{equation}
\begin{aligned}
u^+_{j-1/2}  &=  \omega^-_{j,L} P_{j,L}(x_{j-1/2}) + \omega^-_{j,R} P_{j,R}(x_{j-1/2}),   \\
u^-_{j+1/2}  & = \omega^+_{j,L} P_{j,L}(x_{j+1/2}) + \omega^+_{j,R} P_{j,R}(x_{j+1/2}) , 
\end{aligned}
\end{equation}
 the reconstructed values at the left and right boundary of cell $\Omega_j$ and we prove a result on the accuracy of
these boundary extrapolated values.

\begin{theorem}\label{theo:WENO3:conv}
 Let
 $u$ be a smooth function in the stencil $ \{\Omega_{j-i}, \Omega_j, \Omega_{j+1}\}.$ 
Then, 
$$ \begin{aligned}  
 u(x_{j+1/2})  & = u^-_{j+1/2} + \bigO(h^3)    \\
 u(x_{j-1/2})   & = u^+_{j-1/2} + \bigO(h^3). 
\end{aligned}  $$
\end{theorem}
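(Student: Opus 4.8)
The plan is to route the estimate through the optimal (central) quadratic $P^{\text{OPT}}_j$, writing, for the right boundary,
\[
u(x_{j+1/2}) - u^-_{j+1/2} = \bigl[u(x_{j+1/2}) - P^{\text{OPT}}_j(x_{j+1/2})\bigr] + \bigl[P^{\text{OPT}}_j(x_{j+1/2}) - u^-_{j+1/2}\bigr]
\]
and proving that each bracket is $\bigO(h^3)$; the estimate at $x_{j-1/2}$ then follows by the same argument, with the $+$ superscripts and optimal weights replaced by their $-$ counterparts.

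For the first bracket I would invoke the accuracy of the central reconstruction: $P^{\text{OPT}}_j$ is the unique quadratic reproducing the three cell averages $\ca{u}_{j-1}, \ca{u}_j, \ca{u}_{j+1}$, hence it reproduces polynomials of degree two in the cell-average sense and its pointwise error at a fixed point of $\Omega_j$ is $\bigO(h^3)$. Concretely, one substitutes the given expressions for $a, b, c$ and the Taylor expansions of $\ca{u}_j, \ca{u}_{j\pm1}$ into $P^{\text{OPT}}_j(x_{j+1/2}) = a + b\tfrac h2 + c\tfrac{h^2}4$ and compares with $u(x_{j+1/2}) = u_j + \tfrac h2\up + \tfrac{h^2}8\us + \bigO(h^3)$; this is a routine expansion I would not grind through.

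For the second bracket I would exploit the identity stated just before the theorem, $P^{\text{OPT}}_j(x_{j+1/2}) = C^+_{j,L}P_{j,L}(x_{j+1/2}) + C^+_{j,R}P_{j,R}(x_{j+1/2})$, so that
\[
P^{\text{OPT}}_j(x_{j+1/2}) - u^-_{j+1/2} = \sum_{K\in\{L,R\}} \bigl(C^+_{j,K} - \omega^+_{j,K}\bigr)P_{j,K}(x_{j+1/2}).
\]
The step I expect to carry the argument is the observation that both weight families sum to one, so that $(C^+_{j,L}-\omega^+_{j,L}) + (C^+_{j,R}-\omega^+_{j,R}) = 0$; this lets me subtract a common reference value and collapse the sum into the single product
\[
\bigl(C^+_{j,L}-\omega^+_{j,L}\bigr)\bigl[P_{j,L}(x_{j+1/2}) - P_{j,R}(x_{j+1/2})\bigr].
\]

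It then remains to size the two factors. The preceding theorem gives $\omega^+_{j,L} - C^+_{j,L} = \bigO(h^k)$ with $k\ge1$ for both admissible choices of $\epsilon$. For the second factor I would use $P_{j,L}(x_{j+1/2}) - P_{j,R}(x_{j+1/2}) = \tfrac h2(\sigma_{j,-}-\sigma_{j,+})$ together with the short expansions $\sigma_{j,-} = \up - \tfrac h6(2\beta+1)\us + \bigO(h^2)$ and $\sigma_{j,+} = \up + \tfrac h6(2\gamma+1)\us + \bigO(h^2)$, whence the difference of slopes is $\bigO(h)$ and the bracket is $\bigO(h^2)$. The second bracket is therefore $\bigO(h^{k+2})$, that is $\bigO(h^3)$ when $\epsilon=\hat\epsilon h^2$ ($k=1$) and even $\bigO(h^4)$ when $\epsilon=\hat\epsilon h$ ($k=2$). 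Adding the two brackets yields the claim. The heart of the proof, and the one delicate point, is precisely this cancellation: the weight perturbation must be paired with the $\bigO(h^2)$ gap between the two candidate polynomials at the interface rather than with the individual $\bigO(1)$ polynomials, which is what upgrades the naive $\bigO(h^k)$ bound to $\bigO(h^3)$.
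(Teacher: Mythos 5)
Your proposal is correct and follows essentially the same route as the paper: the same splitting through $P^{\text{OPT}}_j(x_{j+1/2})$, the Taylor estimate for the central parabola, and the crucial use of $C^+_{j,L}+C^+_{j,R}=\omega^+_{j,L}+\omega^+_{j,R}=1$ to pair the $\bigO(h^k)$ weight discrepancy with an $\bigO(h^2)$ quantity. Your collapsed form $\bigl(C^+_{j,L}-\omega^+_{j,L}\bigr)\bigl[P_{j,L}(x_{j+1/2})-P_{j,R}(x_{j+1/2})\bigr]$ is algebraically identical to the paper's version, which subtracts $u(x_{j+1/2})$ from each candidate polynomial instead.
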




\begin{proof}
We can compute the reconstruction error in the right side of the cell as
\[
u(x_{j+1/2}) - u^-_{j+1/2}=
\underbrace{u(x_{j+1/2}) - P^{\text{OPT}}_j(x_{j+1/2})}_{\bigO(h^3)} +  P^{\text{OPT}}_j(x_{j+1/2}) -u^-_{j+1/2}.
\]
Indeed, the Taylor expansion gives
\begin{equation}
\label{P2Taylor}
   P^{\text{OPT}}_j(x_{j+1/2}) =  u(x_{j+1/2}) + \frac{\gamma}{24} (\beta +1) \ut h^3 + \bigO(h^4) .
\end{equation}
Then, recalling that $C^+_{j,R}+C^+_{j,L}=\omega^+_{j,R}+\omega^+_{j,L}=1$, and computing
\begin{equation}
\label{P1Taylor}
\begin{aligned}
P_{j,R}(x_{j+1/2})  &  =  u(x_{j+1/2}) + \frac{\gamma}6 \us h^2 +  \bigO(h^3), \\
P_{j,L}(x_{j+1/2})  &  =  u(x_{j+1/2}) -  \frac{\beta +1 }6 \us h^2 +  \bigO(h^3),
\end{aligned}
\end{equation}
we estimate
\begin{multline*}
P^{\text{OPT}}_j(x_{j+1/2}) -u^-_{j+1/2} =\\
\left(C^+_{j,R}-\omega^+_{j,R}\right)P_{j,R}(x_{j+1/2})
+\left(C^+_{j,L}-\omega^+_{j,L}\right)P_{j,L}(x_{j+1/2})=\\
\underbrace{\left(C^+_{j,R}-\omega^+_{j,R}\right) }
_{\bigO(h^k)}
\underbrace{\left(P_{j,R}(x_{j+1/2})-u(x_{j+1/2})\right)}
_{\bigO(h^2)}
+\underbrace{\left(C^+_{j,L}-\omega^+_{j,L}\right)}
_{\bigO(h^k)}
\underbrace{\left(P_{j,L}(x_{j+1/2})-u(x_{j+1/2})\right)}
_{\bigO(h^2)}.
\end{multline*}

Using \eqref{eq:omega_h_h2}  in the above formula we have that $k=1$ in the case of $\epsilon=\hat{\epsilon}h^2$ and $k=2$ when $\epsilon=\hat{\epsilon}h$. Thus, with both choices, the reconstruction error is of order $\bigO(h^3)$.
In a similar way, we can compute the reconstruction error in the left side of the cell $\Omega_j.$
\end{proof}

We note that, when $\epsilon=\hat{\epsilon}h^2$,  all terms in the reconstruction error are of order $\bigO(h^3)$, while, for $\epsilon=\hat{\epsilon}h$, the interpolation error of $P^{\text{OPT}}_j$ dominates and the other terms are of higher order. This remark on the balance between the two sources of error is what makes \cite{Arandiga} state that the choice $\epsilon=\hat{\epsilon}h^2$ is optimal and the computations above extend their result to non-uniform meshes.


Next, we want to stress an important difference between finite difference and finite volume schemes. In fact, the accuracy of finite difference schemes depends on the accuracy of the derivative $\tfrac1h(F_{j+1/2}-F_{j-1/2})$ which should be close to $\partial_xf(u(x_{j}))$, while semi-discrete finite volume schemes have their accuracy controlled by the discrepancy between $F_{j+1/2}$ and $f(u(x_{j+1/2}))$, see \cite[\S 17]{LeVeque99}. In the following, we remark that, on uniform meshes, we have the correct accuracy of the derivative in the case of the linear transport equation, while in the numerical tests we will show that this may not maintained on non-uniform meshes, without affecting the convergence order of the fully discrete finite volume scheme.

\begin{remark} \label{rem:uniform:WENO}
We have that in general
\[
u^-_{j+1/2}-u^-_{j-1/2} =
u(x_{j+1/2})-u(x_{j-1/2}) + \bigO(h^3),
\]
but it can  be proven that there is an increased accuracy in the case of a uniform grid. In fact
\begin{align*}
u^-_{j+1/2}-u^-_{j-1/2} = &
\left(P^{\text{OPT}}_j(x_{j+1/2})-P^{\text{OPT}}_{j-1}(x_{j-1/2})\right)\\
&+\left(P^{\text{OPT}}_{j-1}(x_{j-1/2}) -u^-_{j-1/2}\right)
-\left(P^{\text{OPT}}_j(x_{j+1/2})-u^-_{j+1/2}\right)
\end{align*}
and for a uniform grid
\begin{equation} \label{eq:interp2}
P^{\text{OPT}}_j(x_{j+1/2})-P^{\text{OPT}}_{j-1}(x_{j-1/2}) = u(x_{j+1/2})- u(x_{j-1/2}) + \bigO(h^4),
\end{equation}
which raises the question if also the other terms can match this accuracy.

\begin{multline*}
\left(P^{\text{OPT}}_{j-1}(x_{j-1/2}) -u^-_{j-1/2}\right)
-\left(P^{\text{OPT}}_j(x_{j+1/2})-u^-_{j+1/2}\right)=
\\
\left(C^+_{j-1,R}-\omega^+_{j-1,R}\right)P_{j-1,R}(x_{j-1/2})
+\left(C^+_{j-1,L}-\omega^+_{j-1,L}\right)P_{j-1,L}(x_{j-1/2})
\\
-\left(C^+_{j,R}-\omega^+_{j,R}\right)P_{j,R}(x_{j+1/2})
-\left(C^+_{j,L}-\omega^+_{j,L}\right)P_{j,L}(x_{j+1/2})
\end{multline*}
and introducing the Taylor expansion of the approximation error of the linear polynomials the above expression becomes
\begin{align*}
&\left(C^+_{j-1,R}-\omega^+_{j-1,R}\right)\left(u(x_{j-1/2})+\tfrac16\us h^2 +\bigO(h^3)\right)
\\
&+\left(C^+_{j-1,L}-\omega^+_{j-1,L}\right)\left(u(x_{j-1/2})-\tfrac13\us h^2 +\bigO(h^3)\right)
\\
&-\left(C^+_{j,R}-\omega^+_{j,R}\right) \left(u(x_{j+1/2})+\tfrac16\us h^2 +\bigO(h^3)\right)
\\
&-\left(C^+_{j,L}-\omega^+_{j,L}\right) \left(u(x_{j+1/2})-\tfrac13\us h^2 +\bigO(h^3)\right).
\end{align*}
Finally, collecting terms for each power of $h$ and recalling that $C^+_{j,R}+C^+_{j,L}=1=\omega^+_{j,R}+\omega^+_{j,L}$, we find that the the approximation error is of order $\bigO(h^4)$ if
\begin{equation}\label{eq:henrick}
\left(\omega^+_{j,R}-2\omega^+_{j,L}\right) - \left(\omega^+_{j-1,R}-2\omega^+_{j-1,L}\right) = \bigO(h^2).
\end{equation}
For $\epsilon=\hat{\epsilon}h$ the difference between linear and nonlinear weights are of order $\bigO(h^2)$, see \eqref{eq:omega_h_h2} and thus the condition is trivially satisfied; furthermore, direct computation with 
 $\epsilon=\hat{\epsilon}h^2$ shows that condition \eqref{eq:henrick} is satisfied also in this case.

Note that condition \eqref{eq:henrick} is analogous to the one found in \cite[equation (29b)]{HAP:2005:mappedWENO} for the fifth order WENO reconstruction for finite difference schemes; there it was not found useful to design an optimal form of the nonlinear weights, but in this lower order case it is always satisfied.
\end{remark}

\section{CWENO3}\label{sec:cweno}
The third order Compact WENO reconstruction (CWENO3) was introduced in \cite{LPR:99:1d} in the context of central schemes, originally in order to overcome the impossibility of finding linear weights for the reconstruction at cell centre using the WENO3 approach. It is of course useful also in non-staggered schemes, for example if a third order accurate reconstruction is needed at the centre of the cell, like  in the well-balanced quadratures of \cite{NatvigEtAl,PS:shentropy}.  The main idea is to choose linear weights $C_{j,L},C_{j,R}\in(0,1)$ such that $C_{j,L}+C_{j,R}<1$ and to define
\begin{equation}\label{eq:P0}
\begin{aligned}
C_{j,0} &= (1-C_{j,L}-C_{j,R}),\\
P_{j,0}(x) & = \Big (P^{\text{OPT}}_j(x)- C_{j,L} P_{j,L}(x) - C_{j,R} P_{j,R}(x) \Big )/C_{j,0},
\end{aligned}
\end{equation}
so that $P^{\text{OPT}}_j(x)=C_{j,0}P_{j,0}(x)+C_{j,L} P_{j,L}(x)+C_{j,R} P_{j,R}(x)$ at every point in the cell. Unlike in the WENO3 setting, here the values of $C_{j,L}$ and $C_{j,R}$ do not have to satisfy any accuracy-related requirement and thus can be in principle chosen arbitrarily; in particular we can employ the same linear weights, independently on the reconstruction point and on the local mesh geometry in the neighbourhood of the cell.

With the choices $C_{j,L}=C_{j,R}=\tfrac14$,  $C_{j,0} = \tfrac12$ of \cite{LPR:99:1d} one has that
\[
P_{j,0}(x)  = \ca{u}_j-\frac{c}{6}h^2 + (2b-\frac{\sigma_{j,+}+\sigma_{j,-}}{2})(x-x_j)+2c(x-x_j)^2.
\]
\begin{lemma}
For $u$ sufficiently smooth in the stencil $ \{\Omega_{j-i}, \Omega_j, \Omega_{j+1}\}$, we have
$$ \begin{aligned}
I_{j,0}   & = h^2 (2b-\frac{\sigma_{j,+}+\sigma_{j,-}}{2})^2 + \tfrac {52}{3} c^2 h^4 = 
 {\up}^2 h^2 + \tfrac 13 (\beta -\gamma) \up \us h^3 + \\
 & + \left[ \left(\frac{13}3 + \frac1{36} (\beta - \gamma)^2\right) {\us}^2
- \frac1{12}( \beta^2 + \gamma^2 - 4 \beta \gamma - \beta  -\gamma -1) \up 
\ut \right] h^4 + \bigO(h^5).  
\end{aligned} $$
\end{lemma}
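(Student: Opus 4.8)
The plan is to handle the two equalities in the statement in turn. First I would establish the closed form $I_{j,0}=h^2\bigl(2b-\tfrac12(\sigma_{j,+}+\sigma_{j,-})\bigr)^2+\tfrac{52}{3}c^2h^4$ by evaluating the Jiang--Shu functional \eqref{eq:ShuInd} directly on the explicit quadratic $P_{j,0}$; then I would obtain the Taylor expansion in $h$ by inserting the expansions of $\sigma_{j,\pm}$, $b$ and $c$ into this closed form.

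For the closed form, write $P_{j,0}(x)=A+\mu(x-x_j)+2c(x-x_j)^2$ with $\mu=2b-\tfrac12(\sigma_{j,+}+\sigma_{j,-})$, so that $P'_{j,0}(x)=\mu+4c(x-x_j)$ and $P''_{j,0}(x)=4c$. Since a quadratic only contributes the terms $r=1,2$ in \eqref{eq:ShuInd}, it suffices to compute two elementary integrals over the symmetric interval $\Omega_j$. The cross term in $\int_{\Omega_j}(P'_{j,0})^2$ is odd about $x_j$ and vanishes, leaving $\int_{\Omega_j}(P'_{j,0})^2=\mu^2h+\tfrac43c^2h^3$ and $\int_{\Omega_j}(P''_{j,0})^2=16c^2h$; weighting by $h$ and $h^3$ and adding gives $\mu^2h^2+(\tfrac43+16)c^2h^4$, and $\tfrac43+16=\tfrac{52}{3}$ produces the first line.

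For the expansion, the preliminary step is to Taylor expand $\sigma_{j,\pm}$ to order $\bigO(h^3)$ from the cell-average expansions recorded just before the Lemma, obtaining $\sigma_{j,+}=\up+\tfrac h6(2\gamma+1)\us+\tfrac{h^2}{24}(2\gamma^2+2\gamma+1)\ut+\bigO(h^3)$ and its mirror image for $\sigma_{j,-}$ (with $\beta$ in place of $\gamma$ and a sign change on the $\us$ term). Substituting in the formula for $c$ yields $c=\tfrac12\us+\tfrac h8(\gamma-\beta)\ut+\bigO(h^2)$, so that $\tfrac{52}{3}c^2h^4=\tfrac{13}{3}{\us}^2h^4+\bigO(h^5)$ and supplies the pure ${\us}^2$ part of the $h^4$ coefficient. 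For the linear coefficient, the decisive observation is that the $\us$-contribution to the numerator of $b$ cancels identically, because the bracket $(\tfrac12+\beta)(2\gamma+1)-(\tfrac12+\gamma)(2\beta+1)$ is zero; hence $2b=2\up+\bigO(h^2)$ and $\mu=\up-\tfrac h6(\gamma-\beta)\us+\bigO(h^2)$. Squaring $\mu$ and multiplying by $h^2$ then feeds the leading ${\up}^2h^2$, the term $\tfrac13(\beta-\gamma)\up\us h^3$, and the remaining ${\us}^2$ part $\tfrac1{36}(\beta-\gamma)^2{\us}^2h^4$.

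The last and only delicate contribution is the $\up\ut$ coefficient at order $h^4$, which arises from the cross term $2\up\,\mu_2$ in $\mu^2$, where $\mu_2$ is the coefficient of $h^2\ut$ in $\mu$. To reach it I would retain the $\ut$ terms in both $2b$ and $\tfrac12(\sigma_{j,+}+\sigma_{j,-})$ and reduce $\mu_2=\tfrac{N}{12(1+\beta+\gamma)}-\tfrac{M}{24}$ over the common denominator $1+\beta+\gamma$, with $N$ and $M$ explicit polynomials in $\beta,\gamma$. I expect the bookkeeping in this reduction to be the main obstacle: one must verify the polynomial identity $2N-(1+\beta+\gamma)M=(1+\beta+\gamma)\bigl(-\beta^2-\gamma^2+4\beta\gamma+\beta+\gamma+1\bigr)$, i.e. that $1+\beta+\gamma$ divides the numerator and leaves exactly $-(\beta^2+\gamma^2-4\beta\gamma-\beta-\gamma-1)$. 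Granting this, $2\up\,\mu_2=-\tfrac1{12}(\beta^2+\gamma^2-4\beta\gamma-\beta-\gamma-1)\up\ut$, and collecting the contributions at orders $h^2,h^3,h^4$ reproduces the stated expansion, the rest being routine Taylor algebra.
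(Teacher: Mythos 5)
Your proof is correct and is essentially the argument the paper relies on (the lemma is stated there without explicit proof): direct evaluation of the Jiang--Shu functional \eqref{eq:ShuInd} on the explicit quadratic $P_{j,0}$, giving $I_{j,0}=\mu^2h^2+\tfrac{52}{3}c^2h^4$ with $\mu=2b-\tfrac12(\sigma_{j,+}+\sigma_{j,-})$, followed by Taylor expansion of $\sigma_{j,\pm}$, $b$ and $c$. The polynomial identity you flagged as the main obstacle does hold: with $N=(\tfrac12+\beta)(2\gamma^2+2\gamma+1)+(\tfrac12+\gamma)(2\beta^2+2\beta+1)$ and $M=\beta^2+\gamma^2+\beta+\gamma+1$, both sides of your identity equal $-\beta^3-\gamma^3+3\beta^2\gamma+3\beta\gamma^2+6\beta\gamma+2\beta+2\gamma+1$, so the $\up\ut$ coefficient comes out as $-\tfrac1{12}(\beta^2+\gamma^2-4\beta\gamma-\beta-\gamma-1)$ and the expansion closes as claimed.
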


\begin{theorem}
Let $u$ be a smooth function on a stencil $\{\Omega_{j-i}, \Omega_j, \Omega_{j+1}\}$ with $h_{j-1}= \beta h, \ h_j=h$ and $h_{j+1}= \gamma h.$
We denote by $\omega_{j,L}, \ \omega_{j,R}, \  \omega_{j,0}  $ the nonlinear weight defined in
\eqref{eq:omega} in the cell $\Omega_j.$ Then, 
\begin{equation}\label{eq:CWh_h2}
\begin{aligned}
\omega_{j,L} &= C_{j,L}\left[
1+\frac{2 \beta +1 }{3 (\hat{\epsilon}+p{\up}^2)}  \,  {\tau} \, \up \us h^k+\bigO(h^{k+1})
\right],
\\
\omega_{j,R} &= C_{j,R}\left[
1-\frac{ 2 \gamma  +1 }{3 (\hat{\epsilon}+p{\up}^2)}  \,  {\tau} \,  \up \us h^k+\bigO(h^{k+1})
\right],
\\
\omega_{j,0} &= C_{j,0}\left[
1+\frac{ \gamma - \beta }{3 (\hat{\epsilon}+p{\up}^2)}  \,  {\tau} \,   \up \us h^k+\bigO(h^{k+1})
\right],
\end{aligned}
\end{equation}
with $ p=0, \   k=2$ for  $\epsilon=\hat{\epsilon}h,$  and $p=k=1$  for $\epsilon=\hat{\epsilon}h^2.$ 

\end{theorem}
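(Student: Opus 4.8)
The plan is to mirror the proof of the preceding WENO3 theorem, the only structural novelties being that there are now three candidate polynomials, hence three smoothness indicators rather than two, and that a \emph{single} set of linear weights $C_{j,L},C_{j,R},C_{j,0}$, summing to $1$ by construction \eqref{eq:P0}, is used (so there is only one family of $\omega$'s to expand, not a $+$ and a $-$ family). First I would collect the Taylor expansions of the three indicators: $I_{j,L}$ and $I_{j,R}$ are supplied by the Lemma in \S\ref{sec:weno}, while $I_{j,0}$ is given by the Lemma immediately preceding the statement. All three share the leading behaviour $I_{j,K}={\up}^2h^2+\bigO(h^3)$, the $h^3$ coefficients being $-\tfrac13(2\beta+1)$, $+\tfrac13(2\gamma+1)$ and $\tfrac13(\beta-\gamma)$ (each times $\up\us$) for $K=L,R,0$. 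This is the data that drives everything.

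Next, following the algebraic identity \eqref{binom} exploited for WENO, I would single out one indicator as a reference, say $I_{j,R}$, and rewrite $\widetilde{\omega}_{j,L}$ and $\widetilde{\omega}_{j,0}$ as $C_{j,K}\,(\epsilon+I_{j,R})^{-\tau}$ times a bracket of the form $1+\tfrac{I_{j,R}-I_{j,K}}{\epsilon+I_{j,K}}\sum_{s=0}^{\tau-1}(\,\cdot\,)^s$. As recorded in Remark \ref{rem:referee:WENO}, the order of each bracket is governed by the ratio $\tfrac{I_{j,R}-I_{j,K}}{\epsilon+I_{j,K}}$; inserting the two choices of $\epsilon$ into the common denominator $\epsilon+I_{j,K}$, which equals $h^2(\hat\epsilon+{\up}^2)+\bigO(h^3)$ when $\epsilon=\hat\epsilon h^2$ and is dominated by $\hat\epsilon h$ when $\epsilon=\hat\epsilon h$, reproduces exactly the dichotomy $p=k=1$ versus $p=0,\,k=2$ and fixes the common factor $\tfrac{1}{3(\hat\epsilon+p{\up}^2)}$.

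I would then sum the three $\widetilde{\omega}$'s, factoring out $(\epsilon+I_{j,R})^{-\tau}$; the leading term of the bracketed sum is $C_{j,L}+C_{j,R}+C_{j,0}=1$, exactly as $C^{\pm}_{j,L}+C^{\pm}_{j,R}=1$ did in the WENO case, so the normalisation $\omega_{j,K}=\widetilde{\omega}_{j,K}/\sum_\xi\widetilde{\omega}_\xi$ can be expanded by a single geometric series. Collecting the $h^k$ contributions of each $\widetilde{\omega}_{j,K}$ and of the normalising sum, and simplifying, yields the three displayed expansions.

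The main obstacle is purely one of bookkeeping: propagating three coupled $h^k$ corrections through the normalisation and checking that the cross-terms collapse into the compact coefficients $2\beta+1$, $-(2\gamma+1)$ and $\gamma-\beta$. The most reliable safeguard against algebraic slips is the constraint $\sum_K(\omega_{j,K}-C_{j,K})=0$, forced by $\sum_K\omega_{j,K}=\sum_K C_{j,K}=1$; with $C_{j,L}=C_{j,R}=\tfrac14$ and $C_{j,0}=\tfrac12$ this reads $\tfrac14(2\beta+1)-\tfrac14(2\gamma+1)+\tfrac12(\gamma-\beta)=0$, which the claimed coefficients indeed satisfy and which I would use to cross-check the final collection of terms.
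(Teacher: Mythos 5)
Your proposal is correct and follows essentially the same route as the paper's proof: Taylor-expand the three Jiang--Shu indicators, apply the binomial identity with $I_{j,R}$ as the reference indicator to expand $\widetilde{\omega}_{j,L}$ and $\widetilde{\omega}_{j,0}$, then expand the reciprocal of the normalising sum (whose leading term is $C_{j,L}+C_{j,R}+C_{j,0}=1$) and collect the $h^k$ corrections, with the same dichotomy $p=0,\ k=2$ versus $p=k=1$ arising from the two choices of $\epsilon$. Your additional sanity check $\sum_K(\omega_{j,K}-C_{j,K})=0$, verified with $C_{j,L}=C_{j,R}=\tfrac14$, $C_{j,0}=\tfrac12$, is a sensible extra safeguard not present in the paper but does not change the argument.
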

\begin{proof}
Following the same procedure employed to get \eqref{eq:omega_h_h2}  in the WENO3 case, we obtain the Taylor expansions for the nonlinear weights. In particular, for $ \omega_{j,0}$, we find,
\begin{equation}
\label{binomCW}
\widetilde{\omega}_{j,0} = 
\frac{C_{j,0}}{(\epsilon+I_{j,0})^{\tau}}
=
\frac{C_{j,0}}{(\epsilon+I_{j,R})^{\tau}}
\left[
	1+\frac{I_{j,R}-I_{j,0}}{\epsilon+I_{j,0}}
	\sum_{s=0}^{{\tau}-1} \left( \frac{\epsilon+I_{j,R}}{\epsilon+I_{j,0}}\right)^s
\right].
\end{equation}
By using the Taylor expansions  \eqref{eq:Taylor_ind},  we obtain $$\frac{I_{j,R}-I_{j,0}}{\epsilon+I_{j,0}}= 
\frac{3 \gamma -  \beta +1}{3 (\hat{\epsilon}+p{\up}^2) } \up \us h^k + \bigO(h^{k+1})$$ 
where  $k=2, \ p=0 $ if $\epsilon=\hat{\epsilon}h$
and $k=p=1 $ if $\epsilon=\hat{\epsilon}h^2.$  Thus,
$$
\begin{aligned}
\sum_{s=0}^{{\tau}-1}  \left(\frac{\epsilon+I_{j,R}}{\epsilon+I_{j,0}}\right)^s   & =
\sum_{s=0}^{{\tau}-1}  \left[  1+ s 
\frac{3 \gamma -  \beta +1}{3(\hat{\epsilon}+p{\up}^2) } \up \us h^k + \bigO(h^{k+1}) \right]   \\
 &=  \tau + \frac{{\tau}({\tau}-1)}{2}  \frac{3 \gamma -   \beta +1}{3(\hat{\epsilon}+p{\up}^2) } \up \us h^k + \bigO(h^{k+1}). 
\end{aligned}
$$

Now, substituting in \eqref{binomCW}
$$
\widetilde{\omega}_{j,0} = 
\frac{ C_{j,0}}{(\epsilon+I_{j,R})^{\tau}} \left[ 
1 +   \frac{ 3 \gamma  - \beta +1}{3(\hat{\epsilon}+p{\up}^2) } {\tau} \up \us h^k + \bigO(h^{k+1}) \right]
$$
and
$$ \frac{1}{\widetilde{\omega}_{j,L} +\widetilde{\omega}_{j,R} +\widetilde{\omega}_{j,0}}
= (\epsilon+I_{j,R})^{\tau} \left[1 - \frac{2 \gamma +1 }{3(\hat{\epsilon}+p{\up}^2) } {\tau}  \up \us h^k 
+ \bigO(h^{k+1}) \right].$$
Then, recalling \eqref{omegaR}, \eqref{omegaL}, with the different values for $ C_{j,L} $
and $  C_{j,R}$,  we obtain the results  by substituting in
$$ \omega_{j,0}= \frac{\widetilde{\omega}_{j,0}}{\widetilde{\omega}_{j,L} +\widetilde{\omega}_{j,R} +\widetilde{\omega}_{j,0}},  \   \ \ 
\omega_{j,L}= \frac{\widetilde{\omega}_{j,L}}{\widetilde{\omega}_{j,L} +\widetilde{\omega}_{j,R} +\widetilde{\omega}_{j,0}},  \  \  \
\omega_{j,R}= \frac{\widetilde{\omega}_{j,R}}{\widetilde{\omega}_{j,L} +\widetilde{\omega}_{j,R} +\widetilde{\omega}_{j,0}}.
$$
\end{proof}

\begin{remark}\label{rem:referee:CWENO}
Using the same notation of Remark \ref{rem:referee:WENO}, in this case we have that
$$ \begin{aligned}
& I_{j,R}-I_{j,0}  = \frac13 ( 3 \gamma - \beta +1) \up \us h^3 + \\ 
& +  \frac1{12} \left[ ( \gamma - \beta) (3 \gamma - \beta +1)  \up \ut  + 
\frac{ 3 \gamma^2 - \beta^2 + 4 \gamma + 2 \beta \gamma -155}{3} {\us}^2 \right]  h^4 + \bigO(h^5)
\end{aligned} $$
and thus $I_{j,R}-I_{j,0}  = \bigO(h^{3+s})$ independently of the grid employed.
With the same argument of Remark \ref{rem:referee:WENO}, using 
$I_{j, 0} = \bigO(h^{2s+2})$,  one shows that,  when $\up(x_j)=0$,
$C_\lambda-\omega_\lambda=\bigO(h^{4-\nu})$.
\end{remark}

In the CWENO3 reconstruction, only one set of nonlinear weights is computed and then
$u^-_{j+1/2}$ and $u^+_{j-1/2}$ are simply obtained by evaluation of 
$ \widetilde P_j(x)=\omega_{j,L} P_{j,L}(x) + \omega_{j,R} P_{j,R}(x) + \omega_{j,0} P_{j,0}(x)$
at the boundaries of the cell.

\begin{theorem} Let
 $u$ be a smooth function in the stencil $ \{\Omega_{j-i}, \Omega_j, \Omega_{j+1}\}.$ 
Then, 
$$  \begin{aligned}  u(x_{j+1/2}) &  = u^-_{j+1/2} + \bigO(h^3),  \\
u(x_{j-1/2})   & = u^+_{j-1/2} + \bigO(h^3).  \end{aligned} $$
\end{theorem}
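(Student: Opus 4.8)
The plan is to mirror closely the argument used for the WENO3 case in Theorem~\ref{theo:WENO3:conv}, since the CWENO3 reconstruction $\widetilde P_j$ is again a convex combination of the candidate polynomials whose linear weights reproduce the optimal polynomial $P^{\text{OPT}}_j$. First I would split the reconstruction error at the right interface, recalling that $u^-_{j+1/2} = \widetilde P_j(x_{j+1/2})$, as
\[
u(x_{j+1/2}) - u^-_{j+1/2} = \bigl[u(x_{j+1/2}) - P^{\text{OPT}}_j(x_{j+1/2})\bigr] + \bigl[P^{\text{OPT}}_j(x_{j+1/2}) - \widetilde P_j(x_{j+1/2})\bigr].
\]
The first bracket is $\bigO(h^3)$ directly from the Taylor expansion \eqref{P2Taylor}, which concerns only the optimal polynomial and is therefore shared with the WENO3 analysis.

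For the second bracket I would use the identity $P^{\text{OPT}}_j = C_{j,0}P_{j,0} + C_{j,L}P_{j,L} + C_{j,R}P_{j,R}$ built into the definition \eqref{eq:P0}, together with the fact that the nonlinear weights also sum to one, so that $\sum_{\lambda}(C_{j,\lambda}-\omega_{j,\lambda}) = 0$. Subtracting the common value $u(x_{j+1/2})$ — which is exactly the step that exploits this convexity — then lets me rewrite
\[
P^{\text{OPT}}_j(x_{j+1/2}) - \widetilde P_j(x_{j+1/2}) = \sum_{\lambda\in\{0,L,R\}}\bigl(C_{j,\lambda}-\omega_{j,\lambda}\bigr)\bigl(P_{j,\lambda}(x_{j+1/2})-u(x_{j+1/2})\bigr).
\]
Each weight discrepancy $C_{j,\lambda}-\omega_{j,\lambda}$ is $\bigO(h^k)$ with $k\ge1$ by \eqref{eq:CWh_h2}, so it remains only to check that every factor $P_{j,\lambda}(x_{j+1/2})-u(x_{j+1/2})$ is $\bigO(h^2)$.

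For $\lambda=L,R$ this is immediate from \eqref{P1Taylor}, since the linear candidates are the same as in the WENO3 setting. The one genuinely new point, and the step I expect to require the most care, is the polynomial $P_{j,0}$, which is not an interpolant of the data but is defined through \eqref{eq:P0}. Here I would insert $u(x_{j+1/2})$ and use $C_{j,0}+C_{j,L}+C_{j,R}=1$ to obtain, with all quantities evaluated at $x_{j+1/2}$,
\[
C_{j,0}\bigl(P_{j,0}-u\bigr) = \bigl(P^{\text{OPT}}_j-u\bigr) - C_{j,L}\bigl(P_{j,L}-u\bigr) - C_{j,R}\bigl(P_{j,R}-u\bigr),
\]
whose right-hand side is $\bigO(h^3)+\bigO(h^2)=\bigO(h^2)$; since $C_{j,0}=\tfrac12$ is a fixed nonzero constant, it follows that $P_{j,0}(x_{j+1/2})-u(x_{j+1/2})=\bigO(h^2)$, as needed. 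I would stress that the accuracy of $P_{j,0}$ at the point value is $\bigO(h^2)$ and not $\bigO(h^3)$, but this is sufficient because it is multiplied by the $\bigO(h^k)$ weight discrepancy.

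Combining these estimates, each summand is $\bigO(h^{k+2})=\bigO(h^3)$, so the second bracket is $\bigO(h^3)$ and the right-interface estimate follows. The left-interface estimate for $u(x_{j-1/2})$ is obtained identically, replacing the right-boundary expansions of $P^{\text{OPT}}_j$, $P_{j,L}$ and $P_{j,R}$ by their left-boundary counterparts; note that in CWENO3 the same single set of nonlinear weights is used at both interfaces, so no separate $\omega^{\pm}$ bookkeeping is required.
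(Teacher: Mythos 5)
Your proposal is correct and follows essentially the same route as the paper's proof: the same splitting of the error through $P^{\text{OPT}}_j(x_{j+1/2})$, the same use of $C_{j,0}+C_{j,L}+C_{j,R}=\omega_{j,0}+\omega_{j,L}+\omega_{j,R}=1$ to write the remainder as $\sum_\lambda (C_{j,\lambda}-\omega_{j,\lambda})\bigl(P_{j,\lambda}(x_{j+1/2})-u(x_{j+1/2})\bigr)$, and the same $\bigO(h^k)\cdot\bigO(h^2)$ estimate of each summand via \eqref{eq:CWh_h2}. The only difference is local: the paper establishes $P_{j,0}(x_{j+1/2})-u(x_{j+1/2})=\bigO(h^2)$ by stating its Taylor expansion explicitly, whereas you deduce it from the defining identity \eqref{eq:P0} together with $P^{\text{OPT}}_j-u=\bigO(h^3)$, $P_{j,L/R}-u=\bigO(h^2)$ and $C_{j,0}=\tfrac12\neq 0$ --- an equally valid (and computation-free) way to obtain the same estimate.
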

\begin{proof}
Similarly to the WENO3 case, from \eqref{P2Taylor},
we can compute the reconstruction error as
\[
u(x_{j+1/2}) - u^-_{j+1/2}=
\underbrace{u(x_{j+1/2}) - P^{\text{OPT}}_j(x_{j+1/2})}_{\bigO(h^3)} +  P^{\text{OPT}}_j(x_{j+1/2}) -u^-_{j+1/2}.
\]
Then, recalling that $C_{j,R}+C_{j,L}+C_{j,0}=\omega_{j,R}+\omega_{j,L}+\omega_{j,0}=1$,  from 
\eqref{P1Taylor},  and using the Taylor expansion of the quadratic  polynomials
\begin{equation}
 P_{j,0}(x_{j+1/2})  = u(x_{j+1/2})+\frac{\beta - \gamma +1}{12}  \us h^2 +\bigO(h^3),
\end{equation}
we estimate
\begin{multline*}
P^{\text{OPT}}_j(x_{j+1/2}) -u^-_{j+1/2} =\\
\left(C_{j,R}-\omega_{j,R}\right)P_{j,R}(x_{j+1/2})
+\left(C_{j,L}-\omega_{j,L}\right)P_{j,L}(x_{j+1/2})+ \left(C_{j,0}-\omega_{j,0}\right)P_{j,0}(x_{j+1/2})=\\
\underbrace{\left(C_{j,R}-\omega_{j,R}\right) }
_{\bigO(h^k)}
\underbrace{\left(P_{j,R}(x_{j+1/2})-u(x_{j+1/2})\right)}
_{\bigO(h^2)}
+\underbrace{\left(C_{j,L}-\omega_{j,L}\right)}
_{\bigO(h^k)}
\underbrace{\left(P_{j,L}(x_{j+1/2})-u(x_{j+1/2})\right)}
_{\bigO(h^2)} + \\
+\underbrace{\left(C_{j,0}-\omega_{j,0}\right)}
_{\bigO(h^k)}
\underbrace{\left(P_{j,0}(x_{j+1/2})-u(x_{j+1/2})\right)}
_{\bigO(h^2)}.
\end{multline*}
Thanks to \eqref{eq:CWh_h2}, in the formula above  $k=1$ in the case of $\epsilon=\hat{\epsilon}h^2$, while $k=2$ when
$\epsilon=\hat{\epsilon}h$. 
\end{proof}

We note that, with both choices, the reconstruction error is of order $\bigO(h^3)$, but,  when $\epsilon=\hat{\epsilon}h^2$,  all terms in the reconstruction error are of order $\bigO(h^3)$, while for $\epsilon=\hat{\epsilon}h$, the interpolation error of $P^{\text{OPT}}_j$ dominates and the other terms are of lower order. These computations extend the results of  \cite{Kolb2014}
to the non-uniform case.

We conclude this section with the analogous of Remark   \ref{rem:uniform:WENO} in the case of the CWENO3 reconstruction. As for the previous case, this result should be contrasted with the numerical evidence of \S \ref{ssec:numerder}.

\begin{remark}\label{rem:uniform:CWENO}
Proceeding as in Remark   \ref{rem:uniform:WENO},  we obtain that, using a uniform grid,
\[
u^-_{j+1/2}-u^-_{j-1/2} =
u(x_{j+1/2})-u(x_{j-1/2}) + \bigO(h^4).
\]
 In the CWENO3 approximation we have

\begin{multline*}
\left(P^{\text{OPT}}_{j-1}(x_{j-1/2}) -u^-_{j-1/2}\right)
-\left(P^{\text{OPT}}_j(x_{j+1/2})-u^-_{j+1/2}\right)=
\\
\left(C_{j-1,R}-\omega_{j-1,R}\right)P_{j-1,R}(x_{j-1/2})
+\left(C_{j-1,L}-\omega_{j-1,L}\right)P_{j-1,L}(x_{j-1/2})
\\
+\left(C_{j-1,0}-\omega_{j-1,0}\right)P_{j-1,0}(x_{j-1/2})
-\left(C_{j,0}-\omega_{j,0}\right)P_{j,0}(x_{j+1/2})
\\
-\left(C_{j,R}-\omega_{j,R}\right)P_{j,R}(x_{j+1/2})
-\left(C_{j,L}-\omega_{j,L}\right)P_{j,L}(x_{j+1/2}).
\end{multline*}
Using the Taylor expansion of the quadratic  polynomials
\begin{align*}
P_{j-1,0}(x_{j-1/2})  & = u(x_{j-1/2})+\tfrac1{12}\us h^2 +\bigO(h^3),
\\
 P_{j,0}(x_{j+1/2}) & = u(x_{j+1/2})+\tfrac1{12}\us h^2 +\bigO(h^3),
\end{align*}
collecting terms for each power of $h$, recalling  \eqref{eq:interp2} and that $C_{j,R}+C_{j,L}+C_{j,0}=\omega_{j,R}+\omega_{j,L}+\omega_{j,0}=1$, we find that the the approximation error is of order $\bigO(h^4)$ if
\begin{equation}\label{eq:henrickCW}
\left(\omega_{j,0}-\omega_{j-1,0}\right)+2\big [ \left(\omega_{j,R}-2\omega_{j,L}\right) - \left(\omega_{j-1,R}-2\omega_{j-1,L}\right)\big ] = \bigO(h^2).
\end{equation}
For $\epsilon=\hat{\epsilon}h$ the difference between linear and nonlinear weights are of order $\bigO(h^2)$, see \eqref{eq:CWh_h2} with $ k=2,$ so the condition is trivially satisfied; furthermore, in the case $\epsilon=\hat{\epsilon}h^2$, the condition \eqref{eq:henrickCW} is satisfied by using  \eqref{eq:CWh_h2} with $k=1$. Note that equation  \eqref{eq:henrickCW} is analogous to   \eqref{eq:henrick}    in the case of CWENO3 reconstruction.
\end{remark}

\section{Numerical tests}\label{sec:numerical}
First we describe the meshes used in the numerical tests, on the reference interval $[0,1]$. {\em Uniform} grids with $N$ cells, have cells of size $h=1/N$ and cell centres $x_{j+1/2}=(j+\tfrac12)h$. A {\em quasi regular} grid is obtained transforming the cell centres of a uniform grid with the map
\[\varphi(x)=x+0.1*\sin(10\pi x)/5.\]
 The grid spacing in quasi-regular grids is asymptotically described by $\varphi^\prime(x)$. In this way, we obtain a grid with cell sizes $h_j$ such that 
\[ 
 (1-\tfrac{\pi}{5})\tfrac1N \leq h_j \leq  (1+\tfrac{\pi}{5})\tfrac1N.
\]
Note that the ratio of consecutive cells approaches $1$:
\begin{equation} \label{eq:gamma}
\frac{h_{k+1}}{h_k}  \simeq \frac{\varphi^\prime(\tfrac{k+1}{N})}{\varphi^\prime(\tfrac{k}{N})}
  \simeq 1+\frac{1}{N}\frac{\varphi^{\prime\prime}(\tfrac{k}{N})}{\varphi^\prime(\tfrac{k}{N})}.
\end{equation}
In the notation of Sections \ref{sec:weno} and \ref{sec:cweno}, this means that, for increasing $N$, the parameters $\beta$ and $\gamma$ approach the value $1$ that characterise uniform meshes. We will observe that the numerical schemes on quasi-regular grids behave very much like on uniform ones.

Next, we consider {\em random} grids that are obtained moving randomly the interfaces of a uniform grid. We start from a uniform grid with cells of size $h=1/N$ and then we consider grids with interfaces at
\[ \tilde{x}_{j+1/2} = {x}_{j+1/2} + \xi_j\tfrac{h}{4}, \]
where $\xi_j$ are random numbers uniformly distributed in $[-0.5,0.5]$. We have
\[ 
\tfrac34 \tfrac1N \leq h_j \leq  \tfrac54 \tfrac1N.
\]
We use this grid for the purpose of illustration even if, of course, one would not use such  irregular grid in an application.

Finally, observe that in binary-tree mesh refinement in $d$ spatial dimensions, one starts with a uniform mesh and then, guided by some error indicator, each cell may be split (recursively) in $2^d$ equal parts, see  \cite{PS:entropy,CRS} . The ratio of the sizes of adjacent cells is thus a positive  power of $2$ and does not approach $1$. In order to test the schemes on meshes that may be employed by such AMR techniques, we consider the {\em $\alpha\beta\gamma\delta$} meshes, that are composed by adjoining building blocks of $5$ cells of size $\alpha h$, $\beta h$, $h$, $\gamma h$, $\delta h$ respectively.

\subsection{Comparing $\epsilon$'s}
The first set of tests is on the reconstruction error of WENO3 and CWENO3 on non-uniform meshes. An $\alpha\beta\gamma\delta$-grid is set up with cells of size $h,2h,h,h/2,h/2$ with $x=0$ at the centre of the middle cell. In all our tests, we consider nonlinear weights \eqref{eq:omega} with $\tau=2.$ For decreasing values of $h$, cell averages of a function $u(x)$ were set using the fifth order gaussian quadrature rule and the reconstruction at the right boundary of the middle cell was computed and compared with the exact values $u(h/2)$.

\begin{table}
\begin{center}
Reconstruction  error for {$u(x)=e^x$}\\
\begin{tabular}{|r|ll|ll|ll|ll|}
\hline
$h$ & \multicolumn{2}{c|}{$\epsilon=10^{-30}$} &
 \multicolumn{2}{c|}{$\epsilon=10^{-6}$}& 
 \multicolumn{2}{c|}{$\epsilon=h$}&
 \multicolumn{2}{c|}{$\epsilon=h^2$}
 \\
& error & rate & error & rate& error & rate& error & rate\\
\hline
5.00e-02 &   1.16e-05 &       &  1.16e-05 &       &  2.31e-06    &        &  4.70e-06 &     \\
   2.50e-02 &  1.43e-06 &  3.02 &  1.42e-06 &  3.03 &  3.08e-07 &  2.91  & 5.65e-07   & 3.06 \\
   1.25e-02 &   1.78e-07  &  3.01 &  1.72e-07 &  3.04 &  3.96e-08 &  2.96 & 6.92e-08   & 3.03 \\
   6.25e-03 &  2.21e-08 &  3.00 &  1.96e-08 &  3.13 &  5.02e-09 &  2.98 & 8.56e-09   & 3.01 \\
   3.12e-03 &  2.76e-09 &  3.00 &  1.78e-09 &  3.47 &  6.32e-10 &  2.99  & 1.07e-09   & 3.01 \\
   1.56e-03 &  3.45e-10 &  3.00 &  8.15e-11 &  4.45 &  7.92e-11 &  3.00 &  1.33e-10  & 3.00  \\
   7.81e-04 &  4.31e-11 &  3.00 &  2.92e-12 &  4.80 &  9.92e-12 &  3.00 & 1.66e-11   & 3.00 \\
   3.90e-04 &  5.38e-12 &  3.00 &  9.99e-13 &  1.55 &  1.24e-12 &  3.00 & 2.07e-12   &  3.00 \\
   1.95e-04 &  6.73e-13 &  3.00 &  1.48e-13 &  2.78 &  1.55e-13 &  3.00 & 2.59e-13   & 3.00 \\
   9.76e-05 &  8.39e-14 &  3.00 &  1.91e-14 &  2.95 &  1.95e-14 &  3.00 & 3.22e-14   & 3.00 \\
\hline
\end{tabular}
\end{center}
\begin{center}
Reconstruction  error for $ u(x)=\cos\left(2\pi x\right)+x^3$\\
\begin{tabular}{|r|ll|ll|ll|ll|}
\hline
$h$ & \multicolumn{2}{c|}{$\epsilon=10^{-30}$} &
 \multicolumn{2}{c|}{$\epsilon=10^{-6}$}& 
 \multicolumn{2}{c|}{$\epsilon=h$}&
 \multicolumn{2}{c|}{$\epsilon=h^2$}
 \\
& error & rate & error & rate& error & rate& error & rate\\
\hline
5.00e-02 &  7.91e-03 &       &  7.91e-03 &       &  7.61e-04    &           & 6.79e-03 &     \\
   2.50e-02 &  2.00e-03 &  1.99 &  1.99e-03 &  1.99 &  3.12e-05 &  4.61  & 1.06e-03   & 2.68 \\
   1.25e-02 &  5.01e-04 &  2.00 &  4.75e-04 &  2.07 &  1.41e-06 &  4.47 & 9.72e-05   & 3.45 \\
   6.25e-03 &  1.25e-04 &  2.00 &  4.91e-05 &  3.28 &  8.19e-08 &  4.10 & 6.77e-06   & 3.84 \\
   3.12e-03 &  3.13e-05 &  2.00 &  1.04e-06 &  5.55 &  6.35e-09 &  3.69  & 4.36e-07   & 3.96 \\
   1.56e-03 &  7.84e-06 &  2.00 &  1.71e-08 &  5.93 &  6.14e-10 &  3.37 &  2.77e-08  & 3.98 \\
   7.81e-04 &  1.96e-06 &  2.00 &  3.26e-10 &  5.72 &  6.75e-11 &  3.19 & 1.76e-09   & 3.97 \\
   3.90e-04 &  4.90e-07 &  2.00 &  1.20e-11 &  4.77 &  7.92e-12 &  3.09 & 1.14e-10  &  3.95 \\
   1.95e-04 &  1.22e-07 &  2.00 &  1.02e-12 &  3.55 &  9.60e-13 &  3.04 & 7.59e-12   & 3.91 \\
   9.76e-05 &  3.06e-08 &  2.00 &  1.19e-13 &  3.10 &  1.18e-13 &  3.01 & 5.33e-13   & 3.83 \\
\hline
\end{tabular}
\end{center}
\caption{WENO3. Reconstruction errors at $x=0+h/2$ for a grid of five cells of size $h,2h,h,h/2,h/2$ with $x=0$ in the centre of the middle cell. In the first test $u^\prime\neq0$ in the reconstruction stencil, while $u^\prime(0)=0$ in the second case.}
\label{tab:WENO3:epsilon}
\end{table}

In Table \ref{tab:WENO3:epsilon} we show the reconstruction  errors and convergence rates for WENO3 on non-uniform meshes, with different choices of $\epsilon$.
It is clear that the non-uniformity of the mesh has no influence on the convergence rates. In fact, as in the uniform grid tests of \cite{Arandiga}, an irregular convergence rate appears for constant $\epsilon$ when $u'$ vanishes in the central cell, but a regular convergence history can be recovered by employing an $h$-dependent $\epsilon$. We remark that $\epsilon=h$ gives slightly lower errors than $\epsilon=h^2$. We also point out that  repeating the test of Table \ref{tab:WENO3:epsilon}  for a function such that $u'(h/2)=0,$ gives analogous results, indicating that convergence may be degraded whenever $u'$ vanishes in the reconstruction stencil.

\begin{table}
\begin{center}
Reconstruction  error for $  u(x)={e^x}$\\
\begin{tabular}{|r|ll|ll|ll|ll|}
\hline
$h$ & \multicolumn{2}{c|}{$\epsilon=10^{-30}$} &
 \multicolumn{2}{c|}{$\epsilon=10^{-6}$}& 
 \multicolumn{2}{c|}{$\epsilon=h$}&
 \multicolumn{2}{c|}{$\epsilon=h^2$}
 \\
& error & rate & error & rate& error & rate& error & rate\\
\hline
5.00e-02 &   4.60e-06 &       &  4.59e-06 &       &  2.50e-06    &        &  1.05e-06 &     \\
   2.50e-02 &  5.58e-07 &  3.04 &  5.53e-07 &  3.05 &  3.19e-07 &  2.97  & 1.19e-07   & 3.13 \\
   1.25e-02 &   6.88e-08  &  3.02 &  6.61e-08 &  3.06 &  4.03e-08 &  2.99 & 1.42e-08   & 3.07 \\
   6.25e-03 &  8.54e-09 &  3.01 &  7.28e-09 &  3.18 &  5.06e-09 &  2.99 & 1.74e-09   & 3.03 \\
   3.12e-03 &  1.06e-09 &  3.01 &  5.70e-10 &  3.67 &  6.34e-10 &  3.00  & 2.15e-10   & 3.02 \\
   1.56e-03 &  1.33e-10 &  3.00 &  9.70e-13 &  9.20 &  7.94e-11 &  3.00 &  2.67e-11  & 3.01  \\
   7.81e-04 &  1.66e-11 &  3.00 &  6.43e-12 &  -2.73 &  9.93e-12 &  3.00 & 3.32e-12   & 3.00 \\
   3.90e-04 &  2.07e-12 &  3.00 &  1.12e-12 &  2.52 &  1.24e-12 &  3.00 & 4.15e-13   &  3.00 \\
   1.95e-04 &  2.59e-13 &  3.00 &  1.52e-13 &  2.88 &  1.55e-13 &  3.00 & 5.15e-14   & 3.00 \\
   9.76e-05 &  3.22e-14 &  3.00 &  1.95e-14 &  2.96 &  1.95e-14 &  3.00 & 6.44e-15   & 3.00 \\
\hline
\end{tabular}
\end{center}
\begin{center}
Reconstruction  error for $ u(x)=\cos\left(2\pi x\right)+x^3$\\
\begin{tabular}{|r|ll|ll|ll|ll|}
\hline
$h$ & \multicolumn{2}{c|}{$\epsilon=10^{-30}$} &
 \multicolumn{2}{c|}{$\epsilon=10^{-6}$}& 
 \multicolumn{2}{c|}{$\epsilon=h$}&
 \multicolumn{2}{c|}{$\epsilon=h^2$}
 \\
& error & rate & error & rate& error & rate& error & rate\\
\hline
5.00e-02 &  7.85e-03 &       &  7.85e-03 &       &  4.81e-04    &           & 6.38e-03 &     \\
   2.50e-02 &  1.98e-03 &  1.99 &  1.98e-03 &  1.99 &  2.05e-05 &  4.56  & 8.49e-04   & 2.91 \\
   1.25e-02 &  4.97e-04 &  2.00 &  4.64e-04 &  2.09 &  1.07e-06 &  4.27 & 6.06e-05   & 3.81 \\
   6.25e-03 &  1.24e-04 &  2.00 &  3.58e-05 &  3.70 &  7.11e-08 &  3.91 & 3.65e-06   & 4.05 \\
   3.12e-03 &  3.11e-05 &  2.00 &  5.48e-07 &  6.03 &  6.01e-09 &  3.56  & 2.25e-07   & 4.02 \\
   1.56e-03 &  7.78e-06 &  2.00 &  8.89e-09 &  5.94 &  6.04e-10 &  3.31 &  1.42e-08  & 3.98 \\
   7.81e-04 &  1.94e-06 &  2.00 &  1.96e-10 &  5.50 &  6.72e-11 &  3.17 & 9.16e-10   & 3.95 \\
   3.90e-04 &  4.86e-07 &  2.00 &  9.93e-12 &  4.31 &  7.92e-12 &  3.09 & 6.10e-11   &  3.91 \\
   1.95e-04 &  1.22e-07 &  2.00 &  9.91e-13 &  3.32 &  9.60e-13 &  3.04 & 4.28e-12   & 3.83 \\
   9.76e-05 &  3.04e-08 &  2.00 &  1.19e-13 &  3.06 &  1.18e-13 &  3.01 & 3.25e-13   & 3.72 \\
\hline
\end{tabular}
\end{center}

\caption{CWENO3. Reconstruction errors at $x=0+h/2$ for a grid of five cells of size $h,2h,h,h/2,h/2$  with $x=0$ in the centre of the middle cell. In the first test $u^\prime\neq0$ in the reconstruction stencil, while $u^\prime(0)=0$ in the second case.}
\label{tab:CWENO3:epsilon}
\end{table}

In Table \ref{tab:CWENO3:epsilon} we show the same tests for CWENO3; we obtained analogous results and the remarks about the comparison with the uniform grid case could be repeated.

 In  Figure \ref{fig:Comega},  we show the distance $ \vert \omega_{\lambda} - C_{\lambda} \vert $
observed when reconstructing $ u(x)= x^3 + \cos x$ on a non-uniform grid of type $\alpha,\beta,\gamma,\delta.$ 
 As expected, in both the WENO3 and CWENO3 cases, the choice $\epsilon = 10^{-30}$ does not give weights
converging to their optimal values, the choice $\epsilon = 10^{-6}$ behaves similarly on coarse meshes and changes to a convergent regime at about $h=2 \times 10^{-3}.$ On the other hand, the choices $ \epsilon= h$ and $ \epsilon=h^2$
give rise to a more regular convergence histories,
with the former giving lower discrepancies between nonlinear  and optimal weights.

\begin{table}
\begin{center}
\begin{tabular}{|c|cc|cc|cc|}                            
\hline
& \multicolumn{6}{c|}{$u(x)= x^3 + \cos(2 \pi x)$}  \\
\hline
 & \multicolumn{2}{c|} {$\epsilon=10^{-6}$} & \multicolumn{2}{c|} {$\epsilon=h$}  & \multicolumn{2}{c|}{$\epsilon=h^2$}
 \\
$h$ & $C^+_R-\omega^+_R$ & rate & $C^+_R-\omega^+_R$ & rate& $C^+_R-\omega^+_R$ & rate\\
\hline
5.00e-02   &    1.38e-01 &           &  2.27e-02 &        & 1.67e-01 &        \\
2.50e-02  & 1.38e-01 &   0.00  &3.37e-03 &   2.75    & 7.45e-02 &  0.65  \\
1.25e-02   &1.37e-01 &  0.02  &  4.33e-04 & 2.96  & 2.89e-02 & 1.37 \\
  6.25e-03   &1.06e-01 &  0.36 &  5.44e-05 & 2.99    & 8.29e-03 & 1.80  \\
3.12e-03   &1.89e-02 &  2.49  & 6.80e-06 &  3.00     &2.15e-03 &  1.95   \\
 1.56e-03  & 1.32e-03 &  3.84 & 8.50e-07 &  3.00 &  5.42e-04 &  1.99 \\
7.81e-04   & 8.29e-05 &  3.99  & 1.06e-07 &  3.00  &  1.36e-04 & 2.00   \\
3.90e-04  & 5.18e-06&  4.00  & 1.33e-08 &  3.00  & 3.40e-05 & 2.00  \\
  1.95e-04 &3.24e-07 &  4.00  & 1.66e-09 &  3.00   & 8.49e-06 & 2.00   \\
   9.76e-05  &2.02e-08 &   4.00 & 2.07e-10 &  3.00  & 2.12e-06 & 2.00   \\
\hline
\end{tabular}

\begin{tabular}{|c|cc|cc|cc|}                            
\hline
& \multicolumn{6}{c|}{$u(x)= e^x$}  \\
\hline
 & \multicolumn{2}{c|}{$\epsilon=10^{-6}$} & \multicolumn{2}{c|} {$\epsilon=h$}  & \multicolumn{2}{c|}{$\epsilon=h^2$}
 \\
$h$ & $C^+_R-\omega^+_R$ & rate & $C^+_R-\omega^+_R$ & rate& $C^+_R-\omega^+_R$ & rate\\
\hline
5.00e-02   &    3.08e-02 &           &  1.33e-03 &        & 1.47e-02 &        \\
2.50e-02  & 1.48e-02 &   1.06  &3.44e-04 &   1.95     & 7.22e-03 &  1.02  \\
1.25e-02   &7.23e-03 &  1.03  &  8.75e-05 & 1.97  & 1.79e-03 & 1.01  \\
  6.25e-03   &3.51e-03 &  1.04 &  2.21e-05 & 1.99    & 8.94e-04 & 1.00  \\
3.12e-03   &1.63e-03 &  1.11  & 5.55e-06 &  1.99     &4.47e-04 &  1.00   \\
 1.56e-03  & 6.34e-04 &  1.36 & 1.39e-06 &  2.00 &  2.23e-04 &  1.00 \\
7.81e-04   & 1.69e-04 &  1.91  & 3.48e-07 &  2.00  &  1.12e-04 & 1.00   \\
3.90e-04  & 2.95e-05&  2.52  & 8.71e-08 &  2.00  & 5.58e-05 & 1.00 \\
  1.95e-04 &4.10e-06 &  2.85  & 2.18e-08 &  2.00   & 2.79e-05 & 1.00   \\
   9.76e-05  &5.27e-07 &   2.99 & 5.45e-09 &  2.00  & 1.39e-05 & 1.00   \\
\hline
\end{tabular}
\end{center}
\caption{WENO3. Distance and order of convergence of the optimal weights $C^+_R$ and the nonlinear weights $\omega^+_R$ , as a function of  the mesh width $h$, in the cell centered in $x=0$.}
\label{tab:WENO3:Comega}
\end{table}

\begin{table}

\begin{center}

\begin{tabular}{|c|cc|cc|cc|}                            
\hline
& \multicolumn{6}{c|}{$u(x)= x^3 + \cos(2 \pi x)$}  \\
\hline
$h$ & \multicolumn{2}{c|} {$\epsilon=10^{-6}$} & \multicolumn{2}{c|} {$\epsilon=h$}  & \multicolumn{2}{c|}{$\epsilon=h^2$}
 \\
& $C_0-\omega_0$ & rate & $C_0-\omega_0$ & rate& $C_0-\omega_0$ & rate\\
\hline
5.00e-02   &    4.99e-01 &           &  2.43e-01 &        & 4.89e-01 &        \\
2.50e-02  & 4.99e-01 &   0.00  &4.55e-02 &   2.42    & 4.42e-01 &  0.15  \\
1.25e-02   &4.98e-01 &  0.00  &  6.02e-03 & 2.92  & 2.82e-01 & 0.65 \\
  6.25e-03   &4.81e-01 &  0.05 &  7.59e-04 & 2.99    & 1.05e-01 & 1.43  \\
3.12e-03   &2.10e-01 &  1.20  & 9.50e-05 &  3.00     &2.93e-02 &  1.84   \\
 1.56e-03  & 1.81e-02 &  3.53 & 1.19e-05 &  3.00 &  7.53e-03 &  1.96 \\
7.81e-04   & 1.16e-03 &  3.97  & 1.48e-06 &  3.00  &  1.90e-03 & 1.99   \\
3.90e-04  & 7.24e-05&  4.00  & 1.85e-07 &  3.00  & 4.75e-04 & 2.00  \\
  1.95e-04 &4.53e-06 &  4.00  & 2.32e-08 &  3.00   & 1.19e-04 & 2.00   \\
   9.76e-05  &2.83e-07 &   4.00 & 2.90e-09 &  3.00  & 2.97e-05 & 2.00   \\
\hline
\end{tabular}

\begin{tabular}{|c|cc|cc|cc|}                            
\hline
& \multicolumn{6}{c|}{$u(x)= e^x$}  \\
\hline
$h$ & \multicolumn{2}{c|} {$\epsilon=10^{-6}$} & \multicolumn{2}{c|} {$\epsilon=h$}  & \multicolumn{2}{c|}{$\epsilon=h^2$}
 \\
& $C_0-\omega_0$ & rate & $C_0-\omega_0$ & rate& $C_0-\omega_0$ & rate\\
\hline
5.00e-02   &    3.16e-02 &           &  1.40e-03 &        & 1.52e-02 &        \\
2.50e-02  & 1.42e-02 &   1.16  &3.32e-04 &   2.08     & 6.96e-03 &  1.13  \\
1.25e-02   &6.64e-03 &  1.10  &  8.06e-05 & 2.04  & 3.30e-03 & 1.07  \\
  6.25e-03   &3.15e-03 &  1.07 &  1.98e-05 & 2.02    & 1.61e-03 & 1.04  \\
3.12e-03   &1.44e-03 &  1.13  & 4.92e-06 &  2.01     &7.93e-04 &  1.02   \\
 1.56e-03  & 5.59e-04 &  1.37 & 1.22e-06 &  2.01 &  3.93e-04 &  1.01 \\
7.81e-04   & 1.49e-04 &  1.91  & 3.06e-07 &  2.00  &  1.96e-04 & 1.01   \\
3.90e-04  & 2.59e-05&  2.52  & 7.64e-08 &  2.00  & 9.78e-05 & 1.00 \\
  1.95e-04 &4.59e-06 &  2.85  & 1.90e-08 &  2.00   & 4.89e-05 & 1.00   \\
   9.76e-05  &4.61e-07 &   2.96 & 4.77e-09 &  2.00  & 2.44e-05 & 1.00   \\
\hline
\end{tabular}
\end{center}
\caption{CWENO3. Distance and order of convergence of the optimal weights $C_0$ and the nonlinear weights $\omega_0$ , as a function of  the mesh width $h$, in the cell centered in $x=0$.}
\label{tab:CWENO3:Comega}
\end{table}

Furthermore, Tables \ref{tab:WENO3:Comega}
and \ref{tab:CWENO3:Comega} report the asymptotic behaviour of the nonlinear weights for the WENO3 and CWENO3 recontruction, comparing the case $\up\neq0$ and the case with a smooth extremum in the reconstruction stencil. The behaviours follow the results of  Remarks \ref{rem:referee:WENO} and \ref{rem:referee:CWENO}.

\begin{figure}
\begin{center}
\includegraphics[width=0.49\linewidth]{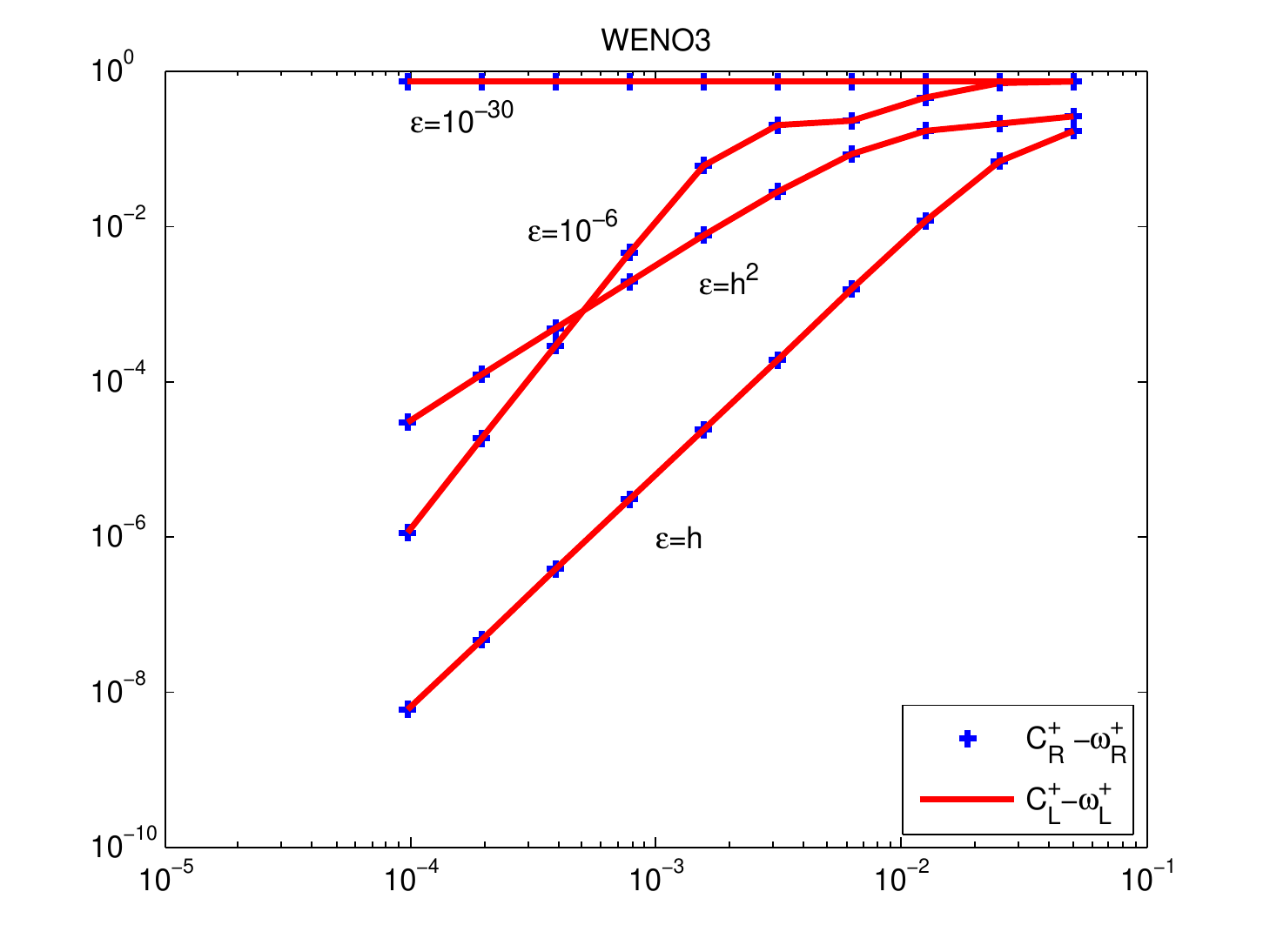}
\hfill
\includegraphics[width=0.49\linewidth]{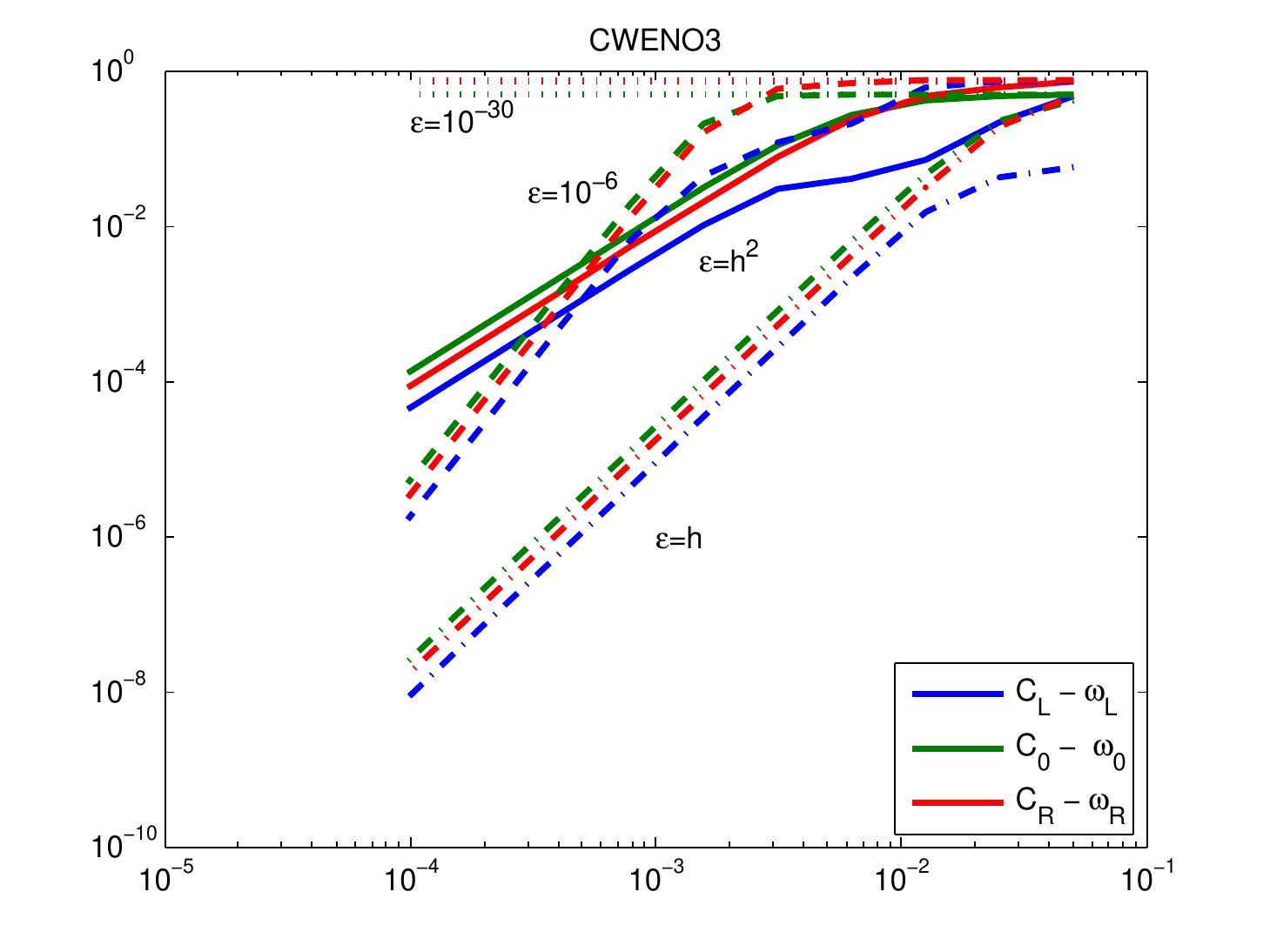}
\end{center}
\caption{Distance between the optimal weights $C_\lambda$ and the nonlinear weights $\omega_\lambda$ as a function of the mesh width $h$, for different choices of $\epsilon$. WENO3 (left) and CWENO3 (right)}
\label{fig:Comega}
\end{figure}

Finally, in order to investigate more deeply the behaviour of the reconstruction procedure, we denote by $\mathcal{R}(\ca{u}_{j-1}, \ca{u}_{j},  \ca{u}_{j+1})$ the map from the three cell averages to the reconstructed boundary value $u^-_{j+1/2}$ and by $\mathcal{P}_2(\ca{u}_{j-1}, \ca{u}_{j},  \ca{u}_{j+1})=P^{\text{OPT}}_j(x_{j+1/2})$ the reconstruction operator that employs the central parabola. The previous test confirmed that $\mathcal{R}(\ca{u}_{j-1}, \ca{u}_{j},  \ca{u}_{j+1})\to\mathcal{P}_2(\ca{u}_{j-1}, \ca{u}_{j},  \ca{u}_{j+1})$ for $h\to0$. In Figure \ref{fig:derivateDellaRic} we test the convergence $\nabla\mathcal{R}\to\nabla\mathcal{P}_2$. The plots show that for $h$-dependent $\epsilon$, $\partial\mathcal{R}/\partial{\ca{u}_k}$ converge quickly to $\partial \mathcal{P}_2/\partial{\ca{u}_k}$ for $k=j-1,j,j+1$. At the opposite hand, for $\epsilon=10^{-30}$ we do not observe such a convergence (note  that $\partial \mathcal{R}/\partial{\ca{u}_{k}}\not\to\partial \mathcal{P}_2/\partial{\ca{u}_{k}}$, $k=j-1,j,j+1$) and $\epsilon=10^{-6}$ shows an hybrid behaviour that changes regime when $h$ falls below a threshold. (These tests were performed on uniform meshes for $u(x)=x^3 + \cos x$).

\begin{figure}
\begin{center}
\includegraphics[width=0.49\linewidth]{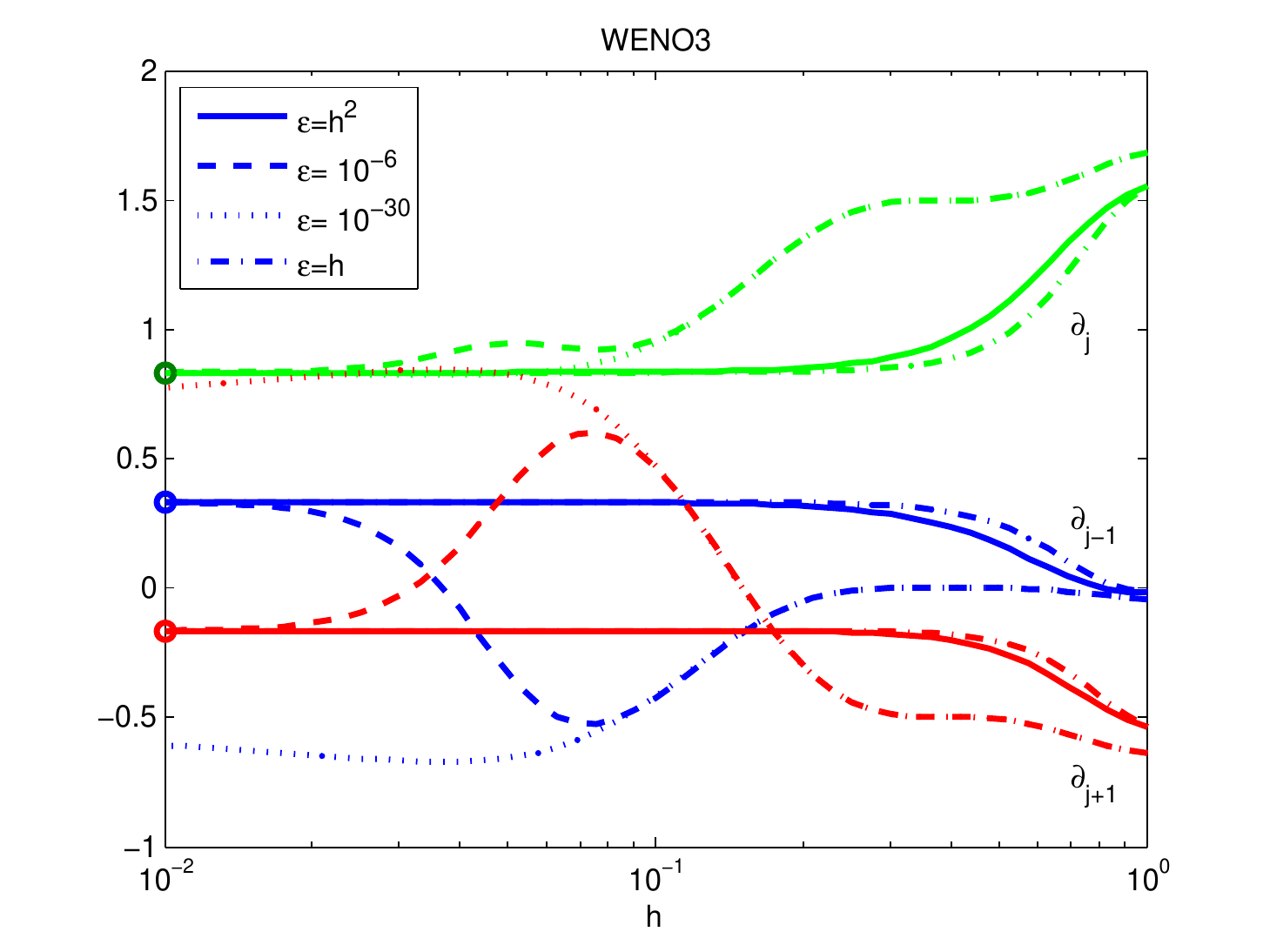}
\hfill
\includegraphics[width=0.49\linewidth]{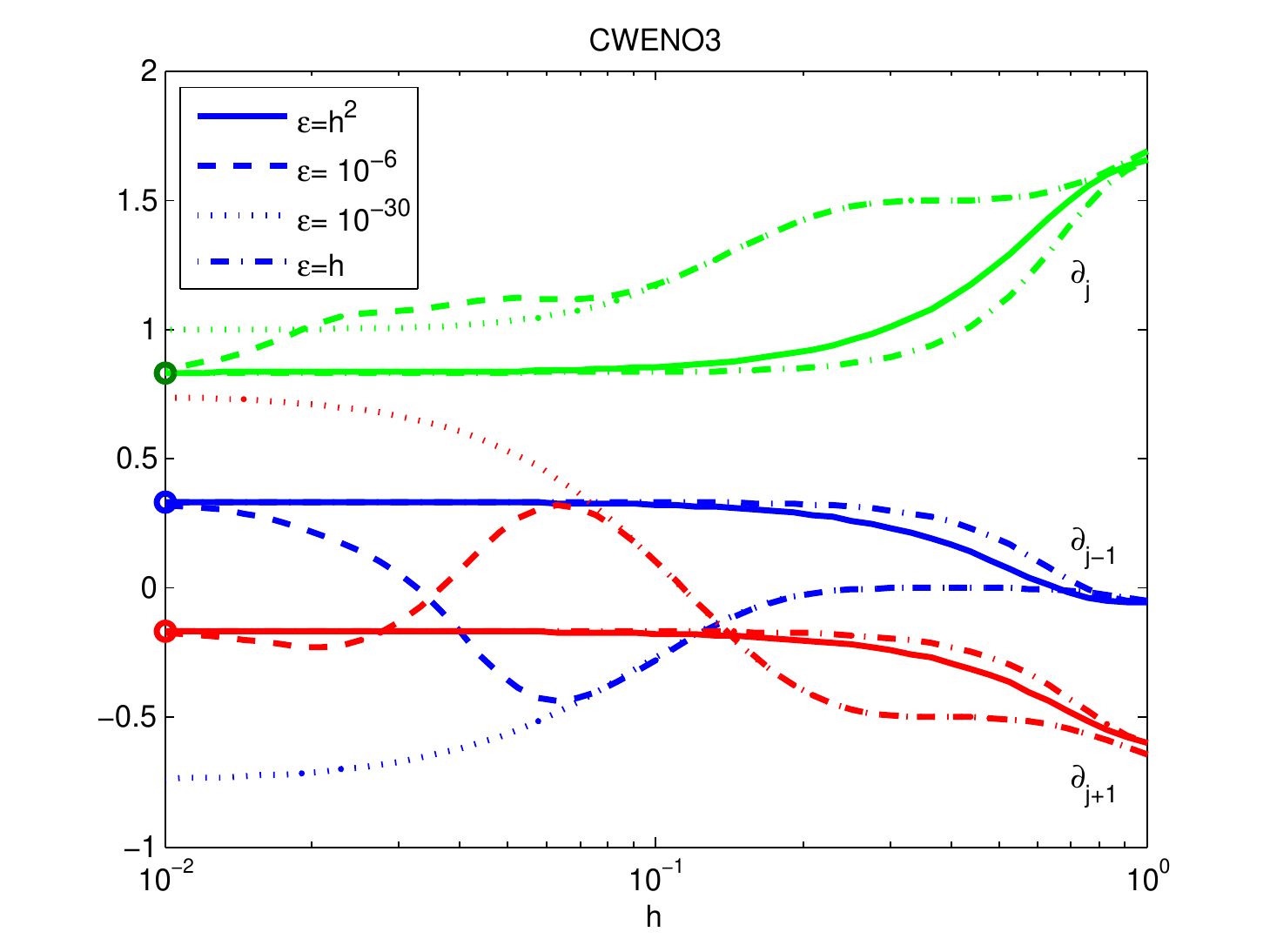}
\end{center}
\caption{Difference between the components of  $\nabla\mathcal{R}(\ca{u}_{j-1}, \ca{u}_{j},  \ca{u}_{j+1})$ and of $\nabla\mathcal{P}_2(\ca{u}_{j-1}, \ca{u}_{j},  \ca{u}_{j+1})$ as a function of the uniform mesh size $h$ for different choices of $\epsilon$. WENO3 (left) and CWENO3 (right). }
\label{fig:derivateDellaRic}
\end{figure}

\subsection{Numerical derivative and linear transport} 
\label{ssec:numerder}
In this set of tests we investigate the effects of the choice of $\epsilon$ 
 in a numerical scheme for the linear transport equation. 
Recall that, for $u_t+u_x=0$, when using upwind numerical fluxes, the semidiscrete scheme  \eqref{e:semischeme} boils down to 
\[
\frac{\D}{\D t}\ca{U}_j= - \frac{1}{h_j}\left( U^-_{j+1/2}- U^-_{j-1/2}\right) .
\]

This system of ODE is discretized with the third order, three stages SSPRK, see \cite{Gottlieb:2001}.
For these tests, a non uniform grid obtained repeating groups of four cells of size $ h, \, h/2, \, h/4, \, h/4 $ was generated.
We point out that cells of size $h$ have a neighbourhood of type  $\alpha,\beta,\gamma,\delta = 1/4, \, 1/4, \, 1/2, \, 1/4,$
 cells of size $h/2$ have  a neighbourhood with  $\alpha,\beta,\gamma,\delta = 1/2, \, 2, \, 1/2, \, 1/2,$ while
 cells of size $h/4$ have   $\alpha,\beta,\gamma,\delta = 4, \, 2, \, 1, \, 4$ or
 $\alpha,\beta,\gamma,\delta = 2, \, 1, \, 4, \, 2.$

We recall that, if the numerical flux function $\mathcal{F}$ appearing in \eqref{e:fluxes} is consistent and Lipschitz continuous, the reconstructions are third order accurate and the ODEs
\eqref{e:semischeme} are discretized using a third order accurate
method, then the numerical finite volume scheme is third order accurate in time as well in space, see \cite[\S 17]{LeVeque99}.

These tests compute also  the spatial discretization error 
\[
\big( u(x_{j+1/2})- u(x_{j-1/2})\big)/h_j -
\big( U^-_{j+1/2}- U^-_{j-1/2}\big)/h_j,
\]
which is the finite volume error analogue of the finite difference truncation error
\(
u^\prime(x_j) - \big( U^-_{j+1/2}- U^-_{j-1/2}\big)/h_j
\)
studied in \cite{Arandiga}.

We integrate $u_t+u_x=0$ on the domain $[0,1]$ until $t=1$, with periodic boundary conditions and the smooth initial datum $u_0(x)=\sin(2\pi x -\sin(2\pi x)/2\pi)$.
The maximum norm  error for the numerical derivative of $u_0(x)$  on this grid and the 1-norm  error at final time in the linear transport test were recorded.
The results for $ \epsilon= h^2$ are shown in Table \ref{tab:stupida:WENO3} (WENO3) and Table \ref{tab:stupida:CWENO} (CWENO3). Uniform  and quasi-regular grids  showed the expected convergence rates
 (\eqref{eq:gamma} and Remarks \ref{rem:uniform:WENO} and  \ref{rem:uniform:CWENO})  in both the spatial discretization error and the linear transport test.
Random and $\alpha\beta\gamma\delta$-grids show irregular and degraded convergence rates for the spatial discretization error, but the  order of convergence is $3$ in the linear transport test.

\begin{table}
\begin{center}
Maximum norm  error on the numerical derivative on $u_0(x)$
\vspace{5mm}
\begin{tabular}{|r|ll|ll|ll|ll|}
\hline
$N$ &  \multicolumn{2}{c|}{uniform} & \multicolumn{2}{c|}{quasi-unif} & \multicolumn{2}{c|}{random}& \multicolumn{2}{c|}{[1,1/2,1/4,1/4,1]}\\
& error & rate & error & rate& error & rate & error & rate \\
\hline
   20 &  6.30e-01  &      &  6.20e-01 &       &  7.20e-01 &       &  3.90e-04 &       \\
   40 &  3.06e-01  & 1.04 &  2.58e-01 &  1.26 &  2.83e-01 &  1.34 &  1.56e-04 &  1.32 \\
   80 & 5.46e-02   & 2.49 &  5.93e-02 &  2.12 &  5.21e-02 &  2.44 &  4.27e-05 &  1.87 \\
   160 &  7.52e-03  & 2.86  &  9.00e-03 &  2.72 &  8.37e-03 &  2.63 &  1.09e-05 &  1.97 \\
   320 &   9.78e-04 & 2.94  &  1.18e-03 &  2.92 &  1.34e-03 &  2.63 &  2.74e-06 &  1.99 \\
   640 &  1.23e-04 &  2.99  &  1.47e-04 &  3.00 &  4.03e-04 &  1.73 &  6.86e-07 &  2.00 \\
   1280 &  1.54e-05 & 3.00  &  1.83e-05 &  3.00 &  9.11e-05 &  2.14 &  1.71e-07 &  2.00 \\
   2560 &  1.92e-06 & 3.00  &  2.27e-06 &  3.00 &  2.34e-05 &  1.95 &  4.29e-08 &  2.00 \\
\hline
\end{tabular}
\end{center}
\begin{center}
1-norm  error on the linear transport at $t=1$
\begin{tabular}{|r|ll|ll|ll|ll|}
\hline
$N$ &  \multicolumn{2}{c|}{uniform} & \multicolumn{2}{c|}{quasi-unif} & \multicolumn{2}{c|}{random}& \multicolumn{2}{c|}{[1,1/2,1/4,1/4,1]}\\
& error & rate & error & rate& error & rate & error & rate \\
\hline
20  & 8.20e-02 &   &9.69e-02 &       &  8.31e-02 &       &  4.10e-02 &       \\
40 & 2.75e-02&  1.57  &4.20e-02 &  1.20 &  2.76e-02 &  1.58 &  8.33e-03 &  2.30 \\
80 &4.95e-03 & 2.48  &9.47e-03 &  2.15 &  5.01e-03 &  2.46 &  1.25e-03 &  2.73 \\
160 & 7.35e-04 & 2.75  &1.56e-03 &  2.60 &  7.40e-04 &  2.75 &  1.61e-04 &  2.96 \\
320&  9.36e-05 &2.97  &2.10e-04 &  2.88 &  9.40e-05 &  2.97 &  1.94e-05 &  3.06 \\
640&1.14e-05 & 3.04  &2.57e-05 &  3.03 &  1.14e-05 &  3.03 &  2.38e-06 &  3.03 \\
1280 &1.41e-06 &3.01  &3.16e-06 &  3.02 &  1.41e-06 &  3.01 &  2.97e-07 &  3.00 \\
2560 &1.76e-07 & 3.00 &3.94e-07 &  3.00 &  1.76e-07 &  3.00 &  3.71e-08 &  3.00 \\
\hline
\end{tabular}
\end{center}
\caption{ Discrete  maximum norm  error on numerical derivative (top) and discrete 1-norm  error in 
linear transport equation with WENO3. $\epsilon=h^2$,  $u_0(x)=\sin\left(2\pi x-\sin(2\pi x)/2\pi\right).$}
\label{tab:stupida:WENO3}
\end{table}

\begin{table}
\begin{center}
Maximum norm   error on the numerical derivative  on $u_0(x)$
\vspace{5mm}
\begin{tabular}{|r|ll|ll|ll|ll|}
\hline
$N $ &  \multicolumn{2}{c|}{uniform} & \multicolumn{2}{c|}{quasi-unif} & \multicolumn{2}{c|}{random}& \multicolumn{2}{c|}{[1,1/2,1/4,1/4,1]}\\
& error & rate & error & rate& error & rate & error & rate \\
\hline 
   20 &  4.83e-01   &  &  8.77e-01 &       &  6.55e-01 &       &  5.13e-04 &       \\
   40 &  3.14e-01  & 0.62  &  2.62e-01 &  1.74 &  3.06e-01 &  1.09 &  1.64e-04 &  1.64 \\
   80 &   5.05e-02 & 2.64  &  5.48e-02 &  2.26 &  7.18e-02 &  2.09 &  4.32e-05 &  1.92 \\
   160 &  5.46e-03 & 3.21   &  6.59e-03 &  3.06 &  8.61e-03 &  3.06 &  1.09e-05 &  1.98 \\
   320 &  5.98e-04 & 3.19  &  7.58e-04 &  3.12 &  1.51e-03 &  2.51 &  2.74e-06 &  1.99 \\
   640  &  7.10e-05 & 3.07  &  8.83e-05 &  3.10 &  2.77e-04 &  2.44 &  6.86e-07 &  2.00 \\
   1280 &  8.73e-06 & 3.02  &  1.06e-05 &  3.05 &  6.43e-05 &  2.10 &  1.72e-07 &  2.00 \\
   2560 &  1.09e-06 & 3.01 &  1.31e-06 &  3.02 &  1.61e-05 &  1.99 &  4.29e-08 &  2.00 \\
\hline
\end{tabular}
\end{center}
\begin{center}
 1-norm  error on the linear transport at $t=1$
\begin{tabular}{|r|ll|ll|ll|ll|}
\hline
$N$  &  \multicolumn{2}{c|}{uniform}  & \multicolumn{2}{c|}{quasi-unif} & \multicolumn{2}{c|}{random}& \multicolumn{2}{c|}{[1,1/2,1/4,1/4,1]}\\
& error & rate & error & rate& error & rate & error & rate \\
\hline
20  &  8.22e-02 &   & 9.91e-02 &       &  8.27e-02 &       &  3.90e-02 &       \\
40 &2.40e-02 & 1.78  &4.02e-02 &  1.30 &  2.41e-02 &  1.77 &  6.55e-03 &  2.57 \\
80 & 3.57e-03 & 2.75  &7.68e-03 &  2.38 &  3.60e-03 &   2.74 &  8.54e-04 &  2.94 \\
160 & 4.57e-04 & 2.97  &1.05e-03 &  2.87 &  4.64e-04 &  2.95 &  9.91e-05 &  3.11 \\
320& 5.36e-05&  3.09 &1.25e-04 &  3.06 &  5.45e-05 &  3.09 &  1.07e-05 &  3.20 \\
640&6.35e-06 &  3.08  &1.45e-05 &  3.10 &  6.49e-06 &  3.07 &  1.25e-06 &  3.11 \\
1280 & 7.80e-07 & 3.02  &1.76e-06 &  3.04 &  7.97e-07 &  3.02 &  1.52e-07 &  3.03 \\
2560  &9.72e-08 & 3.00  &2.18e-07 &  3.01  &  9.94e-08 &  3.00 &  1.89e-08 &  3.01 \\
\hline
\end{tabular}
\end{center}
\caption{ Discrete maximum norm   error on numerical derivative (top) and discrete  1-norm error in 
linear transport equation with CWENO3. $\epsilon=h^2$,  $u_0(x)=\sin\left(2\pi x-\sin(2\pi x)/2\pi\right).$}
\label{tab:stupida:CWENO}
\end{table}

Having shown that random and $\alpha\beta\gamma\delta$-grids are the most troublesome, but that in the linear transport test the theoretical order of convergence is easily reached with an $h$-dependent $\epsilon$, next we compare the different combinations of WENO3 and CWENO3 with $\epsilon=h^2$ and $\epsilon=h$ on linear transport test for smooth and discontinuous data.

\begin{figure}
\begin{center}
\includegraphics[width=0.49 \linewidth]{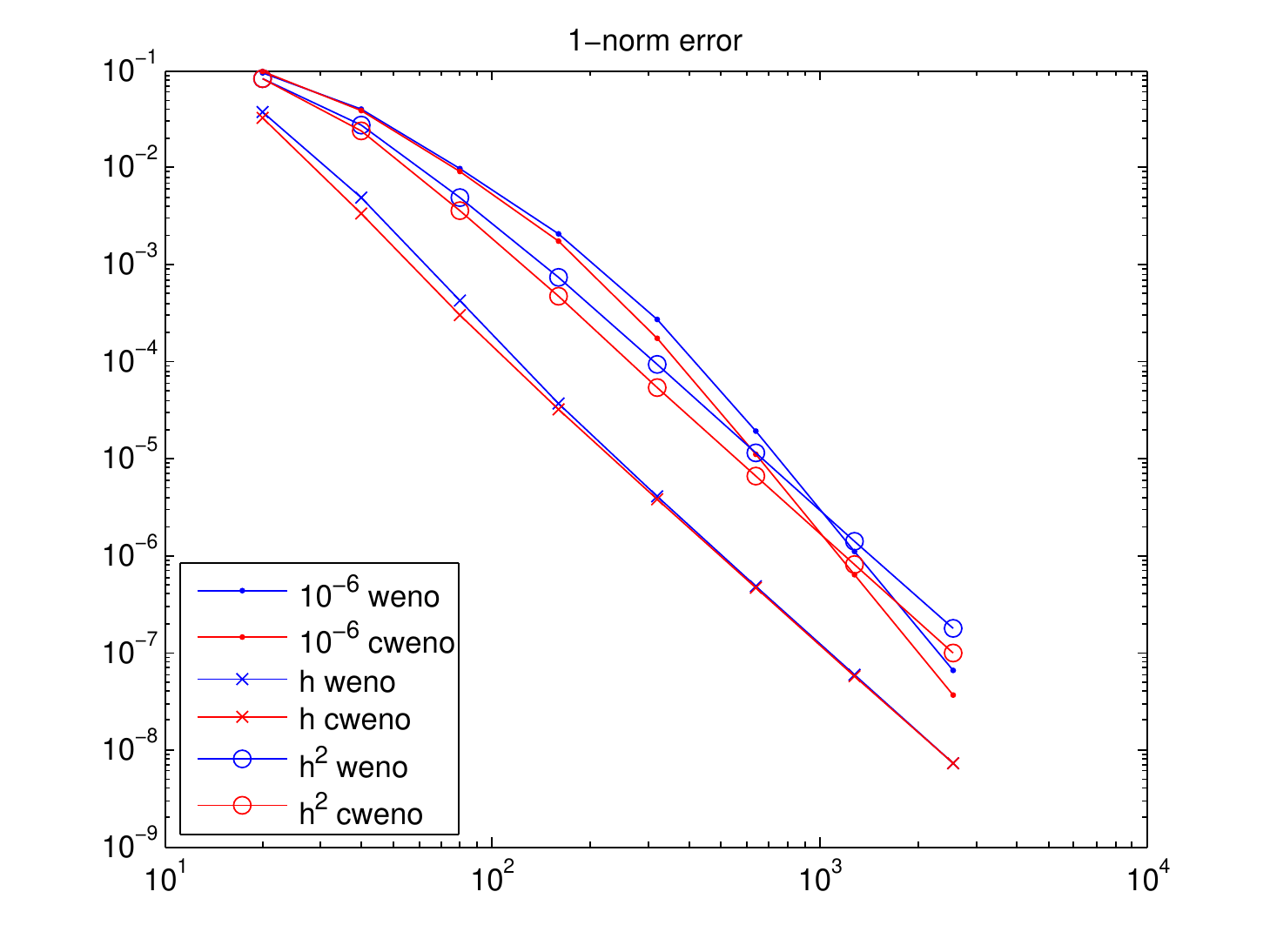}
\hfill
\includegraphics[width=0.49 \linewidth]{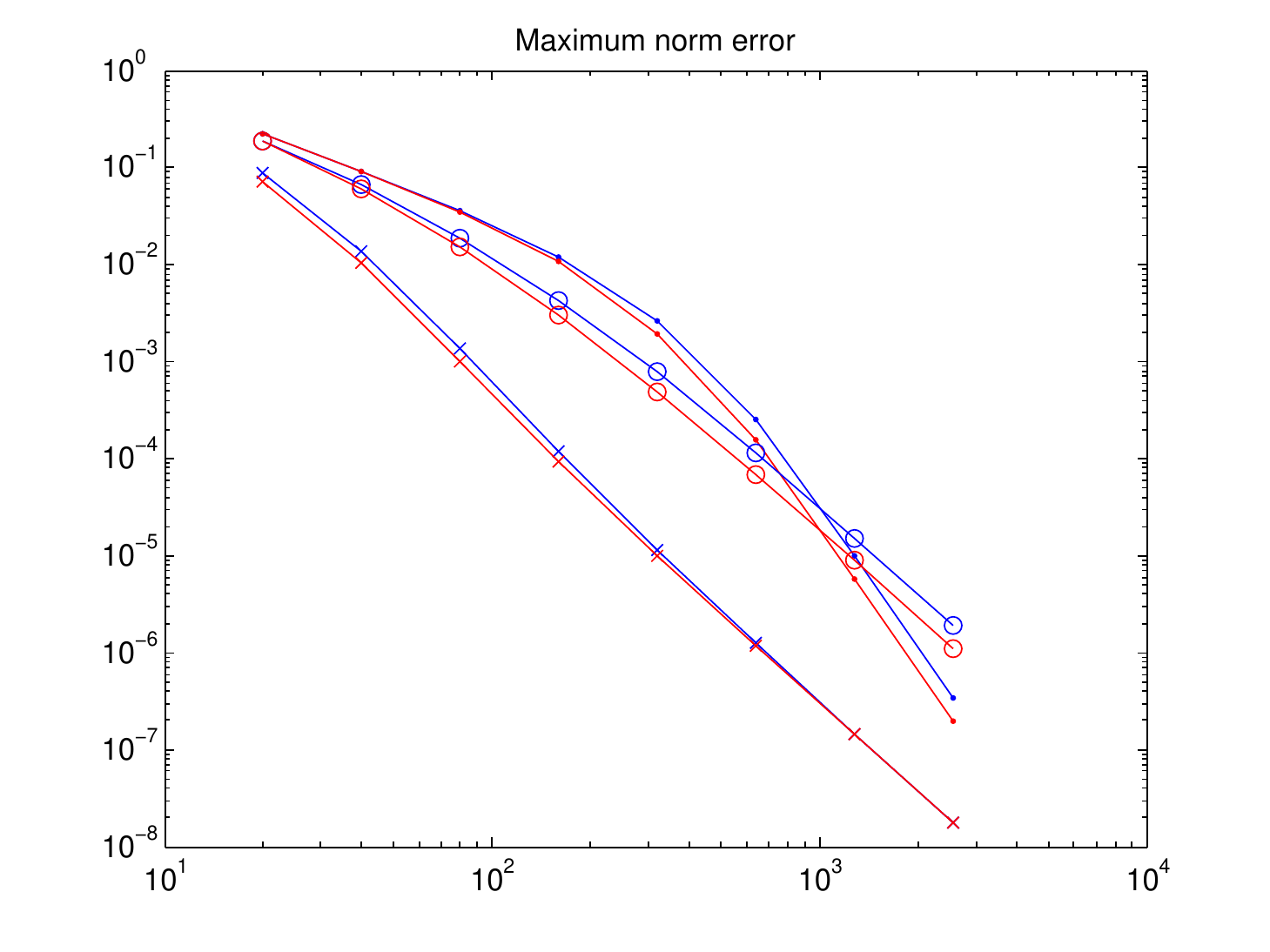}
\end{center}
\caption{Linear transport of smooth data on random grids. Discrete 1-norm  error (left) and discrete  maximum norm (right).}
\label{fig:lintra:smooth}
\end{figure}

 The error at final time in both the 1-norm  and the  maximum norm  are reported in Figure \ref{fig:lintra:smooth}. On all grid types (only random ones are shown) and for both norms, the choice $\epsilon=10^{-6}$ gives the biggest errors, $\epsilon=h^2$ is slightly better, while $\epsilon=h$ yields errors that are lower by about a factor of $2$. In all cases, the CWENO3 reconstruction yields slightly lower errors than the WENO3 reconstruction.

\begin{figure}
\begin{center}
\includegraphics[width=0.8\linewidth,height=0.3\linewidth]{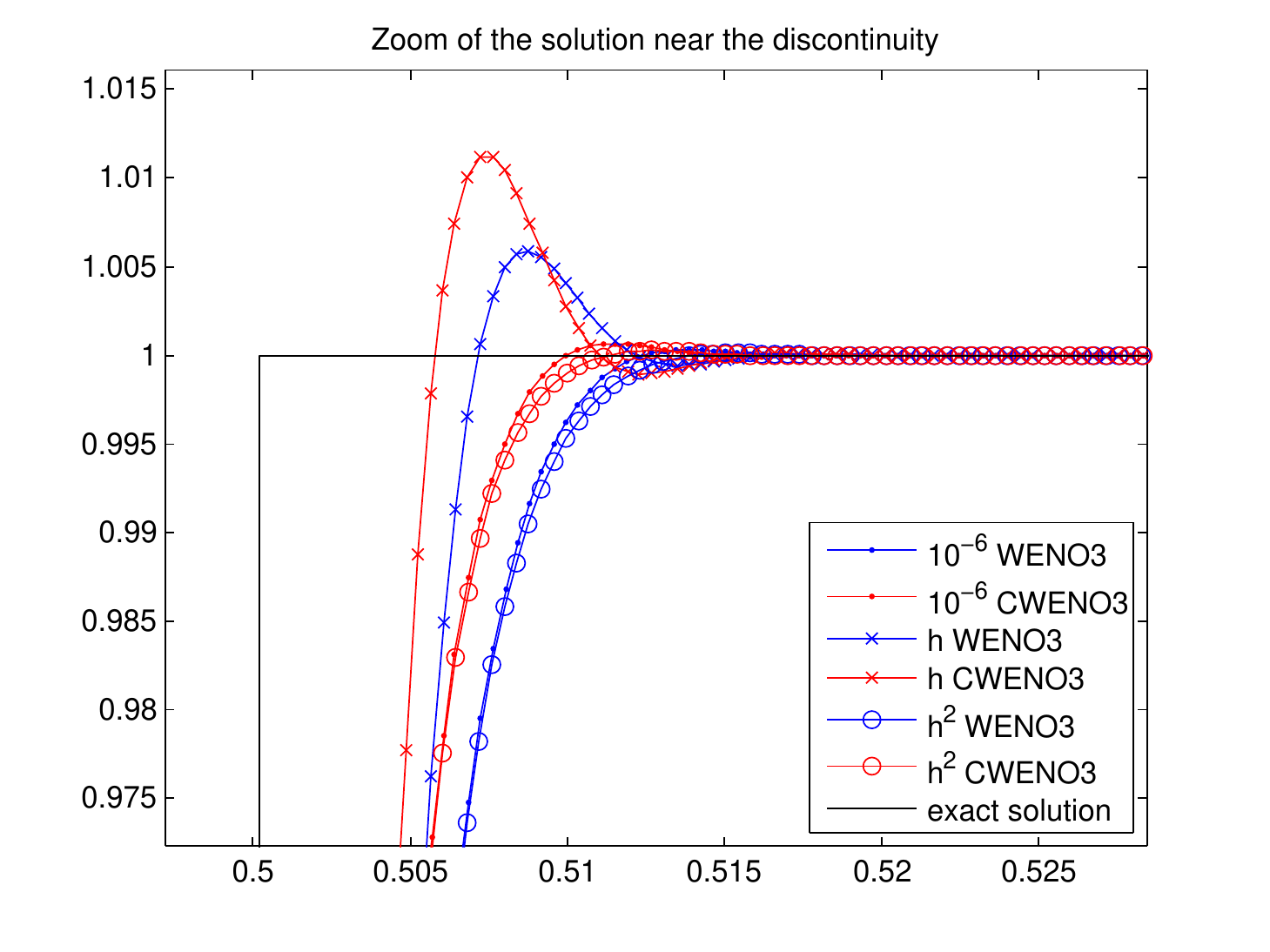}
\\
\includegraphics[width=0.49 \linewidth]{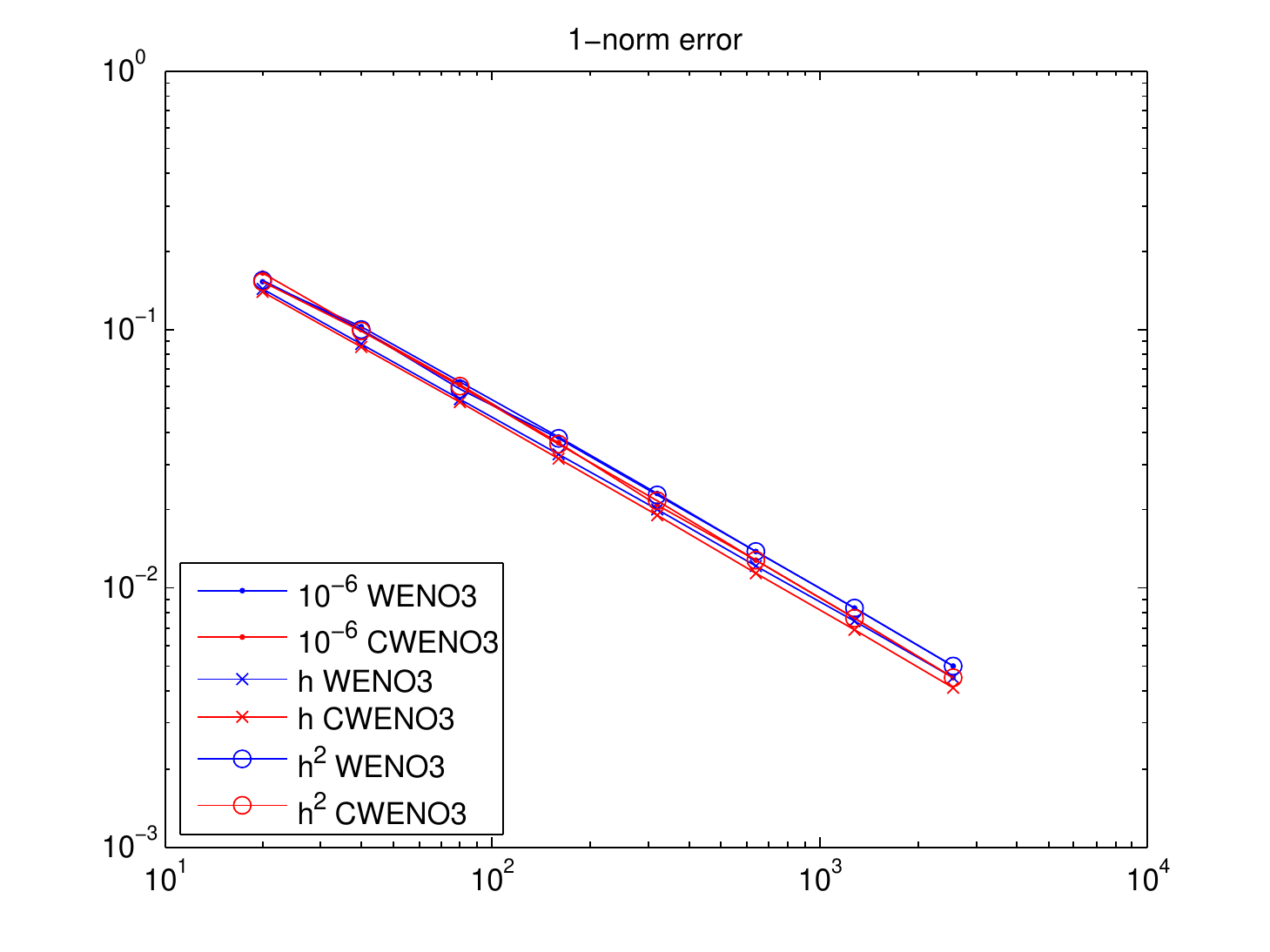}
\hfill
\includegraphics[width=0.49 \linewidth]{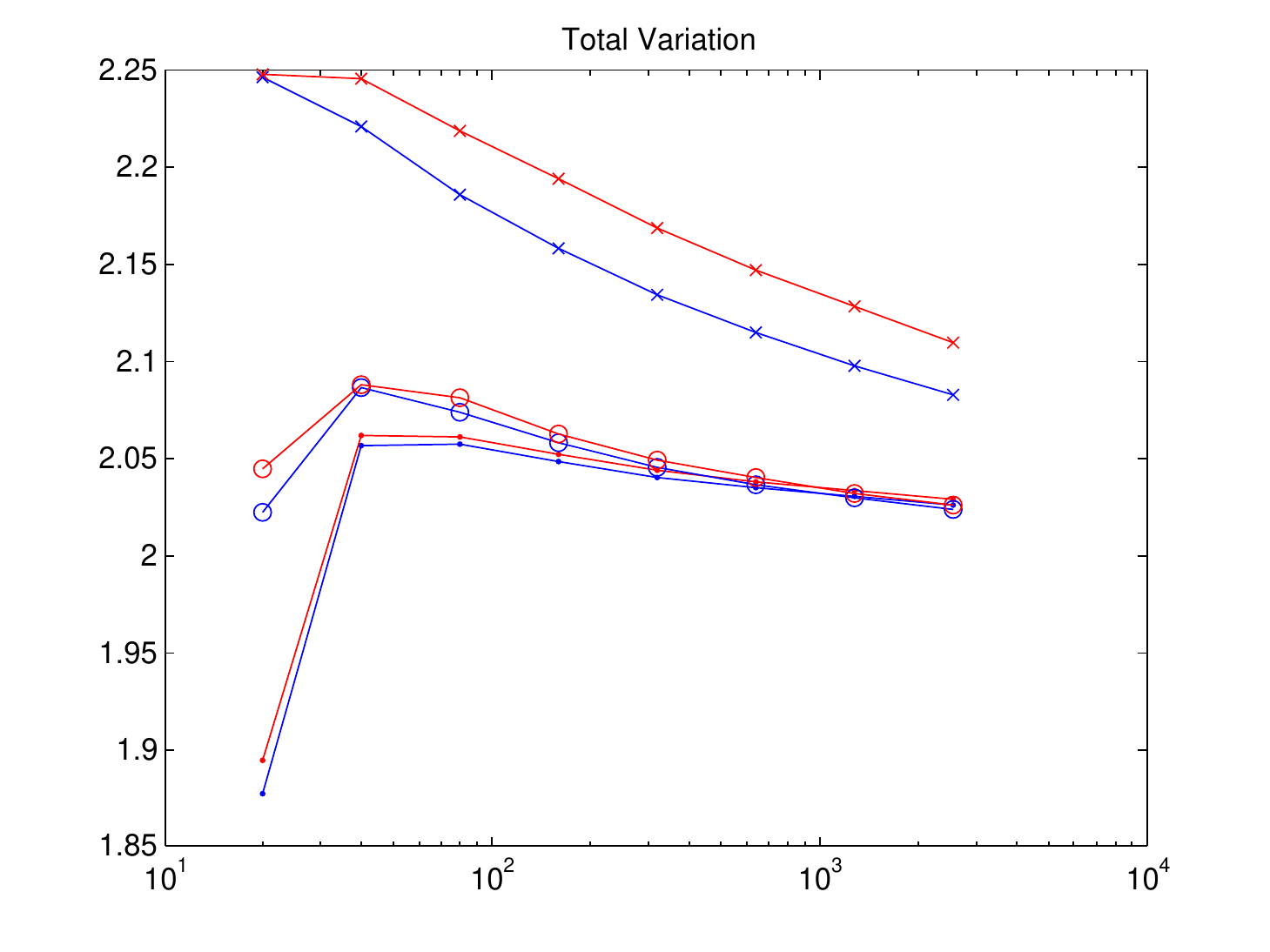}
\end{center}
\caption{ Linear transport of discontinuous data on random grids. Zoom of the solution close to the discontinuity (top),
the discrete 1-norm and the total variation, respectively (bottom).}
\label{fig:lintra:disc}
\end{figure}

Next we repeat the previous test with the square wave initial datum $u_0(x)=\chi_{[1/2,1]}(x)$, where $\chi$ denotes the characteristic function. At final time we computed the 1-norm of the error and the total variation: the results are reported in Figure \ref{fig:lintra:disc}. The errors in  1-norm  are much closer to each other than in the smooth case, with $\epsilon=h$ still being slightly better than the other choices. The test on the total variation shows that the increased resolution in the smooth case is obtained when the reconstructions stay closer to the central one and in fact the choices giving the lower errors in the previous test (i.e. CWENO3 and $\epsilon=h$) produce more total variations than the other ones. In any case the total variation is diminishing under grid refinement.

\begin{figure}
\begin{center}
\includegraphics[width=0.49\linewidth]{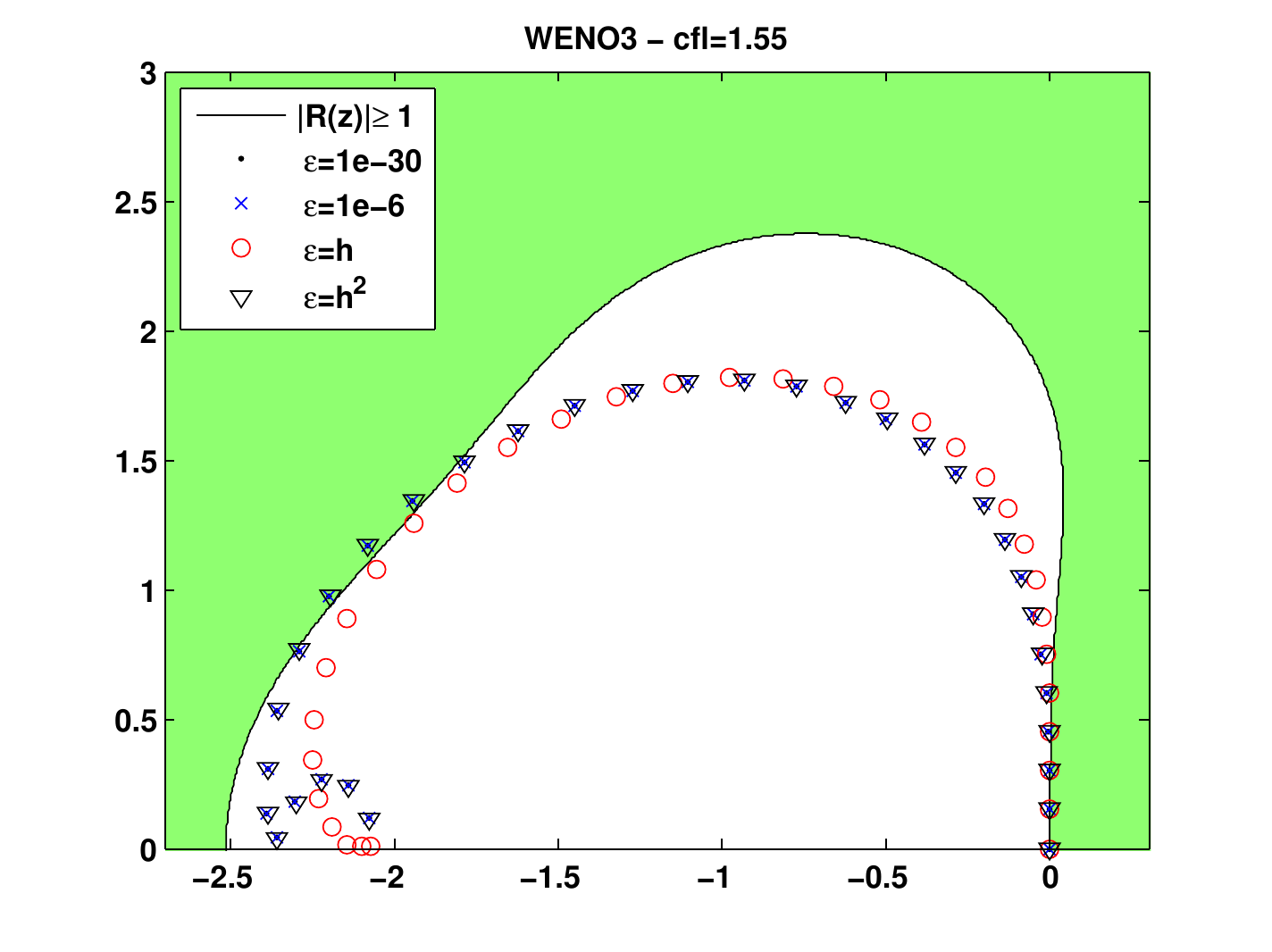}
\hfill
\includegraphics[width=0.49\linewidth]{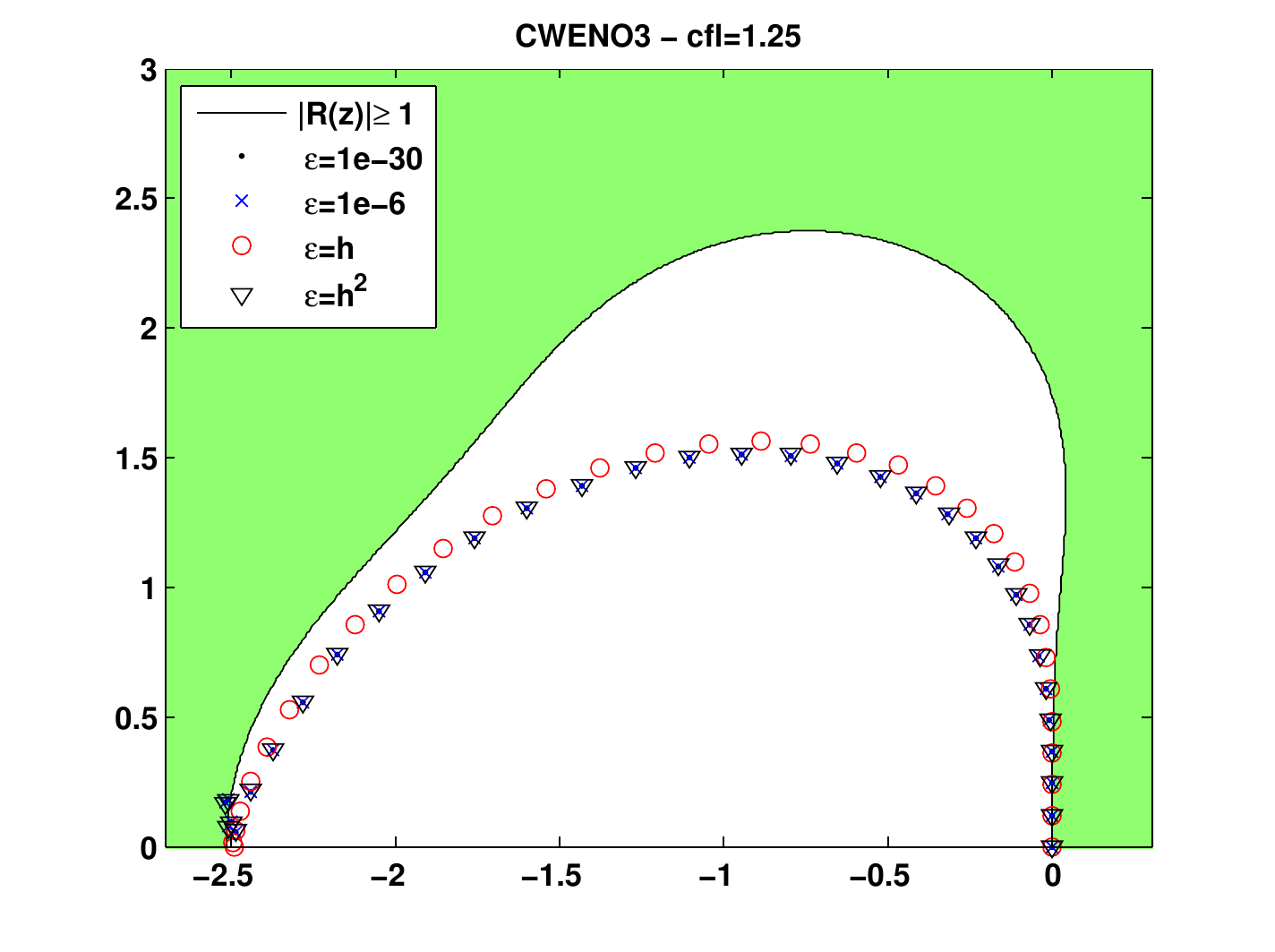}
\end{center}
\caption{Stability region of SSP-RK3 and spectrum of first derivative operator computed with upwind flux and WENO3 reconstructions (left) and CWENO3 reconstruction (right).}
\label{fig:stability}
\end{figure}

\subsection{Stability}
In Figure \ref{fig:stability} we compare the stability region of the third order SSPRK used for time advancement and the spectrum of the operator that computes numerically the first order derivative in the approximation of the linear transport equation $u_t+u_x=0$. The spectrum represented in the figure is the spectrum of the linearization of the nonlinear operator in the Fourier basis, which represents an analogue of the classical Von Neumann analysis that is applied to linear schemes.
In particular, on a grid of $65$ cells, we compute column-wise a $65\times65$ matrix $M$ as follows: for each column, we set up initial data coinciding with one of the real-valued Fourier basis with frequency between 0 and 32, compute the boundary extrapolated data, the numerical fluxes, the vector of differences $u^-_{j+1/2}-u^-_{j-1/2}$ and decompose this latter quantity along the real-valued Fourier basis. The eigenvalues of $M$ are shown in the picture, for WENO3 (left) and CWENO3 (right) and different choices of $\epsilon$. Linear stability is achieved if all the eigenvalues stay within the stability region of the Runge-Kutta scheme, i.e. in the white region of the plot. By symmetry, only the positive imaginary half-plane is shown. The CFL number was chosen so that the eigenvalues are very close to the boundary of the absolute stability region, showing that, for both WENO3 and CWENO3,  the choice $\epsilon=h$ has a slight stability advantage over the other three.
We also observe that the spectrum obtained with the CWENO3 reconstruction is less elongated in the imaginary axis direction than the one obtained with WENO3, as indicated by the different location at which the spectrum touches the boundary of the stability region of the Runge-Kutta scheme.

\subsection{Nonlinear conservation and balance laws}
\paragraph{Interplay with h-adaptive schemes}
The use of non uniform meshes is very important for non-linear conservation laws, since in the area around shocks, the truncation error can not be better than $\mathcal{O}(h)$ irrespectively of the order of the scheme employed and thus grid refinement is the only available tool to reduce the computational error in that area. The results of this paper are relevant for moving mesh methods  (quasi-uniform grids) and for locally refined $\alpha\beta\gamma\delta$-grids. Here we present tests in the latter setting, using the third order generalization of the h-adaptive scheme presented in \cite{PS:entropy},  which was analized in \cite{CRS}. The scheme employs a single non-uniform mesh, built from an initial uniform one by locally and recursively splitting in two the troubled cells according to some error indicator. As in \cite{PS:entropy}, the numerical entropy production is employed as an error indicator, but we believe that this result are rather independent on the details of the adaptive strategy, since in any case the numerical scheme would have to deal with nearby cells whose size ratio is a power of two and are thus not quasi-uniform.

\begin{figure}
\begin{center}
\includegraphics[width=0.45\textwidth]{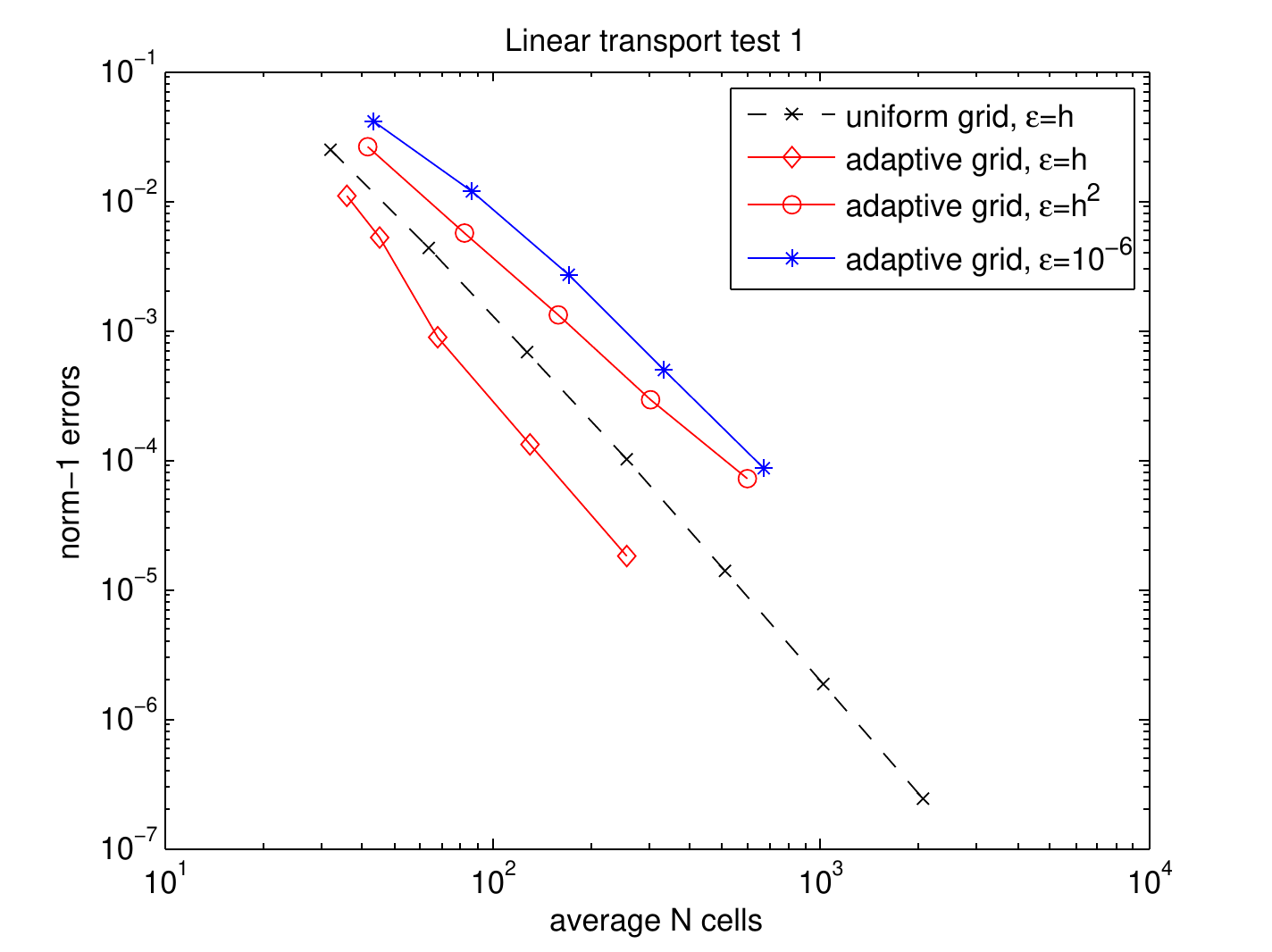}
\hfill
\includegraphics[width=0.45\textwidth]{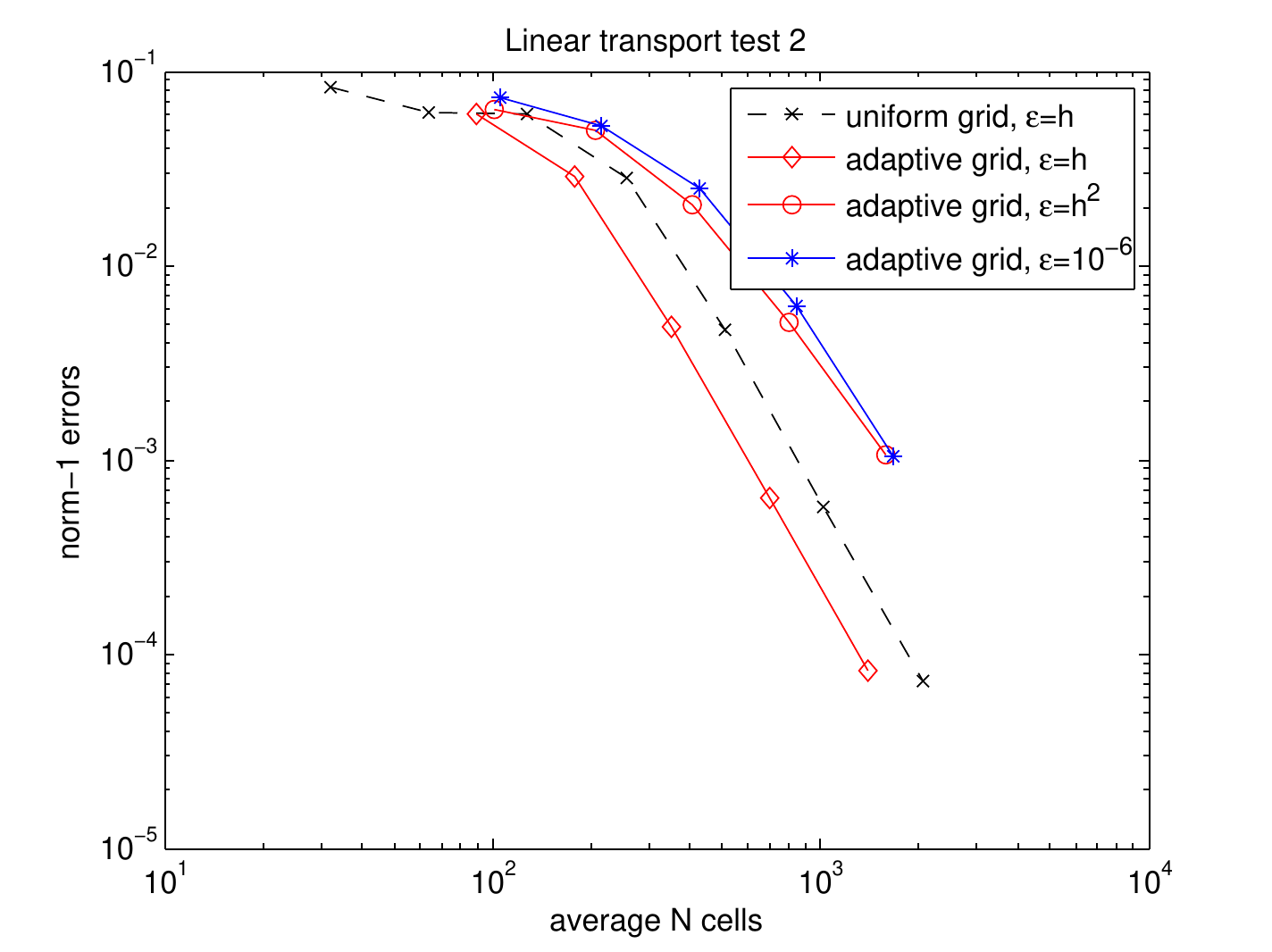}
\end{center}
\caption{Efficiency diagrams for $u_t+u_x=0$ with periodic boundary conditions and smooth data. The initial datum is slowly varying on the left and quite oscillating on the right.}
\label{fig:adap:lintra}
\end{figure}

In Figure \ref{fig:adap:lintra} we present the results obtained integrating the linear transport equation with periodic boundary conditions on $[0,1]$ and initial datum $u_1(x)=\sin(\pi x-\sin(\pi x)/\pi)$
(left) and $u_2(x)=\sin(\pi x)+0.25 \sin(15\pi x)e^{-20x^2}$ (right). The grid is adaptively refined during the numerical integration, using cells of three (four) sizes respectively, thus with maximum grid ratio of $1/8$ or $1/16$. The  discrete 1-norm error is plotted against the average number of cells that were used during the computation, which is a proxy of the computational effort of the scheme. Since the flow is smooth, in this test there are no theoretical reasons why non-uniform grids would outperform uniform ones. However, the figure shows that choosing an $h$-dependent $\epsilon$ yields consistently lower errors and that the choice $\epsilon=h$ clearly outperforms the other two.

\begin{figure}
\begin{center}
\includegraphics[width=0.45\textwidth]{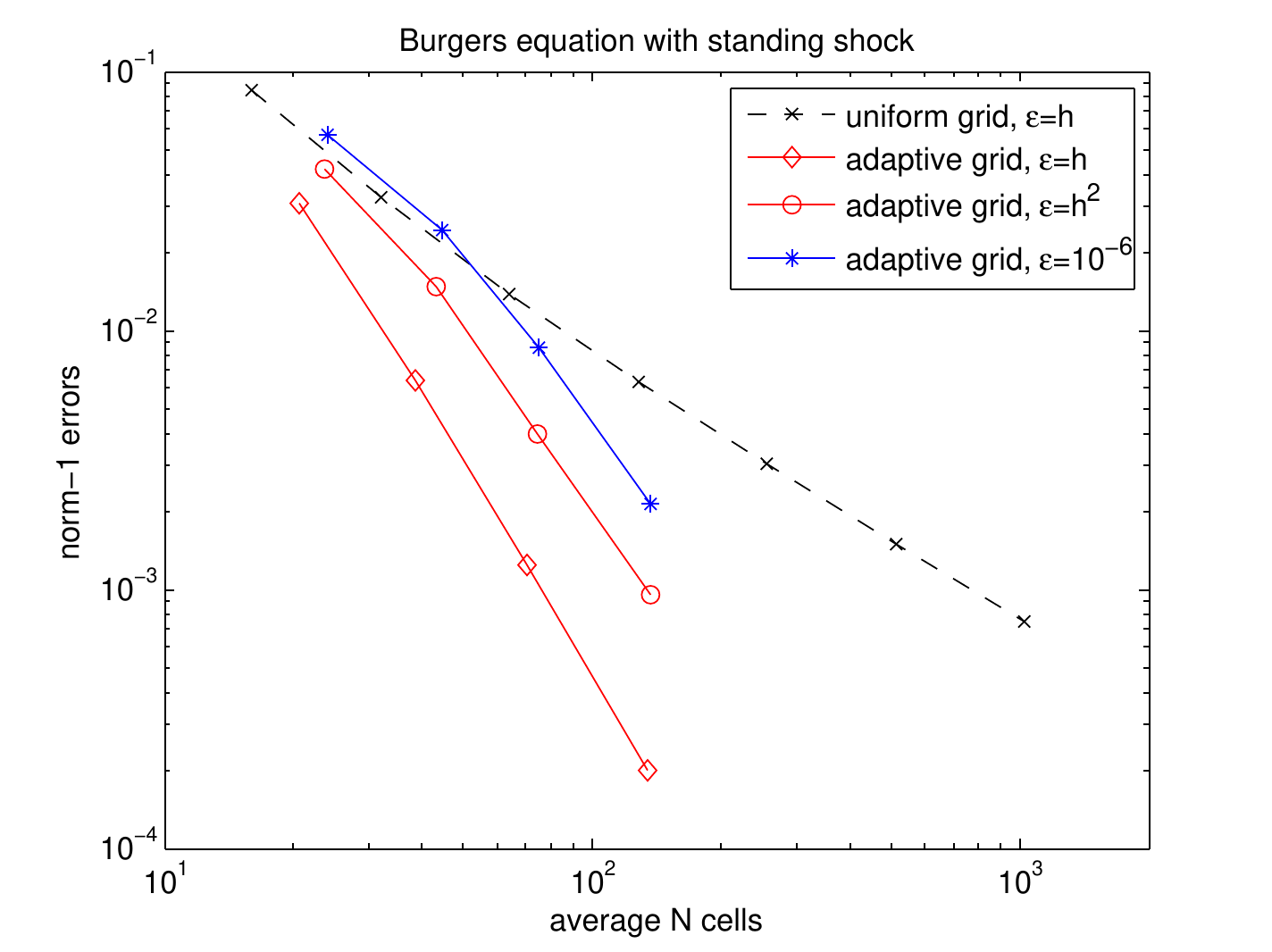}
\hfill
\includegraphics[width=0.45\textwidth]{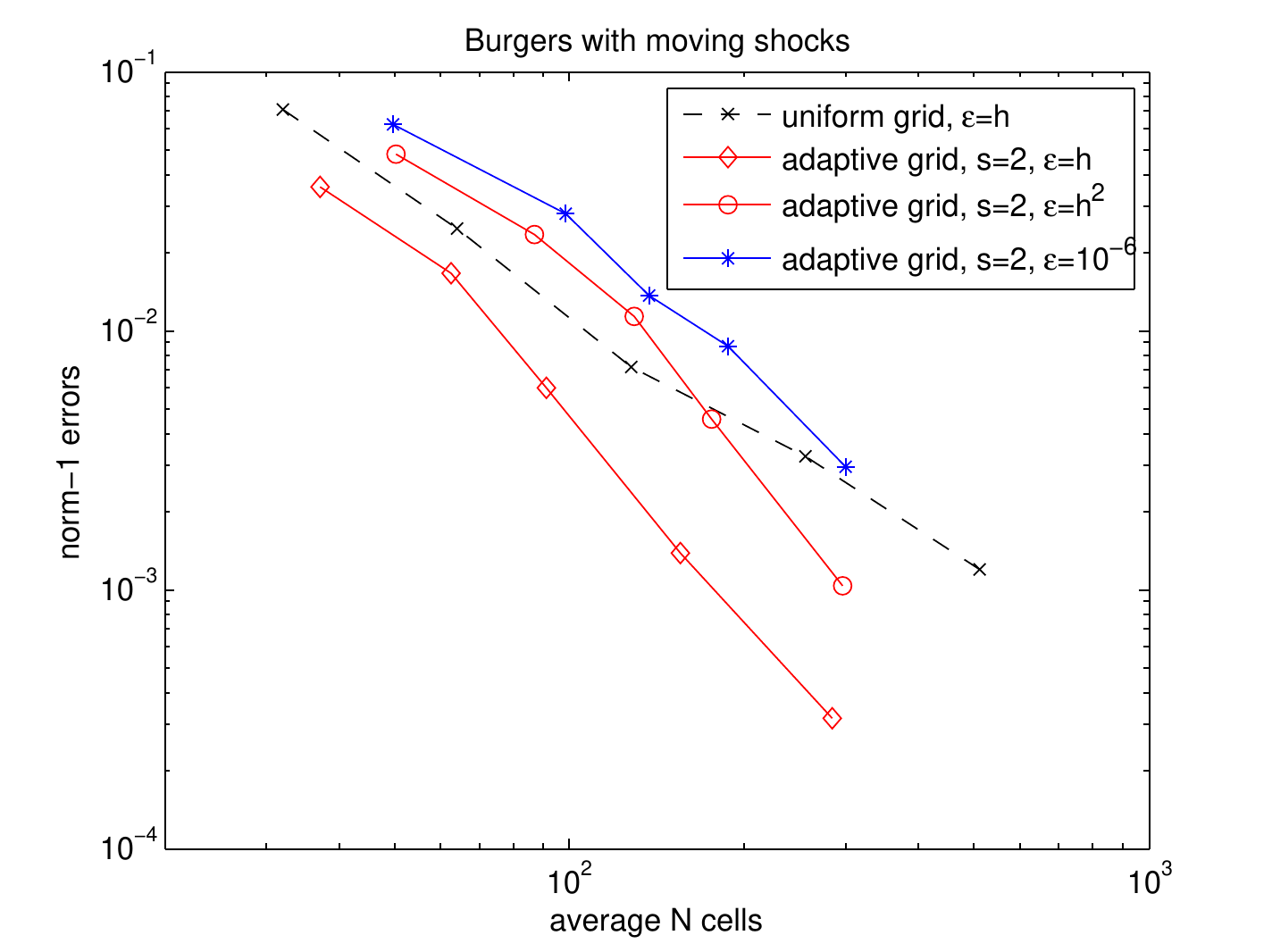}
\end{center}
\caption{Efficiency diagrams for the Burgers' equation. The initial datum on the left is $u_3$ and gives rise to a standing shock (final time $0.35$), while the one on the right is $u_4$, which is much more oscillating and gives rise to a complex solution structure with moving shocks (final time $0.45$).}
\label{fig:adap:burgers}
\end{figure}

In Figure \ref{fig:adap:burgers} we present the results obtained integrating the Burgers' equation $u_t+\tfrac12 (u^2)_x=0$ on the domain $[0,1]$ with the initial datum $u_3(x)=-\sin(\pi x)$ (left) and $u_4(x)=-\sin(\pi x)+0.2\sin(5.0\pi x)$ (right). The adaptive scheme clearly converges at a faster rate than the uniform grid one which is locked into first order behaviour by the presence of the shocks.
In the tests shown on the left, $u_3$ gives rise to a standing shock in the middle of the domain and we employed a coarse grid of $16\cdot2^k$ cells with $3+2k$ cell sizes (k=0\ldots3) as this is the choice that could recover third order convergence with respect to the average number of cells (see \cite{CRS}). In the test on the right, $u_4$ gives rise to a richer solution structure with two moving shocks converging towards the middle and four little waves where third order accuracy plays an important role. Here we employed $16\cdot2^k$ cells with $4+2k$ cell sizes (k=0,\ldots, 4).
In both cases, choosing an $h$-dependent $\epsilon$ yields consistently lower errors and the choice $\epsilon=h$ clearly outperforms the other two, especially in the more complex situation depicted on the right.

\begin{figure}
\begin{center}
\includegraphics[width=0.45\textwidth]{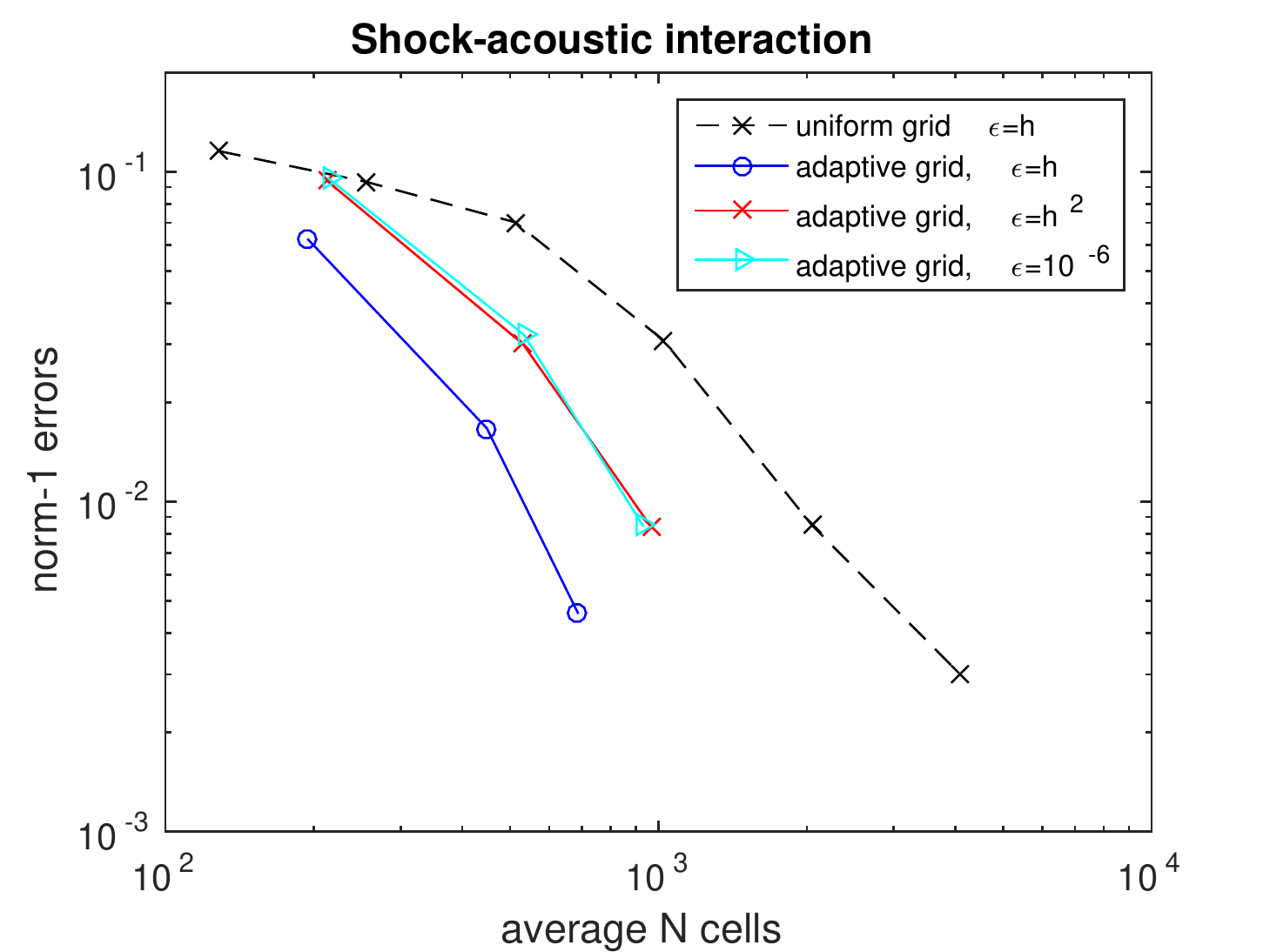}
\hfill
\includegraphics[width=0.45\textwidth]{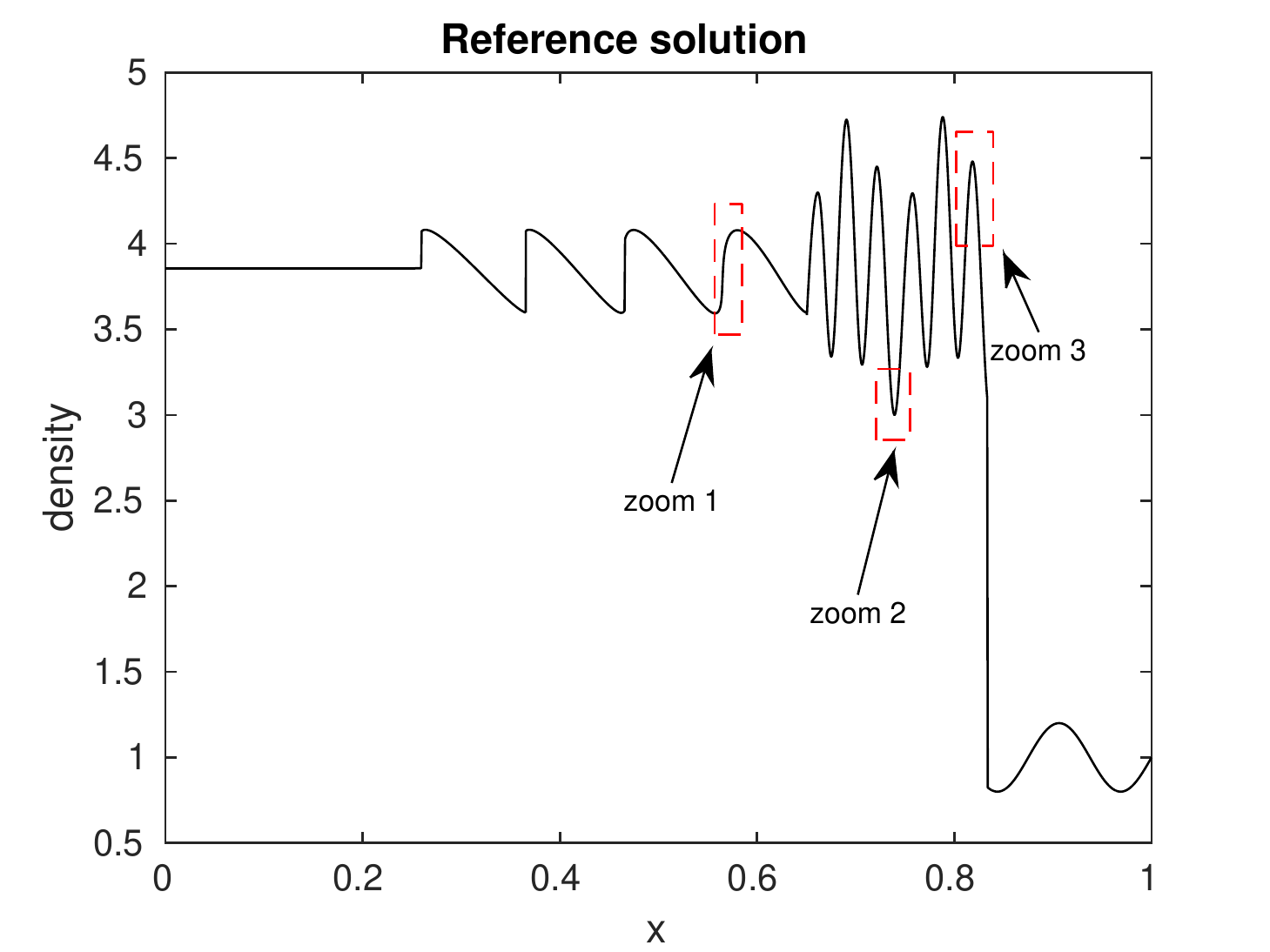}
\\[5mm]
\includegraphics[width=0.3\textwidth]{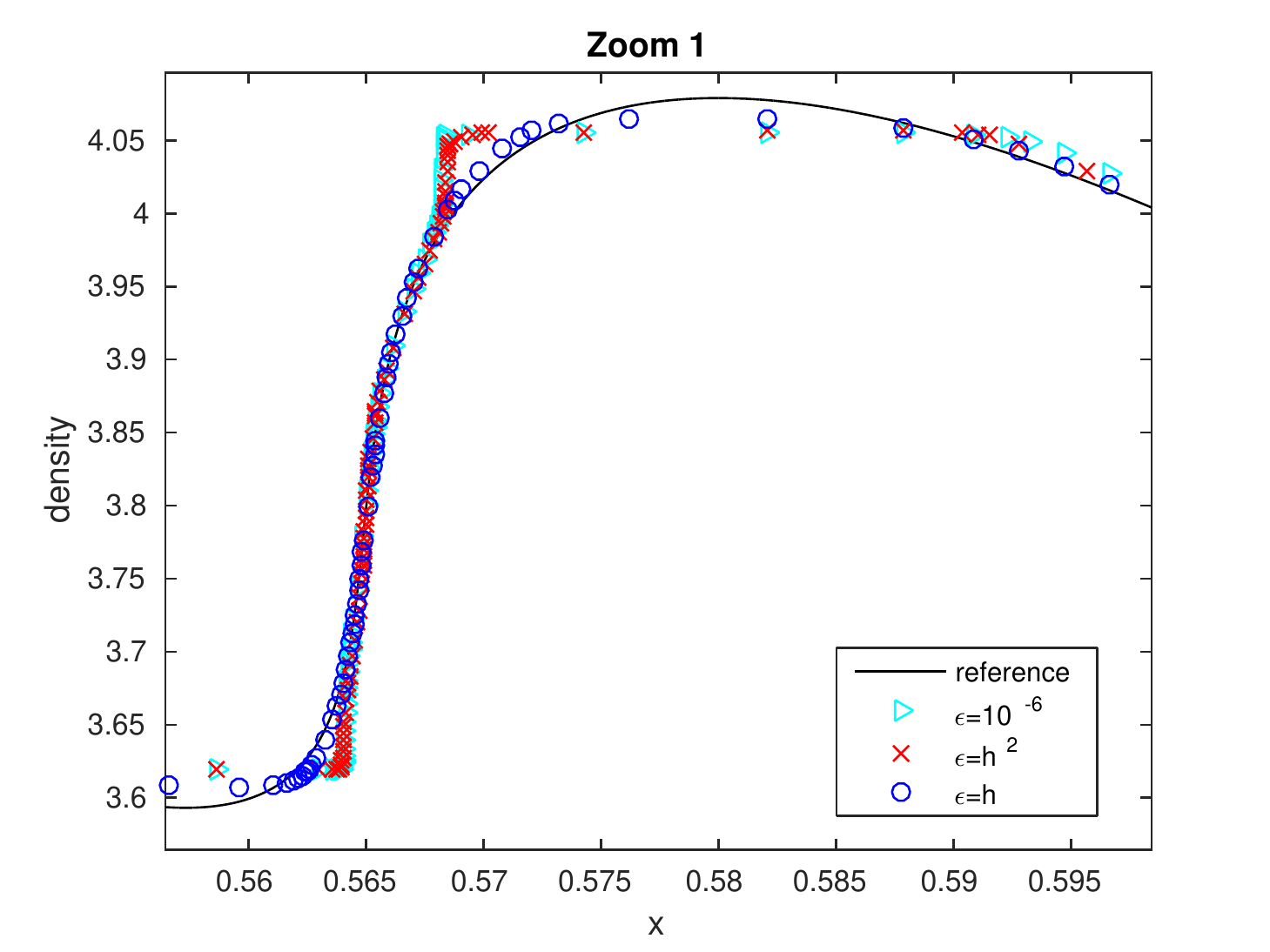}
\hfill
\includegraphics[width=0.3\textwidth]{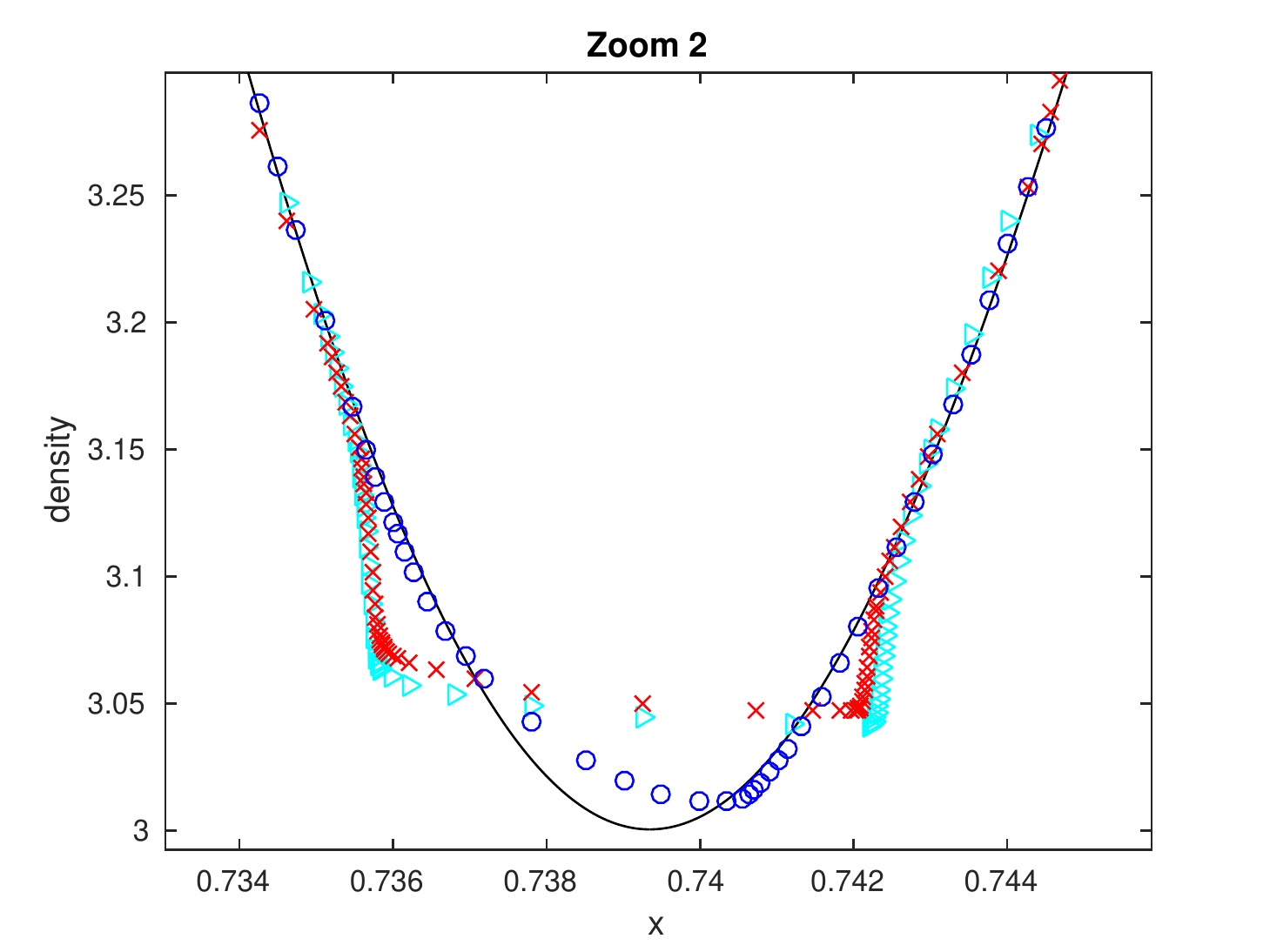}
\hfill
\includegraphics[width=0.3\textwidth]{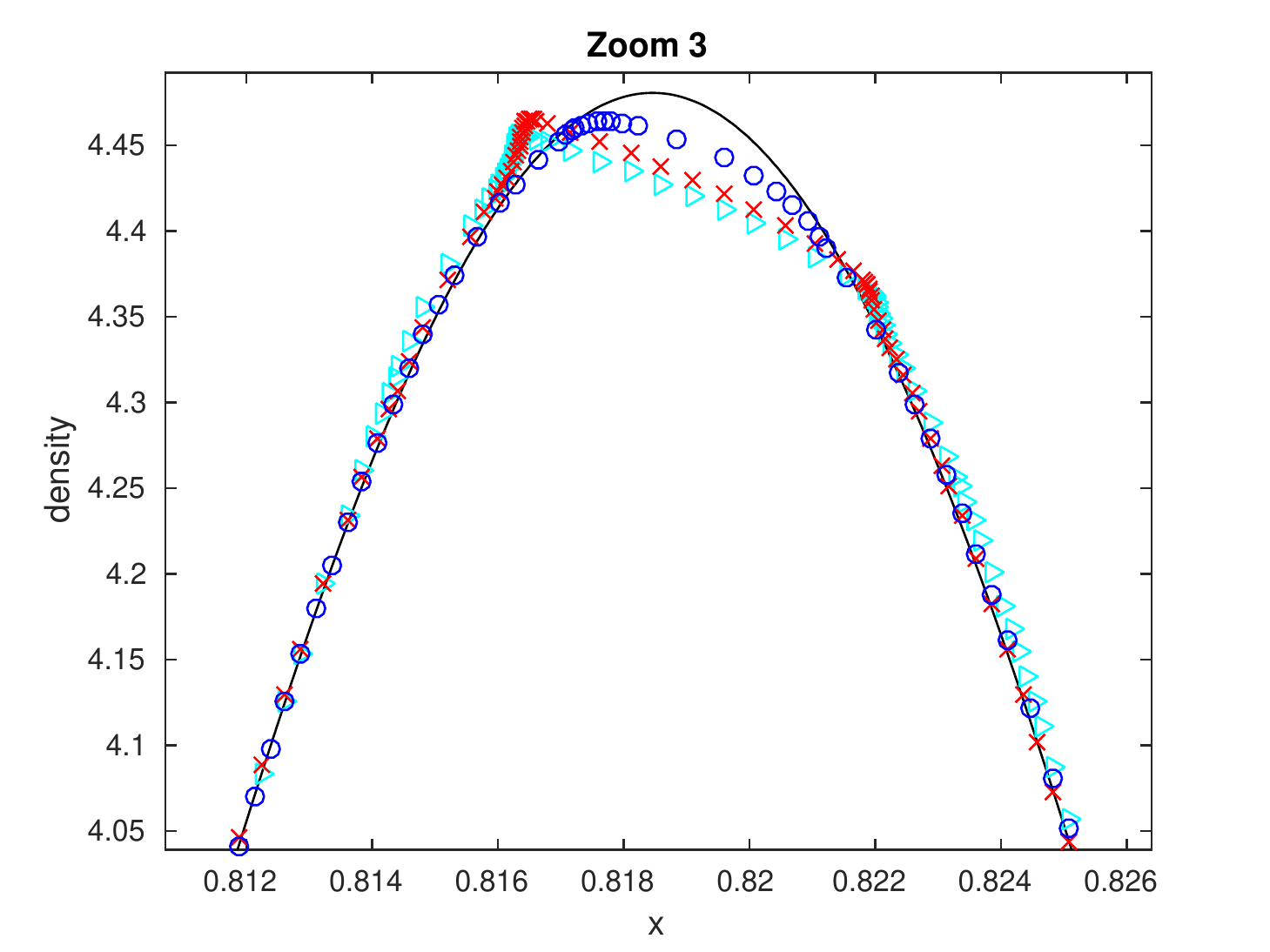}
\end{center}
\caption{Shock-acoustic interaction test. Top-left: efficiency diagram.
Top-right: plot of the density (reference solution). Bottom: comparison of the numerical solutions obtained with different choices of $\epsilon$ in the areas marked with dashed rectangles in the top-right panel.
}
\label{fig:adap:shuosher}
\end{figure}

Finally, we consider the classic problem from \cite{SO88} of the interaction of a shock with a standing acoustic wave. The conservation law is the one-dimensional Euler
equations with $\gamma=1.4$ and the initial data is
\[ 
(\rho,v,p)=\begin{cases}
  (3.857143,2.629369,10.333333),  & x\in[0,0.25],\\
  (1.0 + 0.2\sin(16\pi x), 0.0, 1.0),  & x\in (0.25,1.0].
\end{cases}
\]
The evolution was computed up to $t=0.2$ and the results are presented in Figure \ref{fig:adap:shuosher}. As the right moving shock impinges in the stationary wave, a very complicated smooth structure emerges and then gives rise to small shocks and rarefactions (top right). The evolution was computed with uniform grids and also with the adaptive algorithm, using coarse grids of $32,64,128$ cells, with respectively $6,8,10$ refinement levels. The plot of the error versus the average number of cells employed in each run shows the effectiveness of the h-adaptive procedure and the considerably lower errors obtained when $\epsilon=h$ is employed in the reconstruction procedure (top-left). Three significant portions of the numerical solutions obtained with the h-adaptive algorithm ($128$ coarse cells and $10$ levels) are shown in the lower part of the figure. It is evident that the choice $\epsilon=h$ gives the best results in the approximation of both the high frequency waves and of the little shocks on the left, in particular avoiding the formation of spurious waves, which occour when using $\epsilon=10^{-6}$ or $\epsilon=h^2$.

\paragraph{Third order well-balancing}
We now consider the finite volume discretization of the shallow water equations. It is well known that a second order accurate scheme which is well-balanced for the lake at rest steady state can be constructed with the classical hydrostatic reconstruction technique of \cite{Audusse:2004}. In this scheme, the cell average of the source term is computed as
\[
S_1 = \tfrac12
\left(h_{j-1/2}+h_{j+1/2}\right)
\left(z_{j+1/2}-z_{j-1/2}\right)
\]
where $h_{j\pm1/2}$ and $z_{j\pm1/2}$ are the point values for the water height and the bottom elevation at the boundaries of the $j$-th cell that were employed in the computation of the numerical fluxes. 
A third order accurate scheme may be obtained replacing the original linear, slope-limited, reconstruction with an higher order non-oscillatory one and replacing the original second-order accurate quadrature of the source term with a third-order accurate one. A simple way to obtain such a formula is to apply the Richardson extrapolation technique to $S_1$, as proposed in \cite{NatvigEtAl}. Namely, denoting with $S_2$ the same quadrature rule applied with two sub-intervals, we have that the combination $(4S_2-S_1)/3$ is a fourth-order accurate quadrature for the source term. However, since
\[
S_2 = \tfrac12
\left(h_{j-1/2}+h_{j}\right)
\left(z_{j}-z_{j-1/2}\right)
+ \tfrac12
\left(h_{j}+h_{j+1/2}\right)
\left(z_{j+1/2}-z_{j}\right)
\]
the point-value reconstructions at the cell center must be available as well.

\begin{table}
\begin{center}
\begin{tabular}{l|rrrr}
& $N=100$ & $N=200$& $N=400$& $N=800$\\
\hline
WENO3+P2 & \sn{1.63}{-2} & \sn{1.09}{-2} & \sn{1.51}{-2} & \sn{1.06}{-2}\\
CWENO3 & \sn{6.29}{-4} & \sn{1.27}{-4} & \sn{3.00}{-5} &\sn{6.88}{-6}
\end{tabular}
\end{center}
\caption{Test on the violation of the passivity constraint. Conservation errors of numerical schemes modified by truncating to $0$ all negative water height values.}
\label{tab:cons_err}
\end{table}

As mentioned at the beginning of \S\ref{sec:cweno}, 
WENO3 cannot be applied to compute third order accurate reconstruction at cell centres $h_j$ and $z_j$. Since these values are needed in the well-balanced quadratures, one could use $h_j=\ca{h}_j$ and $z_j=\ca{z}_j$ instead of the third order accurate value, but this would reduce the convergence order to 2 as it can be easily checked on the smooth flow suggested in \cite{XingShu:2005:WBSWEfd}. 
Alternatively, one may reconstruct the point values $h_j$ and $z_j$ by evaluating the central optimal polynomial at the cell center.
Let's call this reconstruction WENO3+P2 and compare it with CWENO3.
Neither CWENO3 nor WENO3+P2 give rise to positive schemes, but the latter causes more oscillations, expecially close to wet-dry fronts.
In order to show the differences, we consider and compare the positive schemes obtained by artificially setting $h_i^{n+1}=0$ whenever $h_i^{n+1}<0$ after each timestep. These schemes are of course not conservative, but their conservation error accumulates over time and gives a measure of the violation of the positivity constraint.
Table \ref{tab:cons_err} shows the conservation errors registered for the simulation of a pond with bottom $z(x)=2x^2$, initial water level $H(x)=z(x)+h(x)=\max( 1+0.4x, z(x) )$ and $q(x)=0$. At time $t=4.0$ the water surface has inverted its movement four times, each time giving rise to spurious waves at the dry points. The scheme based on WENO3+P2 does not show any convergence of the conservation error, indicating that the positivity constraint violations are much severe than those caused by the CWENO3 scheme, which gives rise to a conservation error that decays as $N^{-2}$ with the increase of the number of points. The results in the Table \ref{tab:cons_err} refer to the case of quasi-uniform meshes, but we point out that similar ones were observed on uniform ones.

\subsection{Two-dimensional tests}
A generalisation of the CWENO reconstruction to locally-refined quad-tree grids was described in \cite{CRS}. Here we present a comparison of the different choices for $\epsilon$ in this setting.

The reconstruction presented in \cite{CRS} is a truly two-dimensional reconstruction that generalises the one of \cite{LPR:2001} to non-uniform meshes. In brief, for the cell $\Omega_j$ one considers as ``optimal'' second order polynomial $P_j^{\text{OPT}}(x,y)$ that interpolates the cell average in $\Omega_j$ exactly and all the cell averages in the neighbours in a least-squares sense (this approach is of course needed since in a two-dimensional h-adapted grid the number of neighbours is variable). Additionally the reconstruction considers four ``directional planes'', i.e. first order polynomials that fit exactly the cell average in $\Omega_j$ and, in a least squares sense, the cell averages of the neighbours in the north-east, north-west, south-west and south-east sector. For the precise definition of the neighbourhoods and stencils, see \cite{CRS}. Then a second order polynomial $P_j^{\text{C}}(x,y)$ is defined by requiring that 
\[
P_j^{\text{OPT}}(x,y)=
\tfrac12P_j^{\text{C}}(x,y) 
+\tfrac18P_j^{\text{NE}}(x,y) 
+\tfrac18P_j^{\text{NW}}(x,y) 
+\tfrac18P_j^{\text{SW}}(x,y) 
+\tfrac18P_j^{\text{SE}}(x,y) ,
\]
non-linear weights are computed from the optimal weights $C_\text{C}=\tfrac12$ and $C_\text{NE}=C_\text{NW}=C_\text{SW}=C_\text{SE}=\tfrac18$ with the help of the Jiang-Shu indicators and a reconstruction polynomial is defined by
\[
P_j^{\text{REC}}(x,y)=
\omega_{\text{C}} P_j^{\text{C}}(x,y) 
+\omega_{\text{NE}} P_j^{\text{NE}}(x,y) 
+\omega_{\text{NW}} P_j^{\text{NW}}(x,y) 
+\omega_{\text{SW}} P_j^{\text{SW}}(x,y) 
+\omega_{\text{SE}} P_j^{\text{SE}}(x,y).
\]
This latter polynomial is uniformly third order accurate in the cell (for smooth data) and it can be evaluated where needed. For non-smooth data, the procedure of computing the nonlinear weights from the linear one by using the regularity indicators, ensures that data from non-smooth sub-stencils are not relevant for the coefficients of $P_j^{\text{REC}}$ and the reconstruction is non-oscillatory. For more tests, see \cite{CRS}.

In this paper we want to test the influence of the choice of $\epsilon$ in the accuracy of the reconstruction for smooth data. We remark that the role of $\epsilon$ is quite important in h-adapted meshes, since the grid includes cells of very different size and it is thus quite difficult to choose a fixed value of $\epsilon$ that works well on every cell. 

\begin{table}
\begin{center}
\includegraphics[width=0.5\textwidth]{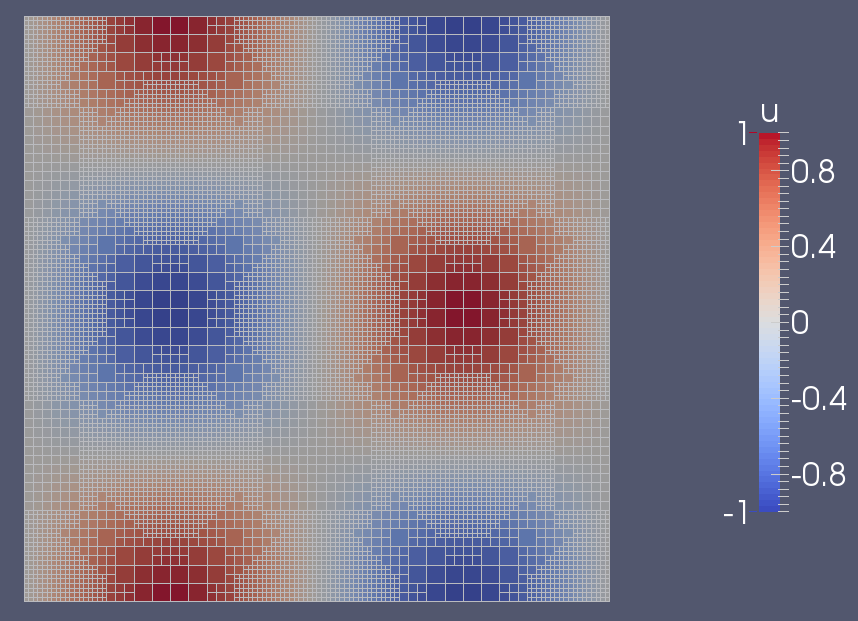}

\begin{tabular}{|lr|ll|ll|ll|}
\hline
&$N$ & \multicolumn{2}{c|}{$\epsilon(h)=10^{-6}$} & \multicolumn{2}{c|}{$\epsilon(h)=h^2$}& \multicolumn{2}{c|}{$\epsilon(h)=h$}\\
&& error & rate & error & rate& error & rate\\
\hline 
$\mathcal{G}_0$ &  1.03e4 &  1.82e-02 &       &  1.30e-02 &       &  2.94e-03 &      \\
$\mathcal{G}_1$ &  4.13e4 &  4.59e-03 &  1.98 &  1.93e-03 &  2.75 &  2.03e-04 &  3.86 \\
$\mathcal{G}_2$ &  1.65e5 &  1.10e-03 &  2.06 &  2.16e-04 &  3.16 &  2.31e-05 &  3.13\\
$\mathcal{G}_3$ &  6.62e5 &  1.20e-04 &  3.19 &  2.04e-05 &  3.40 &  2.91e-06 &  2.99 \\
$\mathcal{G}_4$ &  2.65e6 &  4.97e-06 &  4.59 &  2.08e-06 &  3.30 &  3.65e-07 &  3.00\\
\hline
\end{tabular}

\end{center}

\caption{Maximum norm errors for the two-dimensional CWENO reconstruction on h-adapted grids. The error decay rates are computed with respect to $\sqrt{N}$, where $N$ is the number of cells in the grid. (Due to the choice of grids $\mathcal{G}_k$, $N$ scales as the inverse of the size of the smallest cell in the grid). The grid $\mathcal{G}_0$ and the function $u$ is depicted on top.}
\label{tab:CWENO2d}
\end{table}

To this end we consider the function $u(x,y)=\sin(2\pi x)\cos(2\pi y)$ on the unit square with periodic boundary conditions on a uniform coarse grid. The grid is locally refined with a simple gradient indicator, obtaining the locally adapted grid $\mathcal{G}_0$ depicted in Table \ref{tab:CWENO2d}. Finer grids $\mathcal{G}_k$ are then obtained from this one by splitting each cell of $\mathcal{G}_0$ into $4^k$ equal parts. For each grid, the cell averages of $u$ are computed with a fifth-order gaussian quadrature rule and then the reconstruction of boundary extrapolated data is performed using the two-dimensional CWENO reconstruction. The reconstruction polynomials are computed in each cell and evaluated at the quadrature nodes of the third-order gaussian quadrature, which would be the quadrature of choice for computing fluxes of a finite-volume scheme on such grids. These values are then compared with the exact values of the function $u$ and the maximum-norm error is reported in Table \ref{tab:CWENO2d}.

We observe that the choice $\epsilon(h)=h$ gives the best results both in terms of error decay rates and in terms of absolute values of the error. In fact, the last column of Table \ref{tab:CWENO2d} consistently records lower errors than the other two and has a more regular behaviour of the decay rates.

\section{Conclusions}\label{sec:conclusions}
\cite{Arandiga} and \cite{Kolb2014} discussed the advantage of choosing the parameter $\epsilon$ in WENO and, respectively, CWENO boundary value reconstruction procedures.
In this work we extended their results to non-uniform meshes and concentrate on finite volume schemes, whereas \cite{Arandiga} worked in the finite differences approach.

Our work shows that, also for a non-uniform grid, choosing $\epsilon$ as a function of the local mesh size (i.e. $\epsilon=\epsilon(h_j)$ for the reconstruction of $u^+_{j-1/2}$ and $u^-_{j+1/2}$) allows one to recover the optimal error of convergence even close to local extrema of the function being reconstructed and in general provides a much more regular pattern of error decay, which is a quite important feature for the good performance of error indicators.

We compared the choices $\epsilon=h$ and $\epsilon=h^2$, showing that both can yield the aforementioned improvements and that the former is slightly better on smooth tests, while the latter gives a slightly lower total variation increase in discontinuous cases.

Furthermore, we compared the performance of the different choices for $\epsilon$ in an h-adaptive context, with third order scheme that employs the CWENO3 reconstruction and, at each timestep, locally refines and coarsens the grid according to the numerical entropy production error indicator. We observed that reconstructing in each cell $\Omega_j$ with $\epsilon_j=h_j$ (where $h_j$ here is the local cell size) yields the best results also in the shocked cases.
The paper is completed by a test on the performance of the CWENO3 reconstruction for the shallow water equation with dry states.

Finally, we point out that $\epsilon_j=h_j$ has been already successfully employed in the h-adaptive scheme for conservation laws in two space dimensions based on a two-dimensional generalisation of the CWENO3 reconstruction on locally adapted meshes presented in \cite{CRS} and that more testing on the comparison of different choices for $\epsilon$ is presented here. Since that reconstruction is based on local approximation and not on interpolation, the analysis of that more general situation goes beyond the scope of this paper, but constitutes an interesting line of research.



\section*{Conflict of Interest}
The authors declare that they have no conflict of interest.

\end{document}